\newtheorem{thm}{Theorem}
\newtheorem{cor}{Corollary}
\newtheorem{prop}{Proposition}
\newtheorem{lem}[thm]{Lemma}
\newdefinition{rem}{Remark}
\newdefinition{defi}{Definition}
\newproof{pf}{Proof}
\newcolumntype{M}[1]{>{\raggedright}m{#1}}
\journal{*****}
\begin{document}

\begin{frontmatter}

\title{Integrability and non integrability of some $n$ body problems}
\author{Thierry COMBOT\fnref{label2}}
\ead{thierry.combot@u-bourgogne.fr}
\address{IMB, Universi\'e de Bourgogne, 9 avenue Alain Savary, 21078 Dijon Cedex }

\title{Integrability and non integrability of some $n$ body problems}

\author{}

\address{}

\begin{abstract}
We prove the non integrability of the colinear $3$ and $4$ body problem, for any masses positive masses. To deal with resistant cases, we present strong integrability criterions for $3$ dimensional homogeneous potentials of degree $-1$, and prove that such cases cannot appear in the $4$ body problem. Following the same strategy, we present a simple proof of non integrability for the planar $n$ body problem. Eventually, we present some integrable cases of the $n$ body problem restricted to some invariant vector spaces.
\end{abstract}

\begin{keyword}
Morales-Ramis theory\sep Homogeneous potential \sep Central configurations \sep Differential Galois theory \sep Integrable systems


\end{keyword}

\end{frontmatter}

\section{Introduction}

In this article, we will consider the $n$ body problem whose Hamiltonian is given by
$$H_{n,d}=T_{n,d}(p)+V_{n,d}(q)=\sum\limits_{i=1}^n \frac{\lVert p_i \lVert^2}{2m_i} + \sum\limits_{1\leq i <j \leq n} \frac{m_im_j} {\lVert q_i-q_j \lVert}$$
The quadratic form $T$ correspond to kinetic energy, $V$ is the potential, which is a homogeneous function of degree $-1$ in $q$. The coordinates $q_1,\dots,q_n$ correspond respectively to the coordinates of the bodies $m_1,\dots,m_n$.

Already since Poincare and Bruns \cite{46,46b}, it is known that the $n$-body problem is for $n\geq 3$ not integrable in general. Bruns in \cite{46} proved the non-existence of additional algebraic first integrals, later generalized by Julliard-Tosel \cite{47}, and more recent work like \cite{5,68,69} prove the meromorphic non-integrability or non existence of meromorphic first integrals in some cases. All these proofs strongly suggest that the $n$-body problem is never integrable for $n\geq 3$, even in particular cases (as proven for example for the isosceles $3$-body problem in \cite{115}).  The colinear problem (in dimension $1$) is a priori more difficult than the non-integrability proof of the $n$ body problem in the plane and higher dimension, because it needs fewer additional first integrals to be integrable. Remind that as the energy and the impulsion of the center of mass are first integrals, in dimension $1$ we only need $n-2$ additional first integrals for integrability. We will see that even if the problem is not so easy as the planar case, it can be completely studied in the case $n=3,4$ through the bounding of eigenvalues of the Hessian of $V$ at central configurations (see Definition \ref{defcentral}). A similar trick allows to obtain a simple proof of the non integrability of the planar case with positive masses. In the opposite direction, the $n$-body problem also possesses explicit algebraic orbits, linked to central configurations \cite{93}. Restricting the $n$-body problem to a vector space associated to a central configuration leads in particular to an integrable problem, although very simple.
Still, as we will see, there are also not so trivial invariant vector spaces of the $n$-body problem on which the potential is integrable.

In the integrability analysis of the $n$ body problem, and in the more general case of homogeneous potential, the notion of central configuration/Darboux point plays a key role

\begin{defi}\label{defcentral}
We consider the potential $V_{n,d}$ of the $n$ body problem. We will say that $c\in\mathbb{C}^{nd}$ is a central configuration if there exists $g\in\mathbb{C}^d,\alpha \in\mathbb{C}$ such that
$$\frac{\partial}{\partial q_i} V(c_1-g,\dots,c_n-g)= \alpha (c_1-g,\dots,c_n-g) \qquad i=1\dots n$$
The scalar $\alpha$ is called the multiplier. We say that the central configuration is proper if $\alpha\neq 0$ (else called an absolute equilibrium). In the more general setting of $V$ a homogeneous potential of degree $-1$, we call $c$ a Darboux point if moreover $g=0$.
\end{defi}

We add this constant $g$ in our Definition for the $n$ body problem as the potential is in this case invariant by translation, and thus we do not (always) want to require that the center of mass be at $0$. Our non-integrability proofs will be based on variational equations of the corresponding differential system near these central configurations. The main theorem behind these non-integrability proofs is the following

\begin{thm}\label{thmmorales} (Morales, Ramis, Simo \cite{2}) Let $V$ be a meromorphic homogeneous potential of degree $-1$ and $c$ a Darboux point. If $V$ is meromorphically integrable, then the identity component of the Galois group of the variational equation near the homothetic orbit associated to $c$ is abelian at any order. Moreover, the identity component of the Galois group of the first order variational equation is abelian if and only if
$$Sp(\nabla^2 V(c))\subset \left\lbrace\textstyle{\frac{1}{2}}(k-1)(k+2),\; k\in\mathbb{N} \right\rbrace$$
\end{thm}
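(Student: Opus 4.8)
The statement splits into two logically independent parts, and I would treat them separately. The first assertion --- that meromorphic integrability forces the identity component $G^0$ of the Galois group of every variational equation along the homothetic orbit to be abelian --- is the general Morales--Ramis--Simó obstruction, which I would take as the abstract input: a complete involutive set of meromorphic first integrals restricts, order by order, to enough rational first integrals of the successive variational systems to force $G^0$ to be abelian at each order. The substantive content is the ``Moreover'' clause, namely the explicit spectral criterion for the \emph{first}-order equation, which is a self-contained statement about a linear ODE and makes no reference to integrability; this is what I would actually prove.

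To do so I would first write down the homothetic orbit. Since a Darboux point can be rescaled (if $c$ has multiplier $\alpha$ then $sc$ has multiplier $\alpha s^{-3}$), I normalize so that $\nabla V(c)=-c$; the spectrum in the conclusion is then understood at this normalized point. The curve $q(t)=\varphi(t)\,c$ solves Hamilton's equations provided $\varphi$ satisfies the scalar Kepler-type equation $\ddot\varphi=\varphi^{-2}$, which carries the energy first integral $\tfrac12\dot\varphi^2+\varphi^{-1}=E$. Linearizing $q=\varphi c+\eta$ and using that $\nabla^2 V$ is homogeneous of degree $-3$, so that $\nabla^2 V(\varphi c)=\varphi^{-3}\nabla^2 V(c)$, the first-order variational equation becomes $\ddot\eta=-\varphi^{-3}\,\nabla^2 V(c)\,\eta$ with a \emph{constant} matrix $\nabla^2 V(c)$.

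Next I would diagonalize. Euler's relation applied to $\nabla V$ gives $\nabla^2 V(c)\,c=2c$, so $c$ itself is an eigenvector; its block is the tangential/energy direction, contributes only the (abelian) variational dynamics of $\varphi$, and imposes no condition --- consistently, $2=\tfrac12(2-1)(2+2)$ already lies in the target set. In every other eigendirection, with eigenvalue $\lambda$, the system decouples into the scalar equation $\ddot\eta=-\lambda\,\varphi^{-3}\eta$. Changing the independent variable from $t$ to a suitable rational function $x$ of $\varphi$ (normalized through the energy relation so the singular points sit at $0,1,\infty$) transforms each such equation into a Gauss hypergeometric equation whose local exponent differences at the three singularities depend affinely on $\lambda$; computing those exponents is the routine algebraic core.

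Finally I would invoke Kimura's theorem together with the Schwarz list to read off exactly when $G^0$ of a hypergeometric equation is abelian in terms of the exponent differences, intersecting the classification with the condition that $G^0$ be not merely solvable but abelian (so the Borel and full $SL_2$ cases are excluded, leaving the finite, unipotent, and diagonal cases). Feeding in the dependence on $\lambda$ and isolating the admissible rows yields the equivalence with $\lambda=\tfrac12(k-1)(k+2)$, $k\in\mathbb N$, in both directions. I expect the main obstacle to be precisely this last step: one must carry out the case analysis of Kimura's table with care, verify that no spurious dihedral or otherwise non-abelian rows survive, and check that the two families of values produced by the $\pm$ sign ambiguity of the exponent at infinity collapse onto the single stated arithmetic progression.
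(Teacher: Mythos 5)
This theorem is not proved in the paper at all: it is quoted verbatim as a result of Morales, Ramis and Sim\'o (reference \cite{2}), so there is no internal proof to compare against. Your sketch reconstructs the standard argument behind the cited result --- take the higher-order obstruction as the abstract Morales--Ramis--Sim\'o input, write the homothetic orbit, decouple the first-order variational equation along eigendirections of $\nabla^2 V(c)$ into scalar equations $\ddot\xi=-\lambda\varphi^{-3}\xi$, transform to a Gauss hypergeometric equation, and apply Kimura's classification --- and in outline this is exactly the proof in the literature that the paper is relying on.

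Two caveats on the execution. First, you silently assume $\nabla^2V(c)$ is diagonalizable (``Next I would diagonalize''); if the (complex symmetric) Hessian has a nontrivial Jordan block, the variational equation only block-triangularizes and the stated ``if and only if'' can genuinely fail even when the spectrum lies in the allowed set, so this hypothesis must be made explicit --- it is implicit in the paper's formulation as well. Second, your plan to rule out ``spurious dihedral rows'' as non-abelian is backwards: the infinite dihedral group has identity component a torus, which \emph{is} abelian, so dihedral cases are admissible and are precisely the source of extra eigenvalue families for other degrees of homogeneity. Also, the exponent differences are not affine in $\lambda$: for degree $-1$ they come out as $\bigl(1,\tfrac12,\tfrac12\sqrt{8\lambda+9}\bigr)$, so the dependence is through a square root. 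What saves your conclusion is the arithmetic specific to degree $-1$: the dihedral condition forces $\tfrac12\sqrt{8\lambda+9}\in\tfrac12+\mathbb{Z}$, which is the same condition as Kimura's reducible case (i), namely $\lambda=\tfrac12(k-1)(k+2)$; and for those values one exhibits an algebraic (Gegenbauer-type) solution, so the group lies in a triangular subgroup with finite diagonal part and the identity component is unipotent or trivial, hence abelian, never a full Borel. With those repairs the sketch is a faithful account of the cited proof.
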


Remark also that in dimension $1$, $V_{n,1}$ is a rational potential (thus univaluated on $\mathbb{C}^n$), but is not in higher dimension. In the complex domain, the potential $V_{n,d},\; d\geq 2$ is properly defined on an algebraic variety $\mathcal{S}$. An extension of Theorem \ref{thmmorales} has been done in \cite{91}, and proves that in the $n$ body problem, the necessary condition for integrability on the Galois group of variational equations still holds.

Such a Theorem can be either used for each central configuration separately, or simultaneously using some algebraic properties. In the case of the $n$ body problem, a direct computation of central configurations is often too difficult. The colinear case with $n=3,4$ is still tractable, and we prove moreover that a complete computation of central configurations is not necessary, only majorations on eigenvalues of the Hessian matrix of $V_{n,1}$ at Darboux points is necessary.

Using a real algebraic geometry software RAGlib \cite{74}, we prove such a majoration for $n=3,4$ and we conjecture that a similar majoration always hold for any $n$. We then prove very strong non-integrability Theorem that rules out any potential which satisfies these bounds. In the planar case, we also prove a similar majoration, which holds moreover for any $n$. This allows to prove the non-integrability of the planar $n$-body problem. The main theorems of this article are the following

\begin{thm}\label{thmmain1}
For any $(m_1,m_2,m_3)\in{\mathbb{R}_+^*}^3$, the potential $V_{3,1}$ is not meromorphically integrable. Moreover, if $m_1+m_2+m_3=1$, the variational equations near the unique real central configuration have an Abelian Galois group (over the base field $\mathbb{C}(t)$) up to an order
\begin{itemize}
\item greater than $1$ if and only if it exists $\rho\in\mathbb{R}_+^*$ and $k\in\{5,9,14\}$ such that
\begin{small}
\begin{align*}
m_1 & =\frac{(\rho+1)(-8\rho^5+k\rho^5-12\rho^4+3k\rho^4-8\rho^3+3k\rho^3+3k\rho^2+3k\rho+k)}{k(1+2\rho^3+\rho^4+2\rho+\rho^2)^2}\\
m_2 & =-\frac{(-8\rho^4+k\rho^4-28\rho^3+2k\rho^3+k\rho^2-40\rho^2-28\rho+2k\rho-8+k)\rho^2}{k(1+2\rho^3+\rho^4+2\rho+\rho^2)^2} \quad (E_k)\\
m_3 & =\frac{(\rho+1)(k\rho^5+3k\rho^4+3k\rho^3-8\rho^2+3k\rho^2-12\rho+3k\rho-8+k)\rho^2}{k(1+2\rho^3+\rho^4+2\rho+\rho^2)^2}
\end{align*}
\end{small}
\item equal to $2$ if and only if moreover $m_1=m_3$ or $(m_1,m_2,m_3)\in E_9$.
\end{itemize}
\end{thm}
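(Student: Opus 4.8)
The plan is to apply Theorem \ref{thmmorales} to the colinear three-body problem. The strategy splits into two parts: first establish non-integrability for all positive masses, then characterize precisely the masses for which the variational equation's Galois group stays abelian up to higher order. The key computational fact is that the colinear 3-body problem (in dimension $d=1$) has, up to the symmetries of scaling and translation, a finite number of central configurations, classically parametrized by a single ratio $\rho\in\mathbb{R}_+^*$ giving the relative spacing of the three masses on the line. So I would begin by normalizing: fix $m_1+m_2+m_3=1$, place the center of mass at $0$, and parametrize the colinear configuration by $\rho$, writing the position differences in terms of $\rho$. The Euler collinear relation then expresses the masses $(m_1,m_2,m_3)$ as explicit rational functions of $\rho$ once the multiplier $\alpha$ is fixed by the homogeneity normalization.

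\textbf{Next I would compute the Hessian spectrum.} At a Darboux point $c$ the Hessian $\nabla^2 V_{3,1}(c)$ is a $3\times 3$ symmetric matrix, but because $V_{3,1}$ is translation-invariant it is degenerate along the direction $(1,1,1)$, and the homothety direction $c$ is always an eigenvector with the universal eigenvalue $-1 = \frac{1}{2}(k-1)(k+2)$ at $k=0$ (this is forced by Euler's identity for homogeneous functions of degree $-1$). Hence only \emph{one} nontrivial eigenvalue $\lambda(\rho)$ remains, and I would compute it as an explicit rational function of $\rho$ (using the mass expressions above to clear the masses). Theorem \ref{thmmorales} then says integrability forces $\lambda(\rho)\in\{\frac{1}{2}(k-1)(k+2): k\in\mathbb{N}\} = \{-1,0,2,5,9,14,\dots\}$. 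The first part of the theorem — non-integrability for \emph{all} positive masses — follows by showing that the map $\rho\mapsto\lambda(\rho)$, on the range of $\rho$ giving positive masses, either avoids this discrete set entirely at generic masses, or, at the isolated $\rho$ where $\lambda(\rho)$ does hit an admissible value, the \emph{higher-order} variational equations still fail the abelianity test; one invokes the ``abelian at any order'' clause of Theorem \ref{thmmorales} to exclude these finitely many resistant configurations.

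\textbf{For the explicit characterization} I would solve $\lambda(\rho)=\frac{1}{2}(k-1)(k+2)$ for each admissible $k$. Clearing denominators turns this into a polynomial equation in $\rho$ whose coefficients are linear in $k$; substituting back into the mass formulas yields exactly the families $(E_k)$. The values $k\in\{5,9,14\}$ (i.e. eigenvalues $9,35,77$) are singled out as the only ones attainable with $\rho\in\mathbb{R}_+^*$ and all masses positive; the smaller values $k=2,3,4$ (eigenvalues $2,5,9$ — note $k=4$ also gives $9$, so I must check both branches) either force a negative mass or a coincident/degenerate configuration, so I would verify the positivity and reality constraints carefully to land exactly on the stated three families. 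For the final bullet (order exactly $2$), I would compute the second-order variational equation along the homothetic orbit and test abelianity of its Galois group: the higher-order obstruction typically vanishes precisely under an extra symmetry, which here is the reflection symmetry $m_1=m_3$ of the configuration, plus the sporadic family $E_9$ where an arithmetic coincidence in the $k=9$ hypergeometric monodromy kills the second-order obstruction.

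\textbf{The hard part} will be the higher-order analysis for the resistant cases. The first-order spectral test is a finite rational-function computation, entirely mechanical (and is exactly where RAGlib \cite{74} is used to certify positivity constraints). But ruling out integrability when the eigenvalue genuinely lands in the admissible set, and distinguishing order-$2$ abelianity from order-$\geq 3$ abelianity, requires explicitly building the second (and possibly third) variational equation, recognizing its solutions in terms of the hypergeometric functions attached to the degree-$-1$ homothetic orbit, and computing commutators in the associated Galois group. Identifying why $m_1=m_3$ and the single family $E_9$ are the exact locus where the order-$2$ obstruction vanishes — while $E_5$ and $E_{14}$ remain non-abelian at order $2$ — is the genuinely delicate step, since it hinges on a specific resonance between the eigenvalue and the monodromy representation rather than on any routine computation.
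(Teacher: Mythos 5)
Your overall strategy (Morales--Ramis at the real central configuration, an eigenvalue analysis, then higher variational equations for the resistant cases) is indeed the paper's strategy, but two essential pieces of your plan are wrong or missing. First, a dimension count: with $m_1+m_2+m_3=1$ fixed, the Euler relation is a single equation, linear in the three masses, so for each $\rho$ the masses that make $(-1,0,\rho)$ a central configuration form a \emph{one-dimensional affine family} (Proposition \ref{thm0} parametrizes it by $s$); they are not rational functions of $\rho$ alone. Consequently the nontrivial Hessian eigenvalue is a function $G(s,\rho)$ of two variables, and your map $\rho\mapsto\lambda(\rho)$ only sees a one-dimensional slice of the two-dimensional space of normalized masses, so the first claim (``for any positive masses'') would not be established. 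The statement itself signals this: the exceptional loci $E_k$ are one-parameter curves of masses, whereas under your setup solving $\lambda(\rho)=\mathrm{const}$ would produce only isolated mass triples. Two smaller slips of the same kind: the homothety direction $c$ carries the eigenvalue $2$, not $-1$, since $\nabla^2V(c)\,c=-2\nabla V(c)=2c$ when the multiplier is $-1$; and in the theorem $k\in\{5,9,14\}$ \emph{is} the eigenvalue itself, i.e.\ $\frac{1}{2}(i-1)(i+2)$ for $i=3,4,5$ --- your parenthetical ``eigenvalues $9,35,77$'' is incorrect, and substituting those values into the formulas for $E_k$ would yield the wrong families.

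Second, and more fundamentally, you have no mechanism for excluding all but finitely many admissible eigenvalues. The Morales--Ramis set $\left\lbrace\frac{1}{2}(i-1)(i+2),\; i\in\mathbb{N}\right\rbrace$ is infinite; you discuss discarding the small values by positivity, but nothing in your plan rules out arbitrarily large admissible values, and ``solve $\lambda=\frac{1}{2}(i-1)(i+2)$ for each $i$ and check positivity'' is not a finite procedure. The paper's key step is a \emph{bound}: for positive masses one may assume $\rho\geq 1$ (reversing the order of the masses exchanges $\rho$ and $1/\rho$), the admissible $s$ lie in an explicit interval $I(\rho)$ containing no pole of $G(\cdot,\rho)$, $G(\cdot,\rho)$ is monotone on $I(\rho)$, the image interval $G(I(\rho),\rho)$ shrinks as $\rho$ grows, and $G(I(1),1)=\,]2,16[$; hence the eigenvalue always lies in $]2,16[$ and only $5,9,14$ survive. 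Without this bounding argument the reduction to the three curves $E_5,E_9,E_{14}$ --- and with it the whole theorem --- does not follow. The remainder of your plan (order-$2$ conditions given by vanishing of third-order derivatives, which for $E_5,E_{14}$ force $\rho=1$, i.e.\ $m_1=m_3$, with no order-$2$ condition on $E_9$, and failure at order $3$ everywhere) matches what the paper carries out, so once the two gaps above are repaired the argument would go through along your lines.
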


\begin{thm}\label{thmmain2}
For any $m_1,m_2,m_3,m_4>0,\;\;m_1+m_2+m_3+m_4=1$, the potential $V_{4,1}$ is not integrable. Moreover, near the unique real central configuration, there are at most $14$ one dimensional irreductible algebraic curves in the space of masses for which the variational equations have virtually Abelian Galois groups at least up to order $1$. At least one of them, and at most $10$ of them correspond to masses for which the second order variational equations have a virtually Abelian Galois group. None of them have variational equation whose Galois group is virtually Abelian at order $5$.
\end{thm}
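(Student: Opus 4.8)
The plan is to apply the Morales--Ramis--Sim\'o obstruction of Theorem \ref{thmmorales} at the unique collinear central configuration, and then to upgrade it to higher order exactly on the masses that survive the first order test. First I would fix the geometry: by Moulton's theorem the collinear $4$-body problem has, for each prescribed ordering of the bodies on the line, a single central configuration up to translation and scaling, so after placing the center of mass at $0$ and normalizing the multiplier to $\alpha=-1$ (that is, $\nabla V_{4,1}(c)=-c$) one obtains a well-defined algebraic ``unique real central configuration'' $c=c(m)$ by solving the central configuration equations. I would then examine the symmetric Hessian $\nabla^2 V_{4,1}(c)$. Two of its four eigenvalues are forced: the translation vector $(1,1,1,1)$ lies in its kernel (eigenvalue $0$), and Euler's identity for a degree $-1$ potential gives $\nabla^2 V_{4,1}(c)\,c=-2\,\nabla V_{4,1}(c)=2c$ (eigenvalue $2$, tangent to the homothetic orbit). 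Everything is therefore controlled by the two remaining eigenvalues $\lambda_1(m),\lambda_2(m)$, the roots of an explicit quadratic with coefficients rational in $c$ and $m$.

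The second step is to bound these eigenvalues. Eliminating $c$ through the central configuration equations turns the condition $\lambda_i\in Sp(\nabla^2V_{4,1}(c))$ into a semialgebraic condition in $m$, and running RAGlib on the positive-mass cell should yield a majoration confining both $\lambda_1$ and $\lambda_2$ to a bounded range in which the only values of the Morales--Ramis--Sim\'o spectrum $\{\tfrac12(k-1)(k+2):k\in\mathbb{N}\}$ that occur are $5$, $9$ and $14$. Granting this, the first order criterion of Theorem \ref{thmmorales} reads $\lambda_1,\lambda_2\in\{5,9,14\}$, a zero-dimensional constraint on the pair of eigenvalues; back-substituting each admissible value pair cuts out an algebraic curve in the three-dimensional mass space, and a B\'ezout-type bound on the number of real irreducible components meeting the positive orthant produces the stated ceiling of $14$. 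For every mass off these curves some $\lambda_i$ escapes the spectrum, so the first variational equation already has Galois group that is not virtually Abelian and integrability fails there.

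It remains to dispose of the finitely many surviving curves, which is where the real work lies. Along the homothetic orbit $q(t)=\phi(t)\,c$ the first variational equation diagonalizes in the eigenbasis of $\nabla^2V_{4,1}(c)$ into decoupled scalar equations, each reducible to a Gauss hypergeometric equation whose parameters are fixed by the corresponding $\lambda_i$; the higher variational equations $\mathrm{VE}_k$ are then linear systems over the Picard--Vessiot field of $\mathrm{VE}_1$, which I would assemble and test order by order. A refined RAGlib computation restricted to each curve detects the additional order $2$ resonance, showing it is met on at most $10$ of the curves and, through the symmetric subfamilies (the analogue of the locus $m_1=m_3$ that appears for $n=3$ in Theorem \ref{thmmain1}), on at least one of them. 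The decisive point is to certify that the order $5$ equation is never virtually Abelian along any of these curves: this kills integrability on the whole surviving locus and, combined with the first order elimination, proves non-integrability for all positive masses.

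\textbf{The main obstacle} is precisely this higher-order analysis. Assembling $\mathrm{VE}_5$ over a hypergeometric Picard--Vessiot extension and computing the identity component of its Galois group is a substantial symbolic task, and one must moreover ensure that no degeneration of the central configuration --- coalescing masses, or a coincidence $\lambda_1=\lambda_2$ destroying the diagonalization --- produces a spurious resonance or breaks the reduction; the supporting real-algebraic certifications, namely the eigenvalue majoration and the component counts, are themselves heavy enough that their reliability must be argued with care.
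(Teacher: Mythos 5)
Your overall architecture (Morales--Ramis at the unique Moulton configuration, RAGlib certification, then higher variational equations on the surviving locus) matches the paper, but your second step contains a concrete error that breaks the whole count. You claim the majoration confines $\lambda_1,\lambda_2$ to $\{5,9,14\}$: that is the answer for the \emph{three}-body problem (Theorem \ref{thmmain1}), not the four-body one, and it is inconsistent with the very statement you are proving --- eigenvalues in $\{5,9,14\}$ would give at most $6$ unordered pairs, not $14$ curves. The paper's actual route is different and this difference is essential: because $\lambda_1,\lambda_2$ are roots of a quadratic with complicated coefficients, it does not bound them individually but bounds the \emph{trace}, exploiting that $\mathrm{tr}(W)$ is affine in $m_3$ once the central configuration is parametrized as $c=(-\rho_1,-1,1,\rho_2)$, so its maximum over the positive-mass set is attained on the boundary; RAGlib then certifies $\mathrm{tr}(W)<70$, i.e. $\lambda_1+\lambda_2<68$. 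Combined with Pacella's theorem (Theorem \ref{thmpac}), which gives $\lambda_1,\lambda_2>2$, and the Morales--Ramis spectrum, this yields $26$ candidate pairs; a second RAGlib elimination (solving $\mathrm{tr}(W)=2+\lambda_1+\lambda_2$ for $m_3$ and showing the determinant equation $Z_0=0$ has no real solutions in twelve cases) cuts this to the $14$ pairs of the theorem, which include values as large as $\{5,54\}$, $\{9,54\}$ and $\{14,44\}$. Your ``B\'ezout-type bound'' does not substitute for this: the number $14$ is the number of admissible eigenvalue pairs after elimination, not a degree bound on components.

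Two further steps are misattributed or would not go through as you describe. First, the ``at least one'' clause does not come from symmetric subfamilies analogous to $m_1=m_3$; it comes from the pair $\{9,20\}$, for which the table of integrability conditions at order $2$ is \emph{empty}, so the second order variational equation is automatically virtually Abelian on that curve. Second, for orders $3$ to $5$ the paper never assembles the variational equations of $V_{4,1}$ along each curve in mass space (the task you correctly flag as your main obstacle). Instead it proves a universal statement: after reduction, $V_{4,1}$ is a real $3$-dimensional homogeneous potential of degree $-1$, and for each surviving spectrum the paper classifies all integrable series expansions at a Darboux point (the ideals $\mathcal{I}_k$ of Appendix A). All pairs except $\{9,20\}$ are then killed at order $5$ without any new Galois computation, by Lemma \ref{leminv}: the vanishing $u_{3,1}=u_{4,1}=u_{5,1}=0$ forced at order $4$ makes the plane $q_3=0$ invariant inside the fifth order variational equation, whose restriction is the fifth order variational equation of a \emph{planar} potential with eigenvalue $\lambda_1\in\{5,9,14,20\}$, already known to be obstructed by the classification of \cite{10}. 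Only $\{9,20\}$ requires the full order-$5$ computation (yielding $\mathcal{I}_5=\langle 1\rangle$). Without the trace bound, Pacella's minoration, and this invariant-subspace reduction, your plan cannot produce the numbers $14$, $10$, and the order-$5$ conclusion.
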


\begin{thm}\label{thmmain3}
For any $n$-uplet of positive masses, the planar $n$ body problem is not meromorphically integrable.
\end{thm}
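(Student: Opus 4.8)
The plan is to prove non-integrability of the planar $n$-body problem by exhibiting, for any choice of positive masses, a single central configuration whose Hessian spectrum violates the necessary condition of Theorem~\ref{thmmorales}. Following the strategy already announced in the introduction, I would not attempt to classify all central configurations; instead, I would seek a universal eigenvalue estimate holding at \emph{every} planar central configuration, so that the spectrum of $\nabla^2 V(c)$ can never be contained in the arithmetic sequence $\{\frac{1}{2}(k-1)(k+2):k\in\mathbb{N}\}$.

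First I would normalize by fixing the center of mass at the origin and restricting to the configuration variety $\mathcal{S}$, working with the homogeneous degree $-1$ potential $V_{n,2}$. Any planar central configuration $c$ gives a homothetic (Keplerian) orbit, and by the extension of Theorem~\ref{thmmorales} to the $n$-body setting proved in \cite{91}, meromorphic integrability forces $Sp(\nabla^2 V(c))\subset\{\frac{1}{2}(k-1)(k+2):k\in\mathbb{N}\}=\{-1,0,2,5,9,\dots\}$. The key point is that the planar problem carries extra structure: the rotational symmetry $SO(2)$ acts on $\mathcal{S}$, and both the tangent direction along the homothety and the rotational direction produce \emph{known} eigenvalues of the Hessian (the standard values $-1$ and $2$ that appear for every homogeneous potential of degree $-1$). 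The plan is to show that \emph{some} remaining eigenvalue is forced to lie strictly between two consecutive admissible values, or is strictly negative below $-1$, hence forbidden.

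The main technical step, therefore, is a majoration on the eigenvalues of $\nabla^2 V_{n,2}(c)$ that is uniform over all masses and all planar central configurations. Concretely, I would exploit the homogeneity relation (Euler's identity) together with the positivity of the masses to bound the trace, or a suitable partial trace, of the Hessian on the subspace transverse to the trivial symmetry directions; because $V$ is a sum of positive-mass Coulombic terms, one expects a definite-sign estimate of the form $\mathrm{tr}(\nabla^2 V(c))\leq$ (an explicit bound) that is incompatible with all eigenvalues lying in $\{-1,0,2,5,9,\dots\}$. This is exactly the kind of bound the introduction says holds ``for any $n$'' in the planar case, so I would aim to prove the cleanest such inequality and then conclude by a counting/pigeonhole argument against the admissible spectrum.

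The hard part will be establishing the uniform eigenvalue bound without computing the central configurations explicitly, since their number and location depend intricately on the masses. I expect the difficulty to concentrate in controlling the eigenvalues on the genuinely dynamical directions (those orthogonal to the scaling and rotational symmetries), where the quadratic form $\nabla^2 V(c)$ restricted to $T_c\mathcal{S}$ must be shown to possess either a negative eigenvalue below $-1$ or an eigenvalue in a forbidden gap. Handling the degenerate configurations where several bodies coincide, and ensuring the estimate survives the restriction to the singular variety $\mathcal{S}$, will require care; but once the uniform majoration is in hand, the conclusion is immediate from Theorem~\ref{thmmorales} and its $n$-body extension, giving non-integrability for every $n$-tuple of positive masses.
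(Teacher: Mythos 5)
Your proposal has the right target (exhibiting a forbidden eigenvalue at some central configuration, uniformly in the masses) but it is missing the idea that actually makes the paper's proof work, and the route you sketch in its place would not close. The paper does \emph{not} prove a majoration valid at every planar central configuration, nor does it bound the trace of the planar Hessian. It picks a single, very special configuration: the colinear Moulton configuration, which exists (and is unique up to symmetry) for every $n$-tuple of positive masses by Theorem \ref{thmmoult}, and which is also a central configuration of the \emph{planar} problem. At such a configuration the mass-normalized planar Hessian block-diagonalizes as
$$W=\left(\begin{array}{cc} A&0\\ 0& -\frac{1}{2} A \end{array}\right),$$
where $A$ is the Hessian of the colinear problem. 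Pacella's theorem gives $Sp(A)=\{0,2,\lambda_1,\dots,\lambda_{n-2}\}$ with $\lambda_i>2$, so $W$ has eigenvalues $-\frac{1}{2}\lambda_i<-1$. Since every value in $\left\lbrace\frac{1}{2}(k-1)(k+2):k\in\mathbb{N}\right\rbrace$ is $\geq -1$, Theorem \ref{thmmorales} (in its $n$-body extension) immediately yields non-integrability, for all $n$ at once, with no case analysis and no higher variational equations. The ``majoration for any $n$'' mentioned in the introduction is exactly this: a \emph{lower} bound on the colinear eigenvalues (Pacella), converted by the block structure into an upper bound $-\frac{1}{2}\lambda_i<-1$ on planar eigenvalues.

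The concrete gaps in your plan are these. First, a uniform bound over \emph{all} planar central configurations is not available and likely not attainable: their number and location are not controlled (finiteness for $n\geq 6$ is an open problem), and nothing in the paper requires such control. Second, an upper bound on the trace can never, by itself, contradict the admissible spectrum, because the admissible set contains the small values $-1,0,2$; a spectrum made of those is compatible with any trace bound. This is visible in the paper's own colinear $3$- and $4$-body analysis, where the trace bound ($tr(W)<70$ for $n=4$) only reduces the problem to finitely many candidate spectra, each of which then requires second- and higher-order variational equations to eliminate --- an amount of work that does not scale in $n$ and that the planar proof entirely avoids. Your pigeonhole step, ``some eigenvalue is forced into a gap or below $-1$,'' is precisely what needs proof, and no trace inequality supplies it; the block structure at the Moulton configuration does.
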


\begin{thm}\label{thmmain4}
The planar $5$ body problem with masses $m=(-1/4,1,1,1,1)$ restricted to the vector space
$$W=\{q\in\mathbb{R}^{10}\!\!,\; q_{1,1}=q_{1,2}=q_{2,1}+q_{4,1}=q_{2,2}+q_{4,2}=q_{3,1}+q_{5,1}=q_{3,2}+q_{5,2}=0\}$$
is integrable in the Liouville sense.

The spatial $n+3$ body problem with masses $m=(m_1,\dots,m_n,-\alpha,4\alpha,4\alpha)$ restricted to the vector space
$$W=\{q\in\mathbb{R}^{3(n+3)},\;\; q_{n+1,1}=q_{n+1,2}=q_{n+1,3}=q_{n+2,1}=q_{n+2,2}=q_{n+3,1}=$$
$$q_{n+3,2}=q_{n+2,3}+q_{n+3,3}=0,\; {q_{i,3}}_{\mid_{i=1\dots n}}=0,\; {q_i}_{\mid_{i=1\dots n}}=\beta R_\theta c,\; \beta,\theta\in\mathbb{R}\}$$
where $c$ is a central configuration of $n$ bodies with masses $(m_1,\dots,m_n)$ in the plane on the unit circle with center of mass at $0$, $R_\theta$ being a rotation in this plane and $\alpha$ chosen such that the configuration $c$ with the central mass $-\alpha$ is an absolute equilibrium is integrable in the Liouville sense.
\end{thm}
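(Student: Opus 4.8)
The plan is to prove in both cases that $W$ (together with its conormal momenta) is an invariant symplectic submanifold of the flow, and that the restricted Hamiltonian, after the self-interactions cancel, splits by a canonical change of variables and by symmetry reduction into Kepler (central force) problems, which are Liouville integrable. The Morales--Ramis obstruction of Theorem \ref{thmmorales} plays no direct role here: integrability is to be established by explicit reduction.

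First I would establish the invariance of $W$. In the planar $5$-body case, $W$ is exactly the fixed point set of the symplectic involution obtained by composing $(q,p)\mapsto(-q,-p)$ with the body permutation $(2\,4)(3\,5)$; since $m_2=m_4$, $m_3=m_5$ and $m_1$ is fixed, this involution preserves $H_{5,2}$, so its fixed point set $W$ is an invariant symplectic subspace on which the dynamics restricts. In the spatial case the rotation about the $z$-axis, the reflection $q_{\cdot,3}\mapsto -q_{\cdot,3}$ exchanging the two mass-$4\alpha$ bodies, and the in-plane reflections account for every defining equation of $W$ except the homographic constraint $q_i=\beta R_\theta c$. For that constraint I would check directly that the component of $\nabla_i V$ normal to $W$ vanishes, and this is where Definition \ref{defcentral} enters: because $c$ is a co-circular central configuration with $\sum_i m_ic_i=0$, the force on each ring body from the other ring bodies is radial, the central mass adds a radial force, and the two symmetric axial masses contribute a force whose out-of-plane parts cancel and whose in-plane part is again radial; conversely $\sum_i m_ic_i=0$ makes the net in-plane force on the axial bodies vanish, keeping them on the axis.

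Next I would compute the restricted potential, the crux being two cancellations that explain the prescribed masses. Choosing $\alpha$ so that $c$ together with the central mass $-\alpha$ is an absolute equilibrium forces, by Euler's identity for the degree $-1$ potential, $V(c\cup\{-\alpha\})=0$; hence the ring self-energy and the central attraction cancel and the $1/\beta$ term disappears. Likewise $4\alpha$ is exactly the axial mass for which the central-to-axial attraction cancels the axial-to-axial repulsion, killing the $1/z$ term. What survives is a single Keplerian interaction: in the planar case
$$V|_W=\frac{2}{|q_2-q_3|}+\frac{2}{|q_2+q_3|},$$
and in the spatial case $V|_W=C/\sqrt{\beta^2+z^2}$ for a constant $C=8\alpha\sum_i m_i$.

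Finally I would exhibit the integrals. In the planar case the canonical lift of the linear change $u=q_2-q_3$, $v=q_2+q_3$ diagonalises the kinetic energy and turns $V|_W$ into $2/|u|+2/|v|$, i.e. two \emph{independent} planar Kepler problems, whose four integrals (two energies and two angular momenta) are independent and in involution, yielding integrability of the $4$ degree-of-freedom system. In the spatial case, after reducing by the $z$-axis rotation (with $p_\theta$ conserved) the remaining two degree-of-freedom system is a central motion in $(\beta,z)$ governed by $C/\sqrt{\beta^2+z^2}$, which with $H$ and $p_\theta$ supplies the required involutive integrals. The main obstacle I anticipate is precisely this last identification: one must verify that the surviving interaction is not merely central but genuinely \emph{isotropic} Keplerian, since the effective masses in the $\beta$ and $z$ directions are a priori $\sum_i m_i$ and $8\alpha$, and an anisotropic Coulomb potential would fail to be integrable. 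Checking that the absolute-equilibrium normalisation of $\alpha$ and the mass pattern $(-\alpha,4\alpha,4\alpha)$ produce matching effective masses is the delicate quantitative step, and the co-circularity of $c$ (all $|c_i|=1$), which keeps every ring-to-axial distance equal to $\sqrt{\beta^2+z^2}$, is what makes this computation possible.
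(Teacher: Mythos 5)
Your treatment of the planar $5$-body case is correct and is essentially the paper's own proof: the same symmetry gives invariance of $W$, the same cancellation $\left(-\tfrac14-\tfrac14+\tfrac12\right)/\lVert q_2\rVert=0$ strips $V|_W$ down to the two Kepler terms, and your canonical change $u=q_2-q_3$, $v=q_2+q_3$ is, up to the factor $1/\sqrt2$, exactly the orthogonal matrix $R$ the paper exhibits before invoking Proposition \ref{prop1}. Note that this step is legitimate precisely because the four moving bodies all have mass $1$, so the kinetic metric induced on $W$ is a multiple of the Euclidean one and the $\pm$ splitting is orthogonal for it.

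In the spatial $(n+3)$-body case you follow the paper's route up to $V|_W=8\alpha M\left(\beta^2+z^2\right)^{-1/2}$ with $M=\sum_i m_i$ (both cancellations, via Euler's identity and via the pattern $(-\alpha,4\alpha,4\alpha)$, are right), but the step you explicitly defer --- checking that the effective masses in the $\beta$ and $z$ directions agree --- is a genuine gap, and it cannot be closed: they do not agree. The induced kinetic energy on $W$ is
$$T|_W=\tfrac{M}{2}\left(\dot\beta^{\,2}+\beta^2\dot\theta^{\,2}\right)+\tfrac{8\alpha}{2}\,\dot z^{\,2},$$
while $\alpha$ is already pinned down by the absolute-equilibrium condition, which involves only the ring data and bears no relation to $M$: already for the equilateral triangle with unit masses the inward pull on each vertex from the other two is $1/\sqrt3$, so $\alpha=1/\sqrt3$ and $8\alpha=8/\sqrt3\neq 3=M$, and the mismatch persists for every regular $n$-gon. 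Consequently, after normalising the kinetic form, the restricted system is an axially symmetric \emph{anisotropic} Kepler problem, $V=\gamma\left(|U|^2/M+Z^2/(8\alpha)\right)^{-1/2}$, which is not meromorphically integrable: at the Darboux point on the symmetry axis the Hessian has the double transverse eigenvalue $-8\alpha/M$ (and at the in-plane Darboux points the eigenvalue $-M/(8\alpha)$ appears), and the only negative value permitted by Theorem \ref{thmmorales} is $-1$, forcing $M=8\alpha$; this is the Gutzwiller anisotropic Kepler problem, long known to be chaotic. So your suspicion identifies the real crux --- and you should know that the paper's own proof has the same defect: it declares that $\gamma\left(q_{1,1}^2+q_{1,2}^2+q_{n+2,3}^2\right)^{-1/2}$ ``corresponds to a central force, and thus is integrable,'' which tacitly assumes the induced kinetic metric in the coordinates $(q_{1,1},q_{1,2},q_{n+2,3})$ is Euclidean; it is not unless $\sum_i m_i=8\alpha$. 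As written, neither your argument nor the paper's establishes the second half of Theorem \ref{thmmain4}, and the eigenvalue computation above indicates that the statement itself fails whenever $\sum_i m_i\neq 8\alpha$, in particular for the paper's own regular-polygon example.
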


The Theorem \ref{thmmain1} implies the non integrability of the colinear $3$ body problem, which was already done in \cite{116} using the systematic approach using all central configurations and a relation between the eigenvalues of Hessian matrices. This approach is hard to apply to more complicated systems as its cost is exponential in the number of central configurations. This is due to the fact that all central configurations are analyzed, even if only a few of them would be probably enough to conclude to non integrability. Also, the physical assumption that the masses are real positive is not used. In the next section, we thus make a more precise analysis of variational equations near the unique real central configuration, whose existence and uniqueness is a result of Moulton \cite{42}:

\begin{thm}\label{thmmoult} (Moulton \cite{42})
For any fixed positive masses $m_1,\dots,m_n$ with a fixed order of the masses, the colinear $n$ body problem admits exactly one real central configuration.
\end{thm}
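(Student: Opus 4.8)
The plan is to realise the collinear central configurations with a fixed ordering of the bodies as the critical points of $V_{n,1}$ restricted to a suitable constraint surface, and then to show by a second-order computation that every such critical point is a strict nondegenerate minimum; the topology of the constraint surface will then force that minimum to be unique.

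First I would fix the order $q_1<q_2<\dots<q_n$ and work inside the open convex cone $\mathcal{C}$ it defines. After centering (imposing $\sum_i m_iq_i=0$, which by translation invariance of $V_{n,1}$ costs nothing) and normalising the moment of inertia $I=\sum_i m_iq_i^2=1$, the admissible configurations form the domain $S=\mathcal{C}\cap\{I=1\}\cap\{\sum_i m_iq_i=0\}$. Since $\mathcal{C}\cap\{\sum_i m_iq_i=0\}$ is a pointed open convex cone, its radial projection onto the sphere $\{I=1\}$ realises $S$ as an open, contractible $(n-2)$-dimensional cell. On $S$ the potential $V_{n,1}$ is smooth and, because it blows up whenever two consecutive bodies collide, it is proper and tends to $+\infty$ on the whole boundary of $S$. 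Hence $V_{n,1}|_S$ attains a minimum at an interior point, which by Lagrange multipliers satisfies $\nabla V_{n,1}=\alpha\,Mq$ with $M$ the mass matrix, i.e. it is a central configuration in the sense of Definition~\ref{defcentral}. This already yields existence.

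The heart of the argument is the Hessian analysis. Direct differentiation gives $H_{ij}=-2m_im_j|q_i-q_j|^{-3}$ for $i\neq j$ and $H_{ii}=\sum_{j\neq i}2m_im_j|q_i-q_j|^{-3}$, so every row of the Hessian $H$ of $V_{n,1}$ sums to zero. Thus $H$ is exactly the Laplacian of the complete graph weighted by the positive weights $w_{ij}=2m_im_j|q_i-q_j|^{-3}$, hence positive semidefinite with kernel spanned by the translation direction $(1,\dots,1)$. Since $\sum_i m_i\neq 0$, this kernel is transverse to the centering hyperplane, so $H$ is positive definite on $\{\sum_i m_i\delta q_i=0\}$ throughout $\mathcal{C}$. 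At a central configuration I would pin down $\alpha$ by Euler's identity: contracting $\nabla V_{n,1}=\alpha Mq$ with $q$ and using homogeneity of degree $-1$ gives $\alpha I=\sum_i q_i\partial_{q_i}V_{n,1}=-V_{n,1}<0$, so $\alpha<0$. The constrained Hessian of $V_{n,1}|_S$ on the tangent space $T=\{\sum_i m_i\delta q_i=0,\ \sum_i m_iq_i\delta q_i=0\}$ is then $(H-\alpha M)|_T$, the sum of a form positive definite on the centering hyperplane and the positive definite form $-\alpha M$; it is therefore positive definite. Every collinear central configuration is thus a strict nondegenerate local minimum of $V_{n,1}|_S$.

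Finally I would upgrade this to global uniqueness. The domain $S$ is connected and contractible and $V_{n,1}|_S$ is proper with all critical points being strict local minima; were there two distinct minima, a mountain-pass argument (or equivalently a Morse-theoretic count against $\chi(S)=1$) would produce a critical point of saddle type, contradicting the positive definiteness just established. Hence $V_{n,1}|_S$ has a single critical point, giving exactly one collinear central configuration in the chosen chamber. I expect the main obstacle to be precisely this second-order step: identifying $H$ with a weighted graph Laplacian and, above all, correctly handling the constrained Hessian together with the sign of the multiplier $\alpha$, since it is only after adding the positive definite curvature term $-\alpha M$ that positive definiteness on $T$ — and thus the ``minima only'' property driving the global uniqueness — becomes available.
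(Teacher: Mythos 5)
Your proof is correct, but there is nothing in the paper to compare it against: Theorem \ref{thmmoult} is quoted as a classical result of Moulton \cite{42} and is only used, never proved, in this article. What you have reconstructed is the standard modern variational proof (Conley's argument, as presented for instance in Moeckel's lectures on central configurations): the ordered, centered, $I$-normalized configurations form a contractible open cell $S$ on which $V_{n,1}$ is proper, giving existence; the key one-dimensional fact is exactly the one you isolate, namely that the Hessian of $V_{n,1}$ is the Laplacian of the complete graph with weights $2m_im_j|q_i-q_j|^{-3}$, hence positive semidefinite with kernel the translations---a property that fails in dimension $\geq 2$, which is why uniqueness is genuinely special to the collinear problem---and adding the term $-\alpha M$ with $\alpha=-V<0$ (Euler's identity) makes the constrained Hessian positive definite, so every critical point is a nondegenerate minimum and the Morse-theoretic or mountain-pass count on a connected contractible cell forces uniqueness. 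Two routine steps deserve a line each in a polished write-up: (i) the Lagrange condition a priori reads $\nabla V=\alpha Mq+\nu m$ with a second multiplier $\nu$ for the linear center-of-mass constraint, and pairing with $(1,\dots,1)$ together with translation invariance of $V$ gives $\nu\sum_i m_i=0$, hence $\nu=0$; (ii) the mountain-pass (or Morse) argument on the non-complete open manifold $S$ requires the Palais--Smale condition, which is precisely what properness of $V|_S$ supplies, so it should be invoked explicitly. Finally, note that your central-configuration equation $\nabla V=\alpha Mq$ is the standard one that Moulton's theorem concerns, and that uniqueness is of course understood modulo translation and positive dilation, which your normalization $\sum_i m_iq_i=0$, $I=1$ accounts for; the paper's Definition \ref{defcentral}, as literally written, drops the factor $m_i$ on the right-hand side, so your convention is the one to keep.
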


Remark that also in the not trivially integrable example we found, central configurations seem to play a key role. In particular, they all contain continuums of central configurations (the first case contains the famous $5$ body central configuration of Roberts \cite{94b}). According to a conjecture of Smale, proved for $n=4,5$ in \cite{43,93}, such continuums are not possible with positive masses.

\section{The colinear $3$ body problem}

\subsection{Central configurations}

\begin{prop} \label{thmeuler} (Euler)
We pose $c=(-1,0,\rho)$ with $\rho\in\mathbb{C}\setminus \{0,-1\}$. If $c$ is a central configuration of the colinear $3$ body problem (corresponding to the potential $V_{3,1}$), then the following equation is satisfied
\begin{equation}\begin{split}\label{eqeuler}
(m_2+m_3)+(2m_2+3m_3)\rho+(3m_3+m_2)\rho^2-\\
(3m_1+m_2)\rho^3-(3m_1+2m_2)\rho^4-(m_1+m_2)\rho^5=0
\end{split}\end{equation}
\end{prop}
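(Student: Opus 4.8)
The plan is to write the central-configuration equations for $V_{3,1}$ at $c=(-1,0,\rho)$ explicitly and then eliminate the two auxiliary parameters (the multiplier $\alpha$ and the translation $g$) to obtain a single polynomial relation in $\rho$. First I fix the ordering $q_1<q_2<q_3$ carried by $(-1,0,\rho)$; since in dimension one $V_{n,1}$ is a rational, single-valued potential, on this chamber each distance resolves as $\lVert q_i-q_j\rVert=q_j-q_i$ for $i<j$, so that
$$V_{3,1}=\frac{m_1m_2}{q_2-q_1}+\frac{m_1m_3}{q_3-q_1}+\frac{m_2m_3}{q_3-q_2},$$
and this rational expression is used for every $\rho\in\mathbb{C}\setminus\{0,-1\}$, the excluded values being exactly the binary collisions $q_2=q_3$ and $q_1=q_3$ (the poles $\rho=0$ and $\rho=-1$ below).

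Next I differentiate and evaluate at $c$, using $q_2-q_1=1$, $q_3-q_1=\rho+1$ and $q_3-q_2=\rho$:
\begin{align*}
\partial_{q_1}V(c)&=m_1m_2+\frac{m_1m_3}{(\rho+1)^2},\\
\partial_{q_2}V(c)&=-m_1m_2+\frac{m_2m_3}{\rho^2},\\
\partial_{q_3}V(c)&=-\frac{m_1m_3}{(\rho+1)^2}-\frac{m_2m_3}{\rho^2}.
\end{align*}
By Definition \ref{defcentral}, $c$ is a central configuration precisely when there exist $\alpha,g$ with $\partial_{q_i}V(c)=\alpha\,m_i\,(c_i-g)$ for $i=1,2,3$, the factor $m_i$ being the kinetic mass matrix entering the equations of motion $m_i\ddot q_i=-\partial_{q_i}V$. (Since $V$ is translation invariant, $\sum_i\partial_{q_i}V(c)=0$, so summing the three relations pins $g$ to the center of mass when $\alpha\neq0$.)

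The heart of the argument is the elimination. Writing $f_i:=\partial_{q_i}V(c)/m_i$, the three equations become $f_i=\alpha\,c_i-\beta$ with $\beta:=\alpha g$, a linear system of three equations in the two unknowns $(\alpha,\beta)$. It is consistent if and only if the $3\times3$ determinant
$$\det\begin{pmatrix} c_1 & 1 & f_1\\ c_2 & 1 & f_2\\ c_3 & 1 & f_3\end{pmatrix}=0$$
vanishes, which for $c=(-1,0,\rho)$ expands, up to an overall sign, to $\rho f_1-(1+\rho)f_2+f_3=0$. Substituting the three $f_i$ and clearing the common denominator $\rho^2(\rho+1)^2$ turns this into a polynomial identity linear in $m_1,m_2,m_3$; collecting the coefficient of each mass reproduces, up to the global sign, the three polynomials appearing in \eqref{eqeuler}.

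The computation is essentially bookkeeping and I expect no genuine obstacle. The only points that require care are the sign convention fixed by the chosen chamber (equivalently, the rationality of $V_{n,1}$) and the simultaneous elimination of both $\alpha$ and the translation $g$, which the determinant above handles cleanly instead of by substitution. The main residual risk is an arithmetic slip when expanding $\rho f_1-(1+\rho)f_2+f_3$ and verifying that the mass-coefficients $\rho^3(\rho^2+3\rho+3)$, $(\rho+1)^2(\rho^3-1)$ and $-(3\rho^2+3\rho+1)$ are, up to the common sign, those of \eqref{eqeuler}.
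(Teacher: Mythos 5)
Your proof is correct, and I verified the final expansion: clearing the denominator $\rho^2(\rho+1)^2$ in $\rho f_1-(1+\rho)f_2+f_3=0$ gives the mass coefficients $\rho^3(\rho^2+3\rho+3)$, $(\rho+1)^2(\rho^3-1)$ and $-(3\rho^2+3\rho+1)$, which are exactly the negatives of the coefficients of $m_1,m_2,m_3$ in \eqref{eqeuler}, so the two equations agree up to a global sign. Note that the paper states this classical result of Euler without proof, so there is nothing to compare against; your elimination-by-determinant argument is a clean self-contained derivation, and it is consistent with how the paper later uses \eqref{eqeuler}, namely as a relation \emph{linear} in the masses (proof of Proposition \ref{thm0}). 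The one interpretive step you took---reading Definition \ref{defcentral} as $\partial_{q_i}V(c-g)=\alpha\,m_i(c_i-g)$, i.e.\ with the mass matrix, even though the paper's displayed formula omits the factor $m_i$---is the right call: without it the resulting relation would be quadratic in the masses and could not reproduce \eqref{eqeuler}, and it is the condition actually forced by requiring a homothetic solution of $m_i\ddot q_i=-\partial_{q_i}V$.
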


In the colinear $3$ body problem, we can always translate a central configuration because the potential is invariant by translation. Moreover, due to this definition, the set of central configurations is also invariant by dilatation, so for any central configuration $q\in\mathbb{C}^3$, after translation and dilatation, we can always write it $q=(-1,0,\rho)$ with $\rho\in\mathbb{C}\setminus \{0,-1\}$. The biggest problem that authors about the subject (see \cite{5}) seem to have encountered is the fact that we have a polynomial of degree $5$, and so not very easy to use. We will see that the complexity of central configuration equations is not a problem at all if we consider the problem differently.

The Theorem \ref{thmmoult} of Moulton suggests that we should work in an opposite way. We fix $\rho>0$ and we search the masses such that $c=(-1,0,\rho)$ is a central configuration. We are then sure that if we consider all possible $\rho$ we will then consider all positive masses (because for each triplet of masses, there is at least one $\rho$ that is convenient). More precisely, we have

\begin{prop}\label{thm0}
The set of masses $m_1,m_2,m_3$ such that $m_1+m_2+m_3=1$ and $c=(-1,0,\rho)$ with
\begin{equation}\label{eq0}
\rho\in \mathbb{C}\setminus \lbrace \rho, \;\;\rho(\rho+1)(1+2\rho+\rho^2+2\rho^3+\rho^4)=0 \rbrace
\end{equation}
is a central configuration, is an affine subspace of dimension $1$ parametrized by
\begin{equation}\begin{split}\label{eq1}
m_1 & =s\\
m_2 & =-\frac{3s\rho^3+3s\rho^4+s\rho^5+s-1+3\rho s-3\rho+3\rho^2 s-3\rho^2}{\rho(1+2\rho+\rho^2+2\rho^3+\rho^4)}\\
m_3 & = \frac{2\rho s+\rho^2s+2s\rho^3+s\rho^4+s-1-2\rho-\rho^2+\rho^3+2\rho^4+\rho^5}{\rho(1+2\rho+\rho^2+2\rho^3+\rho^4)}
\end{split}\end{equation}
Conversely, for each triplet of masses $(m_1,m_2,m_3)\in {\mathbb{R}_+^*}^3,\;\; m_1+m_2+m_3=1$, there exists a central configuration of the form $(-1,0,\rho)$ with condition \eqref{eq0} and $\rho\in\mathbb{R}_+^*$. Eventually, for $\rho \in \mathbb{R},\;\rho\geq 1$, the $m_1,m_2,m_3$ are positive if and only if
$$s \in \left]0,\frac{1+3\rho+3\rho^2}{(1+2\rho+\rho^2+2\rho^3+\rho^4)(1+\rho)}\right[$$
\end{prop}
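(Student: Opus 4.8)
The plan is to treat the two defining conditions as affine equations in the unknowns $(m_1,m_2,m_3)$ with $\rho$ fixed. The normalization $m_1+m_2+m_3=1$ is affine, and the central configuration equation \eqref{eqeuler} of Proposition \ref{thmeuler} is \emph{linear} in the masses once $\rho$ is fixed; collecting coefficients it reads
$$(-3\rho^3-3\rho^4-\rho^5)\,m_1+(1+2\rho+\rho^2-\rho^3-2\rho^4-\rho^5)\,m_2+(1+3\rho+3\rho^2)\,m_3=0.$$
First I would set $m_1=s$ and solve the resulting $2\times 2$ linear system for $(m_2,m_3)$. Its determinant is
$$\det\begin{pmatrix} 1 & 1 \\ 1+2\rho+\rho^2-\rho^3-2\rho^4-\rho^5 & 1+3\rho+3\rho^2 \end{pmatrix}=\rho(1+2\rho+\rho^2+2\rho^3+\rho^4),$$
which is exactly the factor forbidden to vanish in \eqref{eq0}. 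Hence under condition \eqref{eq0} the system is invertible, Cramer's rule produces a unique $(m_2,m_3)$ for each $s$, and the solution set is therefore an affine line parametrized by $s=m_1$. The only care needed here is to expand the Cramer numerators and check they agree with \eqref{eq1}.

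For the converse I would invoke Moulton's Theorem \ref{thmmoult}: for positive masses in a fixed order there is a unique real central configuration. Placing body $2$ in the middle and normalizing by a translation and a dilatation (admissible since $V_{3,1}$ is invariant under both) brings this configuration to the form $(-1,0,\rho)$ with $\rho>0$. When $\rho>0$ each of the factors $\rho$, $\rho+1$, and $1+2\rho+\rho^2+2\rho^3+\rho^4$ is strictly positive, so \eqref{eq0} holds automatically; this furnishes the required $\rho\in\mathbb{R}_+^*$.

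For the positivity statement with $\rho\geq 1$, I would study the signs of the three numerators in \eqref{eq1}, the common denominator $\rho(1+2\rho+\rho^2+2\rho^3+\rho^4)$ being positive. Two factorizations make this transparent: the numerator of $m_2$ equals $(1+3\rho+3\rho^2)-sA$ with $A=(1+\rho)(1+2\rho+\rho^2+2\rho^3+\rho^4)=1+3\rho+3\rho^2+3\rho^3+3\rho^4+\rho^5>0$, while the numerator of $m_3$ equals $sC+(\rho+1)^2(\rho^3-1)$ with $C=1+2\rho+\rho^2+2\rho^3+\rho^4>0$. For $\rho\geq 1$ the constant $(\rho+1)^2(\rho^3-1)$ is nonnegative, so $m_3>0$ holds as soon as $s>0$; the condition $m_1=s>0$ gives the left endpoint, and $m_2>0$ is equivalent to $s<(1+3\rho+3\rho^2)/A$. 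Intersecting the three inequalities yields precisely the stated interval. The main obstacle is purely computational rather than conceptual: correctly recognizing these factorizations (in particular the identity for the $m_3$ numerator) and keeping track of the directions of the inequalities.
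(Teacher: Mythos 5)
Your proposal is correct and follows essentially the same route as the paper: the same linear system in the masses with the determinant $\rho(1+2\rho+\rho^2+2\rho^3+\rho^4)$ forcing maximal rank under \eqref{eq0}, Moulton's Theorem \ref{thmmoult} for the converse, and a sign analysis of the mass functions, affine in $s$, for the positivity interval. Your explicit factorizations of the numerators of $m_2$ and $m_3$ are just a slightly more transparent packaging of the paper's step of solving $m_i=0$ for $s$ and noting that an affine function changes sign at most once.
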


\begin{proof}
Using equation of Proposition \ref{thmeuler}, we get the following equations
$$\left(\begin{array}{ccc} \!\! 3\rho^3\!-3\rho^4\!-\rho^5  &  1+2\rho+\rho^2\!-\rho^3\!-2\rho^4\!-\rho^5  &  1+3\rho+3\rho^2 \!\!\\1&1&1\\ \end{array}\right) \left(\begin{array}{c} \!\! m_1\!\!\!\\ \!\! m_2\!\!\!\\ \!\! m_3\!\!\!\\ \end{array}\right)=\left(\begin{array}{c} \!\!0\!\!\\\!\!1\!\!\\ \end{array}\right)$$
This is an affine equation and so the space of solutions is an affine subspace. Taking $\rho \in \mathbb{C}\setminus \{ \rho, \;\;\rho(1+2\rho+\rho^2+2\rho^3+\rho^4)\}$, the matrix has always maximal rank, and so the space of solution is of dimension $1$, which we parametrize by $s$. Conversely, the Euler equation \eqref{eqeuler}, thanks to Moulton result for $n=3$, has always exactly one real positive solution.

Eventually, let us look at the case $\rho \in \mathbb{R},\;\rho\geq 1$. We want the masses to be positive, and according to our parametrization, the masses are affine functions in $s$. An affine function changes of sign at most once. Solving $m_i=0$, we get
$$m_1=0\Rightarrow s=0$$
$$m_2=0\Rightarrow s=\frac{1+3\rho+3\rho^2}{3\rho^3+3\rho^4+\rho^5+1+3\rho+3\rho^2}$$
$$m_3=0\Rightarrow s=\frac{1+2\rho+\rho^2-\rho^3-2\rho^4-\rho^5}{2\rho+\rho^2+2\rho^3+\rho^4+1}$$
The last equality gives us for $\rho\geq1$ $s\leq 0$ which is impossible because $m_1\geq 0$. So $m_3$ does not change of sign for any $s> 0$ and is positive. The positivity of $m_2$ gives us the constraint.
\end{proof}

Let us remark that the constraint $\rho \geq 1$ is not a constraint in fact, because using dilatation and the symmetry consisting to reverse the order of \textbf{all the masses}, we exchange $\rho$ by $1/\rho$. After this first proposition, we can study the integrability of the colinear $3$ body problem for real positive masses.

In the following, we will note $W(c) \in M_3(\mathbb{C})$ the $3\times 3$ matrix such that
\begin{equation}\label{eqW}
W(c)_{i,j}=\frac{1} {m_i} \frac{\partial^2} {\partial q_i \partial q_j} V_3(c)
\end{equation}
where $V_3$ is the potential of the colinear $3$ body problem and $c\in\mathbb{C}^3$.

\subsection{Non-integrability}

In this subsection, we will prove Theorem \ref{thmmain1}.

\begin{lem}
For any $\rho \in \mathbb{R},\;\rho\geq 1$, there exists among the masses $(m_1,m_2,m_3)\in{\mathbb{R}_+^*}^3$ such that $m_1+m_2+m_3=1$ and $c=(-1,0,\rho)$ is a central configuration for the triplet of masses $(m_1,m_2,m_3)$ at most $3$ triplet of masses for which the Galois group of first order variational equation has a Galois group whose identity component is abelian.
\end{lem}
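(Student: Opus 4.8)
The plan is to read the statement through Theorem \ref{thmmorales} and its $n$-body extension: for a fixed $\rho\ge 1$, parametrising the admissible masses by the single scalar $s$ via \eqref{eq1}, the identity component of the Galois group of the first order variational equation is abelian if and only if the spectrum of $\nabla^2 V_3(c)$ — equivalently, after the normalisation of the multiplier that turns the homothetic orbit into a standard one, the spectrum of $W(c)$ — is contained in $S:=\{\frac{1}{2}(k-1)(k+2):k\in\mathbb{N}\}$. So the first task is to understand $Sp(W(c))$ as a function of $s$.

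The key reduction is that two of the three eigenvalues of $W(c)$ are structurally forced. Translation invariance of $V_3$ gives $\sum_j \partial^2_{ij}V_3(c)=0$, so $(1,1,1)^{\mathsf T}$ lies in the kernel and $0\in Sp(W(c))$; Euler's identity for the degree $-1$ homogeneous potential yields $\nabla^2 V_3(c)\,c=-2\nabla V_3(c)$, so $c$ is an eigenvector with eigenvalue $-2\alpha$ (where $\alpha$ is the multiplier of the central configuration), and under the normalisation of Theorem \ref{thmmorales} this becomes the admissible value $2\in S$. Hence only the third eigenvalue is genuinely free, and the spectrum condition reduces to a single scalar condition $\Lambda(s)\in S$, where $\Lambda(s)$ denotes the normalised nontrivial eigenvalue. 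Since $\det W(c)=0$, the two nonzero eigenvalues are the roots of $x^2-\mathrm{tr}\,W(c)\,x+\sigma_2(W(c))=0$; the entries $W(c)_{ij}$ of \eqref{eqW} are evaluated at the explicit distances $|c_1-c_2|=1$, $|c_2-c_3|=\rho$, $|c_1-c_3|=1+\rho$, so $\mathrm{tr}\,W(c)$ is an affine function of $s$ and $\sigma_2(W(c))$ a quadratic one, making $\Lambda$ an explicit rational function of $s$ for each fixed $\rho$. The multiplier $\alpha$ entering the normalisation is likewise obtained as a ratio of affine functions of $s$ from the central configuration equation, using that the center of mass $g=-m_1+m_3\rho$ is affine in $s$.

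Finally I would count admissible solutions on the positivity interval for $s$ furnished by Proposition \ref{thm0}. The strategy is to show that $s\mapsto\Lambda(s)$ is strictly monotone there — so that each element of $S$ is attained at most once — and that the range of $\Lambda$ over this interval meets the discrete set $S$ in at most three points. Monotonicity should follow from a sign analysis of the derivative of the rational function $\Lambda$. The range estimate is the part I expect to be the main obstacle: it amounts to bounding the limiting values of $\Lambda$ at the two endpoints $s=0$ and $s$ equal to the upper bound of Proposition \ref{thm0}, and doing so uniformly over all $\rho\ge 1$, so that the resulting interval is squeezed strictly between the consecutive admissible values and can contain no more than three elements of $S$ (which will turn out to be $\{5,9,14\}$). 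Combining injectivity with this uniform range bound yields at most three triplets $(m_1,m_2,m_3)$, as claimed; the precise identification of these three values and of the corresponding mass families $(E_k)$ is then exactly what is needed to complete Theorem \ref{thmmain1}.
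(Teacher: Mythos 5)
Your proposal takes essentially the same route as the paper's own proof: the paper likewise reduces the spectrum to $\{0,2,G(s,\rho)\}$ with the nontrivial eigenvalue $G$ fractional-linear in $s$, shows $G(\cdot,\rho)$ is monotone on the positivity interval $I(\rho)$ from Proposition \ref{thm0} (its pole corresponds to an absolute equilibrium, which forces $m_2\leq 0$ and so cannot occur), and then bounds the range uniformly in $\rho$ by proving the intervals $G(I(\rho),\rho)$ shrink as $\rho\geq 1$ grows, with $G(I(1),1)=\left]2,16\right[$, so that only the admissible values $5,9,14$ can be attained, each at most once. Your plan (injectivity in $s$ plus a uniform endpoint estimate over $\rho\geq 1$) is exactly this argument, so it is correct in structure and would complete once the explicit computations are carried out.
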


\begin{proof}
The matrix $W$ for the central configuration of the form $c=(-\gamma+g,g,\rho \gamma+g)$ is given by
$$\frac{2}{\gamma^3}\left(\begin{array}{ccc}\frac{m_2+3m_2\rho+3m_2\rho^2+m_2\rho^3+m_3}{(1+\rho)^3}&-m_2&-\frac{m_3}{(1+\rho)^3}\\-m_1&\frac{m_1\rho^3+m_3 }{\rho^3}&-\frac{m_3}{\rho^3}\\ -\frac{m_1}{(1+\rho)^3}&-\frac{m_2}{\rho^3}&\frac{m_1\rho^3+m_2+3 m_2\rho+3m_2\rho^2+m_2  \rho^3}{(1+\rho)^3 \rho^3}\\ \end{array}\right)$$
We need to choose $\gamma,g$ such that the multiplier of the central configuration is $-1$ and the center of mass at $0$ (because we want an orbit of the form $c.\phi(t)$). We first compute the spectrum of $W$ which gives
$$\left[0,\frac{4(2\rho^2+3\rho+2)} {(3\rho^3+3\rho^4+\rho^5+1+3\rho+3\rho^2)\gamma^3},-\frac{2(s\rho^4+2s\rho^3-\rho^2+\rho^2s-2\rho+2\rho s-1+s)} {\rho^3(1+2\rho+\rho^2)\gamma^3}\right]$$
where the masses $m_1,m_2,m_3$ are parametrized by $s$ according to the formula \eqref{eq1}. The constraint that the multiplier of $c$ should be equal to $-1$ gives
$$\gamma^3=-\frac{(s\rho^4+2s\rho^3-\rho^2+\rho^2s-2\rho+2\rho s-1+s)} {\rho^3(1+2\rho+\rho^2)}$$
and so we get 
$$Sp(W(c)) =\left\lbrace 0,2,-\frac{4(1+\rho)\rho^3(2\rho^2+3\rho+2)} {(s\rho^4+2s\rho^3-\rho^2+\rho^2s-2\rho+2\rho s-1+s)(1+2\rho+\rho^2+2\rho^3+\rho^4)} \right\rbrace$$
Let us note $G(s,\rho)$ this last eigenvalue, which is a fractional linear function in $s$. The singularity in $s$ of $G$ is at
$$s=\frac{1+2\rho+\rho^2}{1+2\rho+\rho^2+2\rho^3+\rho^4}$$ 
This value of $s$ correspond to the case where the central configuration is in fact an absolute equilibrium. Indeed, we then have the multiplier of the central configuration equal to zero. This special case produce the following set of masses
$$(m_1,m_2,m_3)=\left(\frac{(\rho+1)^2}{1+2\rho+\rho^2+2\rho^3+\rho^4},\frac{-\rho^2}{1+2\rho+\rho^2+2\rho^3+\rho^4},\frac{(\rho+1)^2\rho^2}{1+2\rho+\rho^2+2\rho^3+\rho^4}\right)$$
The mass $m_2$ is always non-positive, and so this case is impossible. Now in the general case, we solve the equation
$$G(s,\rho)\in \left\lbrace \textstyle{\frac{1}{2}}(i-1)(i+2)\;\;i \in \mathbb{N} \right\rbrace$$
and we obtain the following solutions
\begin{align*}
m_1 & =\frac{(\rho+1)(-8\rho^5+k\rho^5-12\rho^4+3k\rho^4-8\rho^3+3k\rho^3+3k\rho^2+3k\rho+k)}{k(1+2\rho^3+\rho^4+2\rho+\rho^2)^2}\\
m_2 & =-\frac{(-8\rho^4+k\rho^4-28\rho^3+2k\rho^3+k\rho^2-40\rho^2-28\rho+2k\rho-8+k)\rho^2}{k(1+2\rho^3+\rho^4+2\rho+\rho^2)^2}\;\;\;\;(E_k)\\
m_3 & =\frac{(\rho+1)(k\rho^5+3k\rho^4+3k\rho^3-8\rho^2+3k\rho^2-12\rho+3k\rho-8+k)\rho^2}{k(1+2\rho^3+\rho^4+2\rho+\rho^2)^2}\\
\end{align*}
with $k \in \lbrace  \frac{1}{2}(i-1)(i+2)\;\; k \in \mathbb{N} \rbrace$. These solutions are not valid for $k=0$, but we have that if $G(s,\rho)=0$ then
$$(1+\rho)(2\rho^2+3\rho+2)=0$$
which is excluded because $\rho\in \mathbb{R}_+^*$. 

Let us look now what happen if we restrict ourselves to positive masses. We take $\rho \geq 1$ and we look at the sign of the masses given by the curves $(E_k)$. We already know according to Proposition \ref{thm0} that the interval $I(\rho)$ to consider for $s$ is the following
$$I(\rho)=\left[0,\frac{1+3\rho+3\rho^2}{(1+2\rho+\rho^2+2\rho^3+\rho^4)(1+\rho)}\right]$$
and noting that $(1+2\rho+\rho^2)>(1+3\rho+3\rho^2)/(1+\rho)$ for $\rho\geq 1$, the singularity of $G(s,\rho)$ is never in $I(\rho)$, and so for $\rho\geq 1$, $G(.,\rho)$ growing on $I(\rho)$.

Then $G(.,\rho)$ is a bijection of $I(\rho)$ on 
$$G(I(\rho),\rho)=\left]\frac{4(1+\rho)\rho^3(2\rho^2+3\rho+2)}{(1+\rho^2+2\rho)(1+2\rho+\rho^2+2\rho^3+\rho^4)},\frac{4(2\rho^2+3\rho+2)(1+\rho)^2}{1+2\rho+\rho^2+2\rho^3+\rho^4}\right[$$
Studying these functions, we prove that the interval $G(I(\rho),\rho)$ is decreasing when $\rho\geq 1$ grows. Knowing that $G(I(1),1)=]2,16[$, the only possible eigenvalues corresponding to a Galois group with an abelian identity component are $5,9,14$.
\end{proof}

Let us now remark that the potential $V_{3,1}$ of the colinear $3$ body problem can be reduced. Indeed, this potential is invariant by translation, and by making the symplectic variable change $p_i\longrightarrow \sqrt{m_i} p_i,\; q_i\longrightarrow q_i/\sqrt{m_i}$, the kinetic part in the Hamiltonian becomes the standard kinetic energy $(p_1^2+p_2^2+p_3^2)/2$. So the set of potential $V_{3,1}$ with parameters $(m_1,m_2,m_3)\in{\mathbb{R}_+^*}^3$, $m_1+m_2+m_3=1$ is a set of homogeneous potentials of degree $-1$ in the plane.

\begin{cor}
The colinear $3$-body problem with positive masses is not meromorphically integrable.
\end{cor}

\begin{proof}
We proved that only the eigenvalues $5,9,14$ are possible for integrability of the colinear $3$-body problem. In \cite{10}, all potentials having these eigenvalues have been classified and there are not meromorphically integrable.
\end{proof}

Remark that in the limit case when two masses tend to $0$, the potential $V_{3,1}$ after reduction is not singular and converges to a potential of the form $\alpha/q_1+\alpha/q_2$, which has the eigenvalue $2$ and is integrable.

\subsection{Higher variational equations}

Let us now compute exactly at which order the variational equations near the unique real Darboux point have a Galois group whose identity component is not Abelian. Indeed, using \cite{10}, we now that on the curves $E_5,E_{14}$, the potentials are integrable at most up to order $4$, and on $E_9$ at most to order $6$ (which reduces to $4$ in our case, because the potential $V_3$ is real and integrable cases to order $5,6$ are complex).

\subsubsection{At order $2$}

To study the Galois group of second order variational equations, we apply Theorem 2 of \cite{9}. We have however to take into account that the kinetic energy is $p_1^2/(2m_1)+p_2^2/(2m_2)+p_3/(2m_3)$ instead of $(p_1^2+p_2^2+p_3^2)/2$. This standard form of kinetic energy can be obtained by a symplectic change of variable. The Hessian matrix $\nabla^2 V(c)$ after this variable change is simply the matrix $W$ defined in \eqref{eqW}.

\begin{lem}
Let $\rho \in \mathbb{R},\;\rho\geq 1$ be a real number, $k\in\{5,9,14\}$ and masses $(m_1,m_2,m_3)\in E_k$. The variational equations at order $2$ near the homothetic orbit associated to $c$ have a Galois group whose identity component is Abelian if and only if the masses belong to the set
$$\left\lbrace \left(\frac{12}{35},\frac{11}{35},\frac{12}{35}\right),\left(\frac{24}{49},\frac{1}{49},\frac{24}{49}\right)\right\rbrace \cup E_9$$
\end{lem}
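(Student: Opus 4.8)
The plan is to apply the order-$2$ criterion of Theorem 2 of \cite{9}, which characterises when the identity component of the Galois group of the second variational equation of a planar homogeneous potential of degree $-1$ is Abelian in terms of the Hessian eigenvalue and the third-order jet of the potential at the Darboux point. After the symplectic rescaling $q_i\to q_i/\sqrt{m_i}$, the colinear $3$ body potential is, as already noted, a homogeneous potential of degree $-1$ in the plane, with Hessian equal to the matrix $W$ of \eqref{eqW}: the null eigenvalue $0$ corresponds to the translation direction $(1,1,1)$, the eigenvalue $2$ to the homothetic direction, and $G(s,\rho)$ to the transverse direction of the genuine reduced plane. First I would fix the eigenframe of $W$ at $c=(-1,0,\rho)$, after the translation and dilation of the previous lemma that normalise the multiplier to $-1$ and put the center of mass at $0$, so that the transverse eigenvector spans the plane on which the criterion of \cite{9} is evaluated, and so that $G(s,\rho)=k$ plays the role of the eigenvalue in that criterion.

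Next I would compute the rescaled third-derivative tensor $T_{ijk}=\frac{1}{\sqrt{m_im_jm_k}}\frac{\partial^3}{\partial q_i\partial q_j\partial q_k}V_3(c)$ and project it onto the eigenframe of $W$. The criterion then reads as a polynomial condition in $k$ and in the components of this projected jet. Substituting the parametrization \eqref{eq1} and restricting to the curve $E_k$ — on which the constraint $G(s,\rho)=k$ fixes $s$ as a rational function of $\rho$ — reduces the abelian-at-order-$2$ condition to a single algebraic equation $P_k(\rho)=0$ in the one variable $\rho$, for each $k\in\{5,9,14\}$ separately.

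The conclusion then follows from a root analysis of the three polynomials $P_k$. For $k=9$ I expect $P_9\equiv 0$, i.e. the second-order obstruction vanishes identically along $E_9$; this is the resonant case of even index $i=4$ in $k=\tfrac12(i-1)(i+2)$, and it is exactly what forces the whole curve $E_9$ to appear in the statement. For $k=5$ and $k=14$ (odd indices $i=3,5$) I expect $P_k$ to have, in the admissible range $\rho\ge 1$ giving positive masses, the single root $\rho=1$. Here the symmetry $\rho\leftrightarrow 1/\rho$ together with the reversal of the mass order noted after Proposition \ref{thm0} is the decisive tool: it makes the abelian-at-order-$2$ locus invariant, hence $P_k$ is (anti)palindromic, so its roots come in pairs $(\rho_0,1/\rho_0)$ and one shows that every pair other than $\rho=1$ is complex or yields a nonpositive mass. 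Evaluating the parametrization $(E_k)$ at $\rho=1$ gives $m_1=m_3=\tfrac{4(k-2)}{7k}$ and $m_2=\tfrac{16-k}{7k}$, that is $(12/35,11/35,12/35)$ for $k=5$ and $(24/49,1/49,24/49)$ for $k=14$, which are the two symmetric triplets in the statement.

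The main obstacle will be the honest computation of the projected third-order jet and the resulting polynomials, followed by certifying the two decisive facts $P_9\equiv 0$ and ``$\rho=1$ is the only admissible root of $P_5$ and $P_{14}$''. The identity $P_9\equiv 0$ is the conceptually delicate point, since it must come out of the particular shape of the order-$2$ criterion at the even index $i=4$ rather than from any geometric symmetry of the configuration; the root isolation for $k=5,14$ is a finite but heavy elimination over $\rho\ge 1$, best handled — as elsewhere in this paper — with the real-algebraic software \cite{74} to certify positivity of the masses and the absence of further real roots.
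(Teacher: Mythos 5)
Your proposal follows essentially the same route as the paper: project the third-order jet of $V_3$ onto the eigenvectors of eigenvalues $2$ and $k$, apply the order-$2$ table of \cite{9}, observe that no condition arises for $k=9$ (the even-index case $i=4$, exactly as you anticipated) while for $k=5,14$ the single condition $D^3V(X_3,X_3,X_3)=0$ reduces to $(\rho-1)P(\rho)=0$, whose only admissible root is $\rho=1$, yielding the two symmetric triplets. The only difference is in certifying the root isolation: the paper's palindromic factor $P(\rho)$ has all positive coefficients for $k=5,14$, so it has no positive real roots at all, making your symmetry-plus-RAGlib argument unnecessary (though not incorrect).
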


\begin{proof}
We compute the third order derivatives of $V$ at $c$. Noting $X_2$ the eigenvector of eigenvalue $2$ and $X_3$ the eigenvector of eigenvalue $k$, we have
$$D^3V(X_2,X_2,X_2)=D^3V(X_3,X_3,X_2)=-\frac{3\sqrt{2\rho^2+3\rho+2}\sqrt{2k}}{(\rho+1)^2g^{\frac{4}{3}}\sqrt{k-2}\rho^{\frac{3}{2}}} \quad D^3V(X_3,X_2,X_2)=0$$
$$D^3V(X_3,X_3,X_3)=\frac {-3\sqrt {2}\sqrt {2\,{\rho}^{2}+3\,\rho+2}(\rho-1)P(\rho) }
{(1+2\rho^3+\rho^4+2\rho+\rho^2)^3 \rho^{\frac{3}{2}}g^{\frac{4}{3}}(\rho+1) ^2\sqrt{k(k-2)m_1m_2m_3} }$$
where
$$g={\frac {-4(2\rho^2+3\rho+2)}{ \left( {\rho }^{5}+3\,{\rho }^{4}+3
\,{\rho }^{3}+3\,{\rho }^{2}+3\,\rho +1 \right) k}}$$
$$P(\rho)=(k+10)\rho^6+(5k+50)\rho^5+(8k+120)\rho^4+(7k+158)\rho^3+(8k+120)\rho^2+(5k+50)\rho+k+10$$

According to \cite{9}, the condition for integrability of the second order variational equations are that some of these third order derivative should vanish. Using the table of \cite{9}, the three first third order derivatives never lead to an integrability condition, but the last one does. In particular, for $k=5,14$, the integrability condition is $D^3V(X_3,X_3,X_3)=0$, and there are none for $k=9$.

The only real positive solution of equation $(\rho-1)P(\rho)=0$ for $k=5,14$ is $\rho=1$. Putting this in the parametrization of $(E_5),(E_{14})$, we obtain that the set of possible masses is given by
$$\left\lbrace \left(\frac{12}{35},\frac{11}{35},\frac{12}{35}\right),\left(\frac{24}{49},\frac{1}{49},\frac{24}{49}\right)\right\rbrace \cup E_9$$
\end{proof}

\subsubsection{At order $3$}

Let us now look at order $3$. We will prove that $V_3$ is never integrable at order $3$ near its unique real central configuration.

\begin{lem}
The potential $V_3$ is never integrable at order $3$ at its unique real central configuration.
\end{lem}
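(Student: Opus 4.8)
The plan is to reduce the statement to a finite set of masses and then apply the higher-variational-equation obstruction from \cite{9} at order $3$. By the previous lemma, integrability at order $2$ already forces the masses to lie in the finite set $\{(12/35,11/35,12/35),(24/49,1/49,24/49)\}\cup E_9$. Since order-$3$ integrability implies order-$2$ integrability, I only need to examine these candidates. Moreover, because $V_3$ is a real potential, I recall (as noted in the text preceding the order-$3$ discussion) that the only integrable cases of \cite{10} surviving to order $5,6$ are complex; so on $E_9$ the relevant bound is really order $4$, and the two isolated symmetric mass triples sit on $E_5$ and $E_{14}$, where \cite{10} gives integrability at most to order $4$. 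Hence in every case order $3$ is in principle still allowed by the first-order spectrum, and the task is genuinely to compute the order-$3$ obstruction and show it never vanishes.

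First I would set up the third variational equation along the homothetic orbit, using the same normalization as in the order-$2$ lemma: the symplectic rescaling $p_i\to\sqrt{m_i}p_i$, $q_i\to q_i/\sqrt{m_i}$ turns the Hessian into $W$, fixes the multiplier to $-1$ and places the center of mass at $0$, and diagonalizes $\nabla^2V(c)$ with spectrum $\{0,2,k\}$ and eigenvectors $X_1,X_2,X_3$. The direction $X_1$ (eigenvalue $0$) decouples as a translation, so the effective reduced potential lives in the plane spanned by $X_2,X_3$. The criterion of \cite{9} at order $3$ is expressed through the third and fourth order derivatives $D^3V$, $D^4V$ evaluated at $c$ and contracted against the eigenvectors; I would tabulate the needed components $D^3V(X_{a},X_{b},X_{c})$ (already computed in the previous lemma) and $D^4V(X_a,X_b,X_c,X_d)$, and then read off from the tables of \cite{9} which specific polynomial combination of these must vanish for the identity component to remain Abelian at order $3$.

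The key computation is then to evaluate that order-$3$ obstruction on each surviving candidate and show it is nonzero. For the two isolated symmetric triples the central configuration has $\rho=1$, which makes $D^3V(X_3,X_3,X_3)=0$ by the factor $(\rho-1)$; so the order-$3$ obstruction there must be driven by the surviving $D^3V(X_2,X_2,X_2)=D^3V(X_3,X_3,X_2)$ terms together with the quartic contractions $D^4V$, and I would verify it is a nonzero rational number for $k=5$ and $k=14$ by direct substitution. On the curve $E_9$ the situation is one-dimensional in $\rho$: here the obstruction becomes an explicit rational function of $\rho$, and I must show it has no real root with $\rho\ge 1$ giving positive masses. I would clear denominators, obtaining a polynomial in $\rho$, and argue it is sign-definite on $[1,\infty)$ either by an explicit factorization or by a positivity certificate (e.g.\ checking that all its coefficients, after the substitution $\rho=1+u$ with $u\ge0$, are positive), exactly the style of reasoning already used for the mass-positivity intervals.

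The main obstacle I anticipate is precisely this last positivity argument on $E_9$: the order-$3$ obstruction from \cite{9} mixes $D^3V$ and $D^4V$ contractions, so after clearing the large common denominators $(1+2\rho^3+\rho^4+2\rho+\rho^2)$ and the mass factors $m_1m_2m_3$ evaluated along $E_9$, one obtains a high-degree polynomial in $\rho$ whose nonvanishing for $\rho\ge1$ is not obvious by inspection. I would handle this either with an explicit Sturm/RAGlib computation (consistent with the paper's reliance on \cite{74}) or by exhibiting the change of variables $\rho=1+u$ that renders all coefficients of the same sign; either route certifies that no real positive-mass point of $E_9$ kills the order-$3$ obstruction, which, combined with the nonvanishing at the two isolated triples, proves that $V_3$ is never integrable at order $3$ at its unique real central configuration.
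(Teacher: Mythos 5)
Your proposal follows essentially the same route as the paper: for $k=5,14$ the order-$2$ constraint forces $\rho=1$, where the order-$3$ obstruction (a combination of $D^3V$ and $D^4V$ contractions, the paper's constants $a,b,c$ from the criterion of \cite{90}) evaluates to an explicit nonzero number, while for $k=9$ the obstruction reduces to a polynomial in $\rho$ shown to have no positive real root — in the paper this is immediate since all its coefficients are positive, a trivial instance of the positivity certificate you propose.
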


\begin{proof}
We will directly use the main Theorem of \cite{90}. After the convenient variable change which send the potential $V_3$ to a planar homogeneous potential with standard kinetic energy, and a rotation dilatation to put the central configuration at $c=(1,0)$, we find that the third order integrability condition can be written
\begin{align*}
-\frac{256}{715}a^2+\frac{13824}{5005}c & =0,\;\; b =0,\;\;k =5\\
-\frac{475136}{57057}a^2-\frac{753664}{101745}b^2+\frac{19759104}{323323}c & =0,\;\;k =9\\
-\frac{2755788800}{7436429}a^2+\frac{19729612800}{7436429}c & =0,\;\; b =0,\;\;k =14
\end{align*}
where the constants $a,b,c$ are
$$a=-\frac{3\sqrt{2\rho^2+3\rho+2}\sqrt{2k}}{(\rho+1)^2g^{\frac{4}{3}}\sqrt{k-2}\rho^{\frac{3}{2}} }, \quad 
b=\frac {-3\sqrt {2}\sqrt {2\,{\rho}^{2}+3\,\rho+2}(\rho-1)P(\rho) }{(1+2\rho^3+\rho^4+2\rho+\rho^2)^3 \rho^{\frac{3}{2}}g^{\frac{4}{3}}(\rho+1) ^2\sqrt{k(k-2)m_1m_2m_3} }$$
$$c=F(\rho,k)$$

%
%
%
%
%


where $F$ is a rational fraction in $\rho,k$ and
$$g=\frac {-4(2\rho^2+3\rho +2)}{ \left( {\rho }^{5}+3\,{\rho }^{4}+3
\,{\rho }^{3}+3\,{\rho }^{2}+3\,\rho +1 \right) k}$$
The constraint $b=0$ for $k=5,14$ comes from order $2$, and we already know the the unique solution is $\rho=1$. The other constraint gives
$$\frac{3024672}{1573}7^{\frac{2}{3}}\neq 0\qquad \frac{2137106227200}{96577}7^{\frac{2}{3}}\neq 0$$
for $k=5,14$ respectively. For $k=9$, the third order integrability constraint is
\begin{align*}
179523957+1436191656\,\rho+5144769684\,{\rho}^{2}+11297844542\,{\rho}^{3}+17938383865\,{\rho}^{4}+\\
23104821764\,{\rho}^{5}+25814403801\,{\rho}^{6}+26361946842\,{\rho}^{7}+25814403801\,{\rho}^{8}+23104821764\,{\rho}^{9}+\\
17938383865\,{\rho}^{10}+11297844542\,{\rho}^{11}+5144769684\,{\rho}^{12}+1436191656\,{\rho}^{13}+179523957\,{\rho}^{14}=0
\end{align*}
This polynomial has no real positive root, and so the constraint is never satisfied.
\end{proof}

\begin{rem}
One could compute the third order integrability condition for any curve $(E_k)$, and even test if this condition could be satisfied thanks to the holonomic approach of third order variational equations in \cite{90}. Here the restriction $(m_1,m_2,m_3)\in {\mathbb{R}_+^*}^3$ is only for physical reasons, but a more complete study is possible. Still remark that this constraint has allowed us to easily bound the eigenvalues, and then to study integrability near the unique real central configuration. If one would allow negative masses, or even complex masses, some results are no longer valid. Especially, there are complex masses which possess a non degenerate central configuration which is integrable at order $3$.
\end{rem}

\begin{figure}
\includegraphics[width=6.5cm]{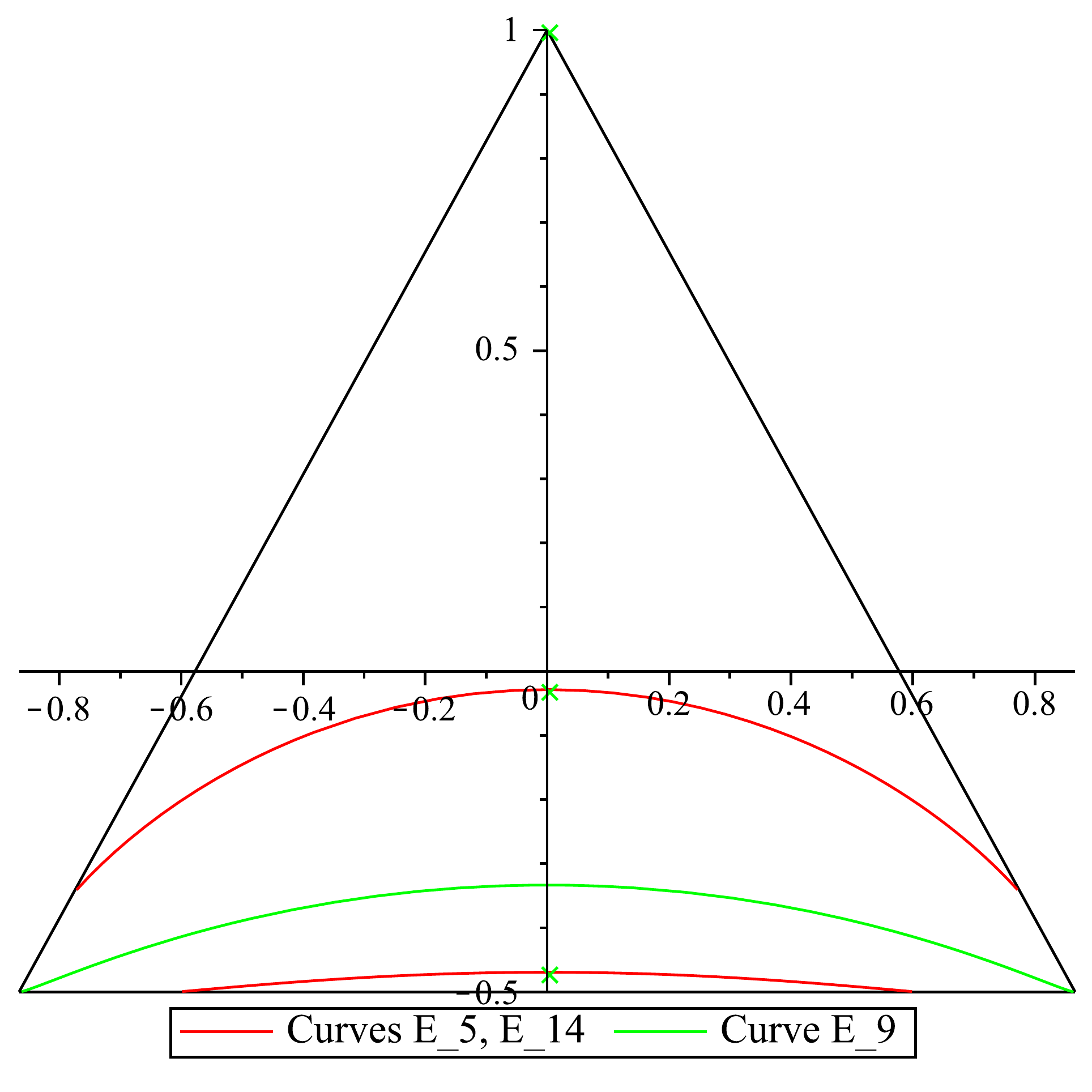}
\includegraphics[width=6.5cm]{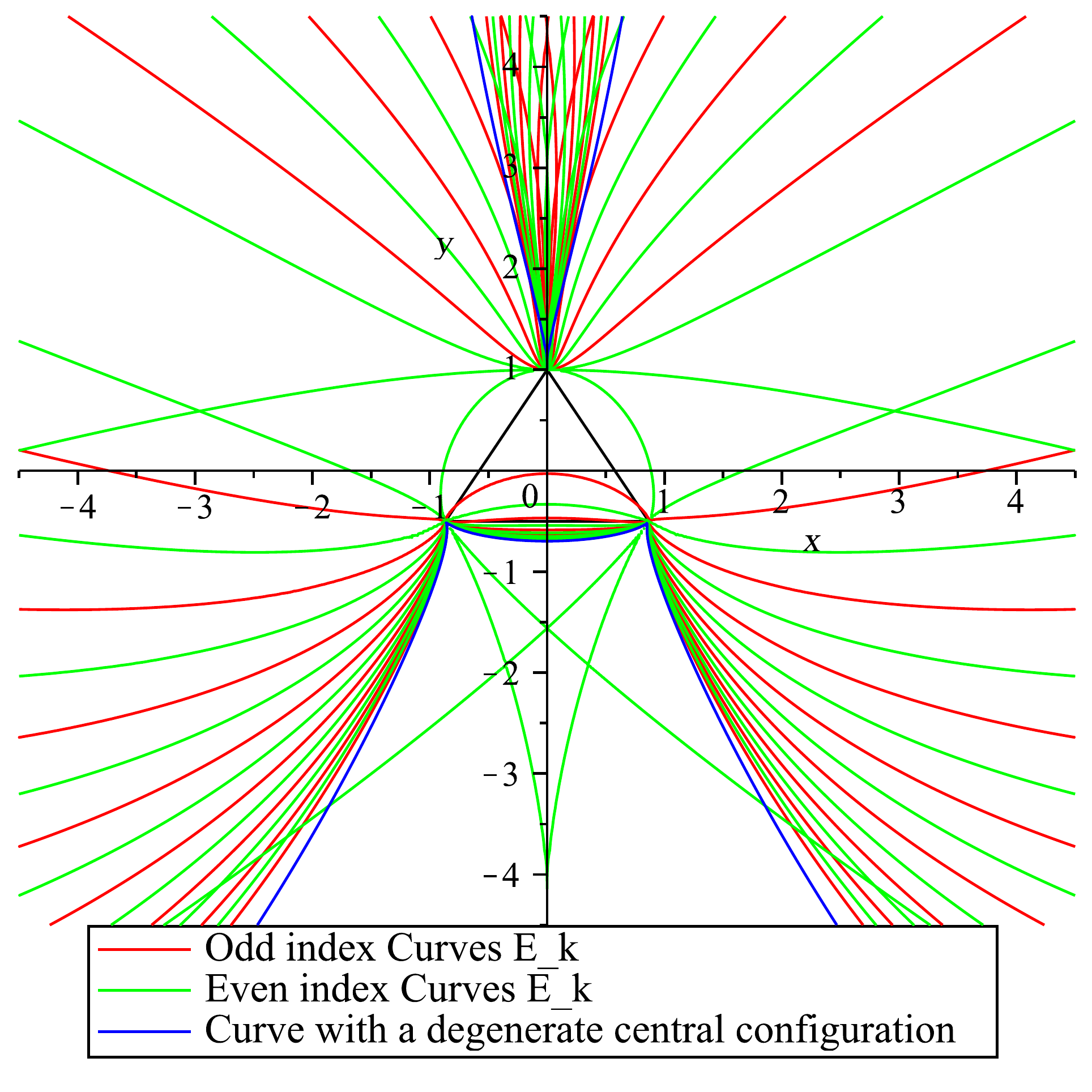}\\
\caption{Graph of the masses having a first order variational equation with a Galois group whose identity component is Abelian. The masses are represented in barycentric coordinates. The masses inside the black triangle are positive. Drawing the curves outside the positive masses reveals that the curves $(E_k)$ accumulate on the curve $(E_{\infty})$. They also intersect on the points $(m_1,m_2,m_3)=(1,0,0),(0,1,0),(0,0,1)$ which are the integrable cases (at the limit when the masses are going to zero through a limiting process).}
\end{figure}


\section{The $4$ body problem}

The previous approach for non integrability proofs can be extented for more complicated systems, as the $4$-body problem, for which a direct approach would be impossible due to the high number of central configurations. The difficulty of the problem of finding these central configurations is famous \cite{93}, thus we will try to need the fewest possible informations on them. The most important quantity are the set of possible eigenvalues of Hessian matrices at the unique real central configuration. In particular, if this set is finite, then the classification approach of \cite{10} is possible.

\subsection{Eigenvalue bounding}

Following the method presented in \cite{10}, we will first try to prove a bound on eigenvalues of the Hessian matrices at Darboux points of $V_{4,1}$. As in \cite{10}, the potential were planar, we need to operate a little differently. Instead of trying to bound directly these eigevalues (whose expression could be domplicated as they appear as roots of the characteristic polynomial), we simply bound the trace of the Hessian matrix. Indeed, the eigevalues of the Hessian matric are of the form $\{0,2,\lambda_1,\lambda_2\}$, and so bounding the trace gives a bound on $\lambda_1+\lambda_2$. Moreover, thanks to Theorem \ref{thmmorales}, we already know that for integrability we must have $\lambda_1,\lambda_2\geq -1$, and thus we get also a bound on $\lambda_1,\lambda_2$.

\begin{thm}
We consider the colinear $4$ body problem with positive masses, whose potential is given by $V_{4,1}$. Let $c$ be the real central configuration (exitence and unicity up to translation due to \ref{thmmoult}) with multiplier $-1$. Let $W\in M_4(\mathbb{C})$ be the matrix
$$W_{i,j}=\frac{1} {m_i} \frac{\partial^2} {\partial q_i \partial q_j} V \quad i,j=1\dots 4$$
Then $tr(W) <70$.
\end{thm}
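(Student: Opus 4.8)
The plan is to follow the strategy of Section 2, reducing the computation of $\mathrm{tr}(W)$ to a constrained optimization over a semialgebraic set and then certifying the bound with RAGlib \cite{74}. First I would exploit the translation and scaling invariance to normalize the colinear configuration to $c=(-1,0,\rho_1,\rho_2)$ with two shape parameters, ordered so that positivity of the masses is possible. The key observation is that, exactly as in the Euler equation \eqref{eqeuler} of Proposition \ref{thmeuler}, the central configuration equations $\tfrac{1}{m_i}\partial_{q_i}V=\alpha(q_i-g)$ are \emph{linear} in the masses once one divides by $m_i$; eliminating the multiplier $\alpha$ and the center of mass $g$ leaves $n-2=2$ reduced equations. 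Adjoining the normalization $m_1+m_2+m_3+m_4=1$ gives a linear system of rank $3$ in the four masses, so for fixed shape the admissible masses form a $1$-parameter affine family, which I would parametrize by a scalar $s$ as in Proposition \ref{thm0}.

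Next I would express the trace explicitly. Since $\partial^2_{q_i}(1/|q_i-q_j|)=2/|q_i-q_j|^3$, the definition \eqref{eqW} and the invariance of the trace under the mass-normalizing similarity give the closed form
\begin{equation*}
\mathrm{tr}(W)=\sum_{i=1}^4\frac{1}{m_i}\frac{\partial^2 V}{\partial q_i^2}(c)=2\sum_{1\le i<j\le 4}\frac{m_i+m_j}{|q_i-q_j|^3},
\end{equation*}
a rational function of the shape parameters, the masses, and the overall scale. Both the multiplier of $c$ and $\mathrm{tr}(W)$ scale like $\lambda^{-3}$ under $c\mapsto\lambda c$, so the requirement $\alpha=-1$ fixes the scale (just as the relation determining $\gamma^3$ does for $n=3$), and substituting it turns $\mathrm{tr}(W)$ into an explicit rational function $T(\rho_1,\rho_2,s)$. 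Because $c$ and the translation direction are eigenvectors of $W$ with eigenvalues $2$ and $0$, one has $\mathrm{tr}(W)=2+\lambda_1+\lambda_2$; hence the bound $\mathrm{tr}(W)<70$ forces $\lambda_1+\lambda_2<68$, and combined with $\lambda_i\ge -1$ (Theorem \ref{thmmorales}) it confines $\lambda_1,\lambda_2$ to a finite admissible list below $69$, which is exactly what makes the classification approach of \cite{10} applicable.

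It then remains to prove $T<70$ on the semialgebraic set $S$ cut out by $m_i(\rho_1,\rho_2,s)>0$ and the admissible ordering of the positions. I would feed this to RAGlib: after clearing denominators, one shows that the polynomial system $T=70$ together with the defining inequalities of $S$ has no real solution, equivalently that $\{T\ge 70\}\cap S=\varnothing$, by computing at least one sample point in each connected component of the relevant real variety. The main obstacle is that $S$ is non-compact: its boundary contains the collision limits $|q_i-q_j|\to 0$ and the vanishing-mass limits $m_i\to 0$, where individual pair terms of $T$ can a priori blow up. I would control this by checking that the $\alpha=-1$ normalization forces the scale to compensate these degenerations, so that $T$ extends continuously with a finite, strictly subcritical limit on each boundary stratum (mirroring the $n=3$ case, where the two-body value $2$ is recovered as two masses tend to $0$), and by verifying that no interior critical point of $T$ on $S$ reaches $70$. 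Ensuring that the RAGlib computation captures the genuine supremum over this non-compact region, rather than only the interior critical values, is the delicate point and would be settled by treating each boundary stratum separately.
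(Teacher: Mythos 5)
Your overall framework --- fix the colinear shape, use the linearity of the central-configuration equations in the masses to get a one-parameter affine family, write $tr(W)$ explicitly, and certify the bound with RAGlib --- is the same as the paper's, and your closed form $tr(W)=2\sum_{i<j}(m_i+m_j)/|q_i-q_j|^3$ is correct. But you miss the structural step that makes the paper's proof work, and your treatment of the non-compactness rests on a premise that is false. The paper does \emph{not} substitute a scale factor $\gamma$ into the trace as in the $n=3$ computation; it keeps the configuration $c=(-\rho_1,-1,1,\rho_2)$ fixed and lets the multiplier-$(-1)$ condition fix the scale of the masses instead (this is legitimate because $tr(W)$ at the multiplier-$(-1)$ configuration is invariant under the simultaneous rescaling $m\mapsto\lambda m$, $c\mapsto\lambda^{1/3}c$). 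With that choice, $tr(W)$ is \emph{affine} in the free parameter $m_3$, so for fixed $(\rho_1,\rho_2)$ its maximum over the interval $S_{\rho_1,\rho_2}$ cut out by $J_i>0$ is attained at an endpoint, where one of the masses vanishes. This collapses the three-variable optimization to bounding four two-variable functions $M_i(\rho_1,\rho_2)$, which is precisely what makes the RAGlib certification tractable. Your substitution of the scale destroys this: as in the $n=3$ case, where the post-substitution eigenvalue $G(s,\rho)$ is fractional linear rather than affine in $s$, you are left with a genuinely three-dimensional, high-degree problem fed raw to RAGlib, with no reduction to the boundary.

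More seriously, your claim that the $\alpha=-1$ normalization forces $T$ to extend ``continuously with a finite, strictly subcritical limit on each boundary stratum'' is exactly what fails. The trace restricted to the stratum where the first mass vanishes --- the function $M_1$ --- is \emph{unbounded} as $\rho_1\to\infty$ (this is visible in the paper's graphs of the $M_i$). The bound $tr(W)<70$ survives only because, for $\rho_1\geq 5$, the solution of $J_1=0$ has $m_3=D(\rho_1,\rho_2)\leq 0$, so this stratum meets the closure of the positive-mass region only where $m_3=0$, at which point $M_1=M_3\leq 69.9$; that is the content of Lemma \ref{lem2}, and it requires its own RAGlib certification. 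Without an argument of this kind, your boundary-stratum analysis cannot close the proof: the dangerous behaviour is not a removable degeneration compensated by the scale normalization (as in the two-body limit for $n=3$), but a genuine blow-up of the trace along a mass-vanishing stratum, which has to be excluded by the positivity of the \emph{remaining} masses. A minor additional point: showing that $T=70$ has no solution in $S$ only yields $T<70$ on $S$ after a connectedness or sample-point argument; one should directly certify emptiness of the semialgebraic set $\{T\geq 70\}\cap S$, which is how the paper's Lemmas \ref{lem1} and \ref{lem2} are organized.
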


This value is not the optimal one which has a complicated algebraic expression. Still considering a better bound than this one is not useful as it will not allow us to reduce the number of exceptional cases we will have to deal with.

\begin{proof}
We first remark that after translation, dilatation and changing the order of \textbf{all the masses}, a central configuration of $V_4$ can always be written under the form $c=(-\rho_1,-1,1,\rho_2)$ with $\rho_1\geq \rho_2 > 1$. Moreover, thanks to Moulton Theorem \ref{thmmoult}, we also now that for any fixed positive masses, there always exists a unique central configuration. So we will first fix our central configuration $c=(-\rho_1,-1,1,\rho_2)$, and then compute the masses for which $c$ is a central configuration. Moreover, we will assume that $m_1+m_2+m_3+m_4=1$ because multiplying all the masses by a constant does not change the trace of the matrix $W$.

The equation of central configurations is a linear system in the masses, with $3$ equations for $4$ unknowns. The solution is of the form
$$(m_1,m_2,m_3,m_4)=\left(J_1(\rho_1,\rho_2,m_3),J_2(\rho_1,\rho_2,m_3),J_3(\rho_1,\rho_2,m_3),J_4(\rho_1,\rho_2,m_3)\right) $$
where $J_i$ are rational in $\rho_1,\rho_2$ and affine in $m_3$ (and $J_3(\rho_1,\rho_2,m_3)=m_3$). Now we compute the trace of matrix $W$, and we obtain that $tr(W)$ is also rational in $\rho_1,\rho_2$ and affine in $m_3$.
\begin{lem}
The functions $J_i$ have no singularities for $\rho_1\geq \rho_2 > 1$, and their coefficient in $m_3$ does not vanish for $\rho_1\geq \rho_2 > 1$.
\end{lem}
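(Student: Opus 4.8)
The plan is to make the structure of the $J_i$ fully explicit, since the claim is about the denominators of the $J_i$ and about the coefficient of $m_3$ in each $J_i$. First I would write down the linear central configuration system for $c=(-\rho_1,-1,1,\rho_2)$. As explained before the Lemma, the equation $\nabla V_{4,1}(c)=\alpha c$ together with the constraint $m_1+m_2+m_3+m_4=1$ is a linear system in the masses; eliminating the multiplier $\alpha$ leaves three equations in four unknowns, whose coefficients are rational in $\rho_1,\rho_2$. Solving this $3\times 4$ system with $m_3$ kept as a free parameter expresses $m_1,m_2,m_4$ as affine functions of $m_3$ whose coefficients are rational functions of $\rho_1,\rho_2$; the $J_i$ are precisely these expressions (with $J_3\equiv m_3$). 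The key observation is that all of these rational functions share a \emph{common denominator} $D(\rho_1,\rho_2)$ coming from the determinant of the relevant $3\times 3$ minor of the system.

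The bulk of the work is then to show two things on the region $\rho_1\ge\rho_2>1$: that $D(\rho_1,\rho_2)$ does not vanish, and that the coefficient of $m_3$ in each $J_i$ does not vanish. Both of these are statements that a concrete polynomial in $\rho_1,\rho_2$ has no zero on the closed region $\{\rho_1\ge\rho_2>1\}$. My approach would be to compute these polynomials explicitly (they are built from the mutual distances $|c_i-c_j|$, which are sums such as $\rho_1-1,\ \rho_1+1,\ \rho_1+\rho_2,\ 2,\ 1+\rho_2,\ \rho_2-1$, all strictly positive on the region), and then argue that each is a polynomial with \emph{all strictly positive coefficients} in the shifted variables $u=\rho_1-1\ge 0$, $v=\rho_2-1>0$, or more generally that it is manifestly sign-definite there. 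The denominator $D$ is essentially a product (or sum of products) of the positive interparticle distances cubed, so its non-vanishing is the easiest part: every factor is a strictly positive quantity on $\rho_1\ge\rho_2>1$.

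The genuinely delicate point is the non-vanishing of the $m_3$-coefficients, since there is no a priori sign reason for an off-diagonal combination of $1/r^3$ terms to be one-signed. The strategy I would use is the same real-algebraic-geometry verification invoked elsewhere in the paper: reduce the claim to showing that a finite list of explicit polynomials $N_i(\rho_1,\rho_2)$ (the numerators of the $m_3$-coefficients of $J_i$) have no common real zero with the semialgebraic constraints $\rho_1-\rho_2\ge 0$, $\rho_2-1>0$, and use RAGlib \cite{74} to certify emptiness of each real variety. This sidesteps any ad hoc positivity argument and is guaranteed to terminate. I expect this certification step — controlling the signs of the $N_i$ uniformly over the two-dimensional region — to be the main obstacle, because unlike the denominator the numerators need not factor into positive distances; the saving grace is that we only need non-vanishing (not a sign), so a single RAGlib emptiness test per $J_i$ suffices.
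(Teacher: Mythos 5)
Your overall strategy matches the paper's: the paper's proof is a one-line computation-plus-certification, namely it builds a single polynomial whose factors are the denominators of the $J_i$ and the numerators of their coefficients in $m_3$, and checks (with RAGlib) that this polynomial has no real zero on $\{\rho_1\geq\rho_2>1\}$. Your treatment of the $m_3$-coefficients --- explicit numerators $N_i$, then one RAGlib emptiness test per $N_i$ over the semialgebraic region --- is exactly that, merely split into several certifications instead of one.

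The one step that would fail as written is your shortcut for the denominator. $D(\rho_1,\rho_2)$ is \emph{not} ``essentially a product of the positive interparticle distances cubed'': after clearing the $1/r_{ij}^2$ entries, it is the determinant of a $3\times 3$ minor of the central-configuration system, i.e.\ a \emph{signed} sum of products of such terms, and there is no structural reason for it to factor into distances or to be manifestly one-signed. The $3$-body computation in Proposition \ref{thm0} already illustrates this: there the common denominator is $\rho(1+2\rho+\rho^2+2\rho^3+\rho^4)$, and the quintic factor is not a product of powers of the distances $1$, $\rho$, $\rho+1$ --- its positivity on $\rho>0$ is read off from the computed coefficients, not deduced from the distance geometry. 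For the same reason, positivity of all coefficients in the shifted variables $u=\rho_1-1$, $v=\rho_2-1$ is something you can only hope to observe a posteriori, not assert in advance. The repair is immediate and is precisely what the paper does: treat the denominator on the same footing as the numerators, including it as one more factor in the polynomial handed to RAGlib. With that change, your proof coincides with the paper's.
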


\begin{proof}
We simply build a polynomial whose factors are the denominators of the functions $J_i$ and numerators of the coefficient in $m_3$ of the functions $J_i$. This polynomial has no real solutions for $\rho_1\geq \rho_2 > 1$.
\end{proof}

So we can handle safely these $J_i$, and solve equations of the form $J_i=0$ in $m_3$ without dealing with singular cases. We need to prove
$$ \max\limits_{J_i>0,\; i=1\dots 4,\;\;\rho_1\geq \rho_2 > 1} tr(W) < 70$$
Let us now remark that for fixed $\rho_1\geq \rho_2 > 1$, the function $tr(W)$ in $m_3$ on the set
$$S_{\rho_1,\rho_2}=\{ m_3\in\mathbb{R} ,J_i(\rho_1,\rho_2,m_3)>0,\; i=1\dots 4\}$$
has its maximum on the boundary of $S_{\rho_1,\rho_2}$ (because $tr(W)$ is affine in $m_3$). So for fixed $\rho_1\geq \rho_2 > 1$, the maximum on the possible $m_3$ has $4$ possible values
$$M_i(\rho_1,\rho_2)= tr(W)(\rho_1,\rho_2,J_i(\rho_1,\rho_2,\cdot)^{-1}(0)) \quad i=1\dots 4$$

Let us now prove the following Lemma

\begin{lem}\label{lem1}
The following bounds hold
$$ M_2(\rho_1,\rho_2)\leq 69.9 \quad  M_3(\rho_1,\rho_2)\leq 69.9 \quad  M_4(\rho_1,\rho_2)\leq 69.9 \qquad \forall\; \rho_1\geq \rho_2 > 1$$
$$ M_1(\rho_1,\rho_2)\leq 69.9 \qquad \forall\; \rho_1\geq \rho_2 > 1,\;\; \rho_1\leq 5$$
\end{lem}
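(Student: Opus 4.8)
The plan is to reduce each of the four inequalities to a sign condition on an explicit rational function of the two variables $\rho_1,\rho_2$, and then to decide that sign condition by the critical point method of real algebraic geometry implemented in RAGlib. First I would make the $M_i$ explicit. Since $J_i(\rho_1,\rho_2,m_3)$ is affine in $m_3$ with a leading coefficient that does not vanish on the region (the preceding Lemma), the equation $J_i=0$ has a unique root $m_3^{(i)}(\rho_1,\rho_2)$, rational in $\rho_1,\rho_2$; substituting it into $tr(W)$, which is itself affine in $m_3$, gives
$$M_i(\rho_1,\rho_2)=tr(W)\bigl(\rho_1,\rho_2,m_3^{(i)}(\rho_1,\rho_2)\bigr),$$
a rational function whose denominator is, up to the denominators of the $J_i$ already controlled above, a power of the leading coefficient of $J_i$ in $m_3$, hence nonvanishing for $\rho_1\geq\rho_2>1$.

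Next I would write $69.9-M_i=N_i/D_i$ with $N_i,D_i$ polynomial. The region $\{\rho_1\geq\rho_2>1\}$ is convex, hence connected, and $D_i$ does not vanish on it, so $D_i$ has a constant sign there, which I fix by a single evaluation, say at the symmetric point $\rho_1=\rho_2=2$. The bound $M_i\leq 69.9$ then amounts to a prescribed sign $\varepsilon_i$ of the polynomial $N_i$ on the semialgebraic set, equivalently to the emptiness of
$$\{(\rho_1,\rho_2):\rho_1\geq\rho_2>1,\ \varepsilon_i N_i(\rho_1,\rho_2)<0\}.$$
This is exactly the kind of statement RAGlib settles, by returning at least one sample point in each connected component of that open set and checking there is none. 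Equivalently, one lists the finitely many candidate maximizers of $M_i$ on the closure of the region — the interior critical points solving $\partial_{\rho_1}M_i=\partial_{\rho_2}M_i=0$, the one dimensional extrema on the boundary arcs $\rho_1=\rho_2$ and $\rho_2=1$ (and, for $M_1$, on $\rho_1=5$), and the limiting values as $\rho_1\to+\infty$ — evaluates $M_i$ at each, and verifies the largest is $<69.9$.

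The delicate points are the unboundedness of the region and the special behaviour of $M_1$. For $M_2,M_3,M_4$ the bound is claimed on all of $\rho_1\geq\rho_2>1$, so I must control the joint limit $\rho_1\to+\infty$, $\rho_2\to 1^+$: after clearing denominators one can homogenize and test the sign of $N_i$ at infinity, or substitute $\rho_1=\rho_2+u,\ \rho_2=1+v$ with $u,v\geq 0$ and bound the resulting function together with its limits as $u\to\infty$. For $M_1$ the bound genuinely fails for large $\rho_1$ — this is precisely why the statement is restricted to $\rho_1\leq 5$ — so the computation is performed on the compact box $\{1<\rho_2\leq\rho_1\leq 5\}$, the range $\rho_1>5$ being treated separately afterwards. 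The main obstacle I expect is computational rather than conceptual: the $M_i$ have high degree in $\rho_1,\rho_2$, so the elimination underlying the critical point search is heavy, and each sample point returned must be certified to lie strictly inside the region and to give a value safely below the threshold. This is exactly why a non optimal round bound such as $69.9$, rather than the exact algebraic maximum, is used: it leaves the numerical margin needed to make the certification robust.
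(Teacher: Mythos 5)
Your proposal is correct and follows essentially the same route as the paper, whose entire proof of this lemma is the single sentence that the inequalities are ``automatically proved using RAGlib''; your write-up simply makes explicit the reduction (rational $M_i$ via the nonvanishing leading coefficients of the $J_i$, clearing denominators, sign certification on the semialgebraic region, and the separate treatment of $M_1$ on $\rho_1\leq 5$) that such a RAGlib invocation presupposes.
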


\begin{proof}
These inequalities are automatically proved using RAGlib.
\end{proof}

\begin{lem}\label{lem2}
If $\rho_1\geq 5$, $\rho_1\geq \rho_2 > 1$, then
$$ \max\limits_{m_3\in S_{\rho_1,\rho_2}} tr(W) \in \{M_2(\rho_1,\rho_2), M_3(\rho_1,\rho_2), M_4(\rho_1,\rho_2)\} $$
\end{lem}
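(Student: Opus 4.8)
The plan is to exploit the affine dependence of $tr(W)$ on $m_3$ together with the geometry of the feasible interval, reducing the statement to a sign condition on the rational functions $J_i$ that can be certified with RAGlib. Since $tr(W)$ is affine in $m_3$ and $S_{\rho_1,\rho_2}$ is the intersection of the four half-lines $\{J_i>0\}$, the set $S_{\rho_1,\rho_2}$ is a bounded open interval (the masses sum to $1$ and are positive, so $m_3\in(0,1)$), and the maximum of the continuous affine function $tr(W)$ over its closure is attained at one of the two endpoints. Each endpoint is a zero of one of the $J_i$, and the value of $tr(W)$ there is precisely the corresponding $M_i(\rho_1,\rho_2)$. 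Hence proving the Lemma amounts to showing that for $\rho_1\geq 5$ the constraint $J_1=0$ is \emph{not} the one binding at the maximizing endpoint.

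The cleanest route I would try first is to show that on the region $\rho_1\geq 5,\ \rho_1\geq\rho_2>1$ the constraint $J_1>0$ is redundant, i.e. it is already implied by $J_2,J_3,J_4>0$. Concretely I would verify with RAGlib that the semialgebraic set
\[
\{(\rho_1,\rho_2,m_3):\ \rho_1\geq 5,\ \rho_1\geq\rho_2\geq 1,\ J_2\geq 0,\ J_3\geq 0,\ J_4\geq 0,\ J_1<0\}
\]
is empty. By the previous Lemma the denominators of the $J_i$ do not vanish and keep a fixed sign on $\rho_1\geq\rho_2>1$, so after clearing denominators this is a purely polynomial emptiness test. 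If it holds, then $S_{\rho_1,\rho_2}=\{J_2,J_3,J_4>0\}$, both endpoints of the interval are zeros of $J_2,J_3$ or $J_4$, and the maximum of $tr(W)$ is therefore one of $M_2,M_3,M_4$, as claimed. This also explains the threshold $\rho_1=5$: for $\rho_1\leq 5$ the positivity of $m_1$ may be binding, which is exactly why $M_1$ had to be bounded separately there in Lemma \ref{lem1}, whereas for $\rho_1\geq 5$ it becomes inactive.

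Should $J_1$ turn out to vanish on part of $S_{\rho_1,\rho_2}$, I would fall back on a slope argument. I would compute the single rational function $\partial_{m_3}tr(W)$ and certify, again with RAGlib, that it has constant sign on $\rho_1\geq 5,\ \rho_1\geq\rho_2>1$; this fixes which endpoint ($m_3^-$ or $m_3^+$) maximizes $tr(W)$. It then suffices to show that the zero $m_3^{(1)}$ of $J_1$ lies on the opposite, minimizing side of the interval, which is once more a comparison of affine zeros $m_3^{(i)}$ and hence a semialgebraic inequality amenable to the same tools.

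The step I expect to be the main obstacle is the emptiness (or sign) verification over an \emph{unbounded} two–parameter region, since RAGlib is most comfortable on bounded sets. Deciding emptiness of a semialgebraic set does not in principle require compactness, but in practice the sizes of the polynomials obtained after clearing the denominators of the $J_i$ and of $tr(W)$ are large; to keep the computation tractable and to control the behavior as $\rho_1\to\infty$ and $\rho_2\to 1^+$, I would normalize the parameters, for instance writing $\rho_1=5+u$ with $u\geq 0$ and $\rho_2=1+v(\rho_1-1)$ with $v\in(0,1]$, and treat the limiting leading-order terms in $u$ separately to be sure no solution escapes to infinity.
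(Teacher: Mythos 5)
Your reduction---$tr(W)$ affine in $m_3$, $S_{\rho_1,\rho_2}$ a bounded interval, maximum attained at an endpoint where some $J_i$ vanishes---is exactly the paper's setup, but your certification routes differ from the one the paper actually uses. The paper argues by contradiction at the maximizing endpoint: if the maximum were $M_1$, the masses at the $J_1$-endpoint must be non-negative, in particular $m_3=D(\rho_1,\rho_2)\geq 0$, where $D$ is the root in $m_3$ of $J_1$; a single bivariate RAGlib certification $D(\rho_1,\rho_2)\leq 0$ on $\rho_1\geq 5,\ \rho_1\geq\rho_2>1$ then forces $D=0$, so the $J_1$-endpoint coincides with the $J_3$-endpoint ($J_3=m_3$) and $M_1=M_3$. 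Your fallback route contains precisely this ingredient (the zero of $J_1$ lies below the feasible interval, which sits in $m_3>0$), and once you have it the slope certification is superfluous: whichever endpoint maximizes, neither can be a $J_1$-zero except in the degenerate case $D=0$, which lands on $M_3$ anyway. Indeed the slope step cannot work as stated, since your region contains the diagonal $\rho_1=\rho_2$, where the paper notes the $m_3$-coefficient of $tr(W)$ vanishes identically (there $tr(W)$ is constant in $m_3$ and the lemma is trivial, but this case must be split off). Your primary route certifies a strictly stronger statement---redundancy of the constraint $m_1>0$ given the others---which is \emph{not} implied by the truth of the lemma: it could fail at parameters where $S_{\rho_1,\rho_2}=\emptyset$ (lemma vacuous) while $\{J_2,J_3,J_4\geq 0\}$ is non-empty with $J_1<0$ on it, and it is a three-variable emptiness test, computationally heavier than the paper's single two-variable inequality; also, including $\rho_2=1$ in the test region touches collision configurations where the $J_i$ are singular. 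If the redundancy certification does succeed, it buys a structural explanation of the threshold $\rho_1=5$ that the paper never articulates, which is a genuine plus. Finally, the unboundedness worry is not a real obstacle: RAGlib's critical-point methods decide emptiness and sign conditions over unbounded real semialgebraic sets, and the paper runs its certification directly on the unbounded region without any reparametrization.
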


\begin{proof}
We set $\rho_1\geq 5$ with $\rho_1\geq \rho_2 > 1$. Assume now that $S_{\rho_1,\rho_2}\neq \emptyset $ and $M_1(\rho_1,\rho_2)$ is the maximum of $tr(W)$ on $S_{\rho_1,\rho_2}$. Then the corresponding masses $(m_1,m_2,m_3,m_4)$ should be all non-negative (recall that the maximum could be reached at the boundary of the domain of positive masses, so for non-negative masses). Solving equation $J_1(\rho_1,\rho_2,m_3)=0$ in $m_3$, we get a rational fraction $D$ in $\rho_1,\rho_2$. We now prove using RAGlib that 
$$D(\rho_1,\rho_2)\leq 0 \qquad \forall \rho_1\geq 5, \rho_1\geq \rho_2 > 1$$
So the only possibility left for having all non-negative masses is that $m_3=D=0$. This implies that $M_1(\rho_1,\rho_2)=M_3(\rho_1,\rho_2)$ and so the Lemma follows.
\end{proof}

Using Lemma \ref{lem2}, we know that if $\rho_1\geq 5$, $\rho_1\geq \rho_2 > 1$, the maximum $M_2$, $M_3$ or $M_4$. These are bounded by $69.9$ thanks to Lemma \ref{lem1}. For $5\geq \rho_1\geq \rho_2> 1$, the maximum of $tr(W)$ can be any of the $M_i$, but due to Lemma \ref{lem1}, all of these are then bounded by $69.9$. So
$$ \max\limits_{J_i>0,\; i=1\dots 4,\;\;\rho_1\geq \rho_2 > 1} tr(W) < 70$$

\end{proof}

\begin{figure}
\includegraphics[width=6.5cm]{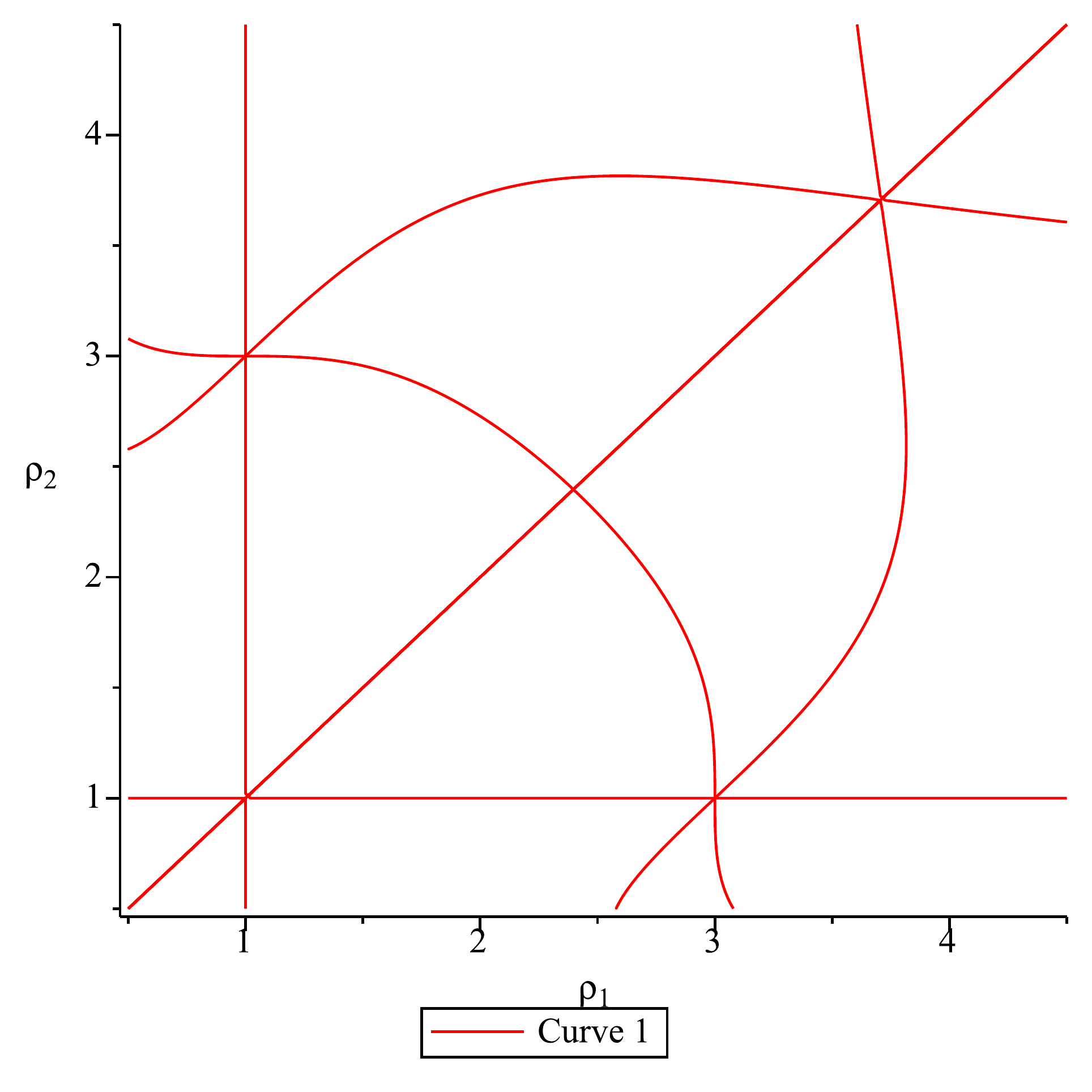}
\caption{Diagramm of the bifurcations between the index $i$ of the $M_i$ that realize the maximum of $tr(W)$. The index $i(\rho_1,\rho_2)$ can only change on one of these curves. Moreover there exists a zone (near $\rho_1,\rho_2=1$) where the set $S_{\rho_1,\rho_2}$ is empty. Numerical analysis gives a maximum around $69.74$.}
\end{figure}

\begin{figure}
\includegraphics[width=6.5cm]{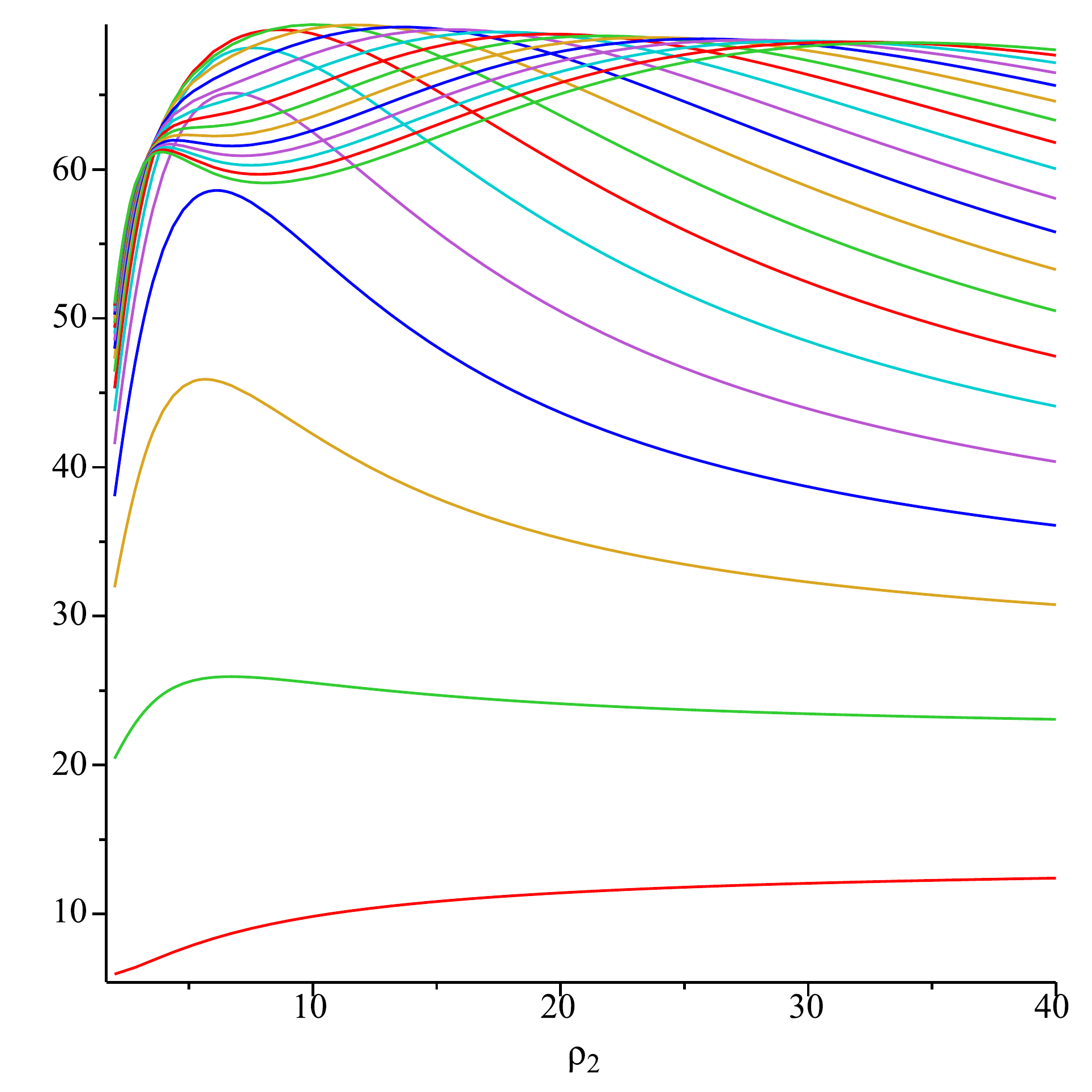}
\includegraphics[width=6.5cm]{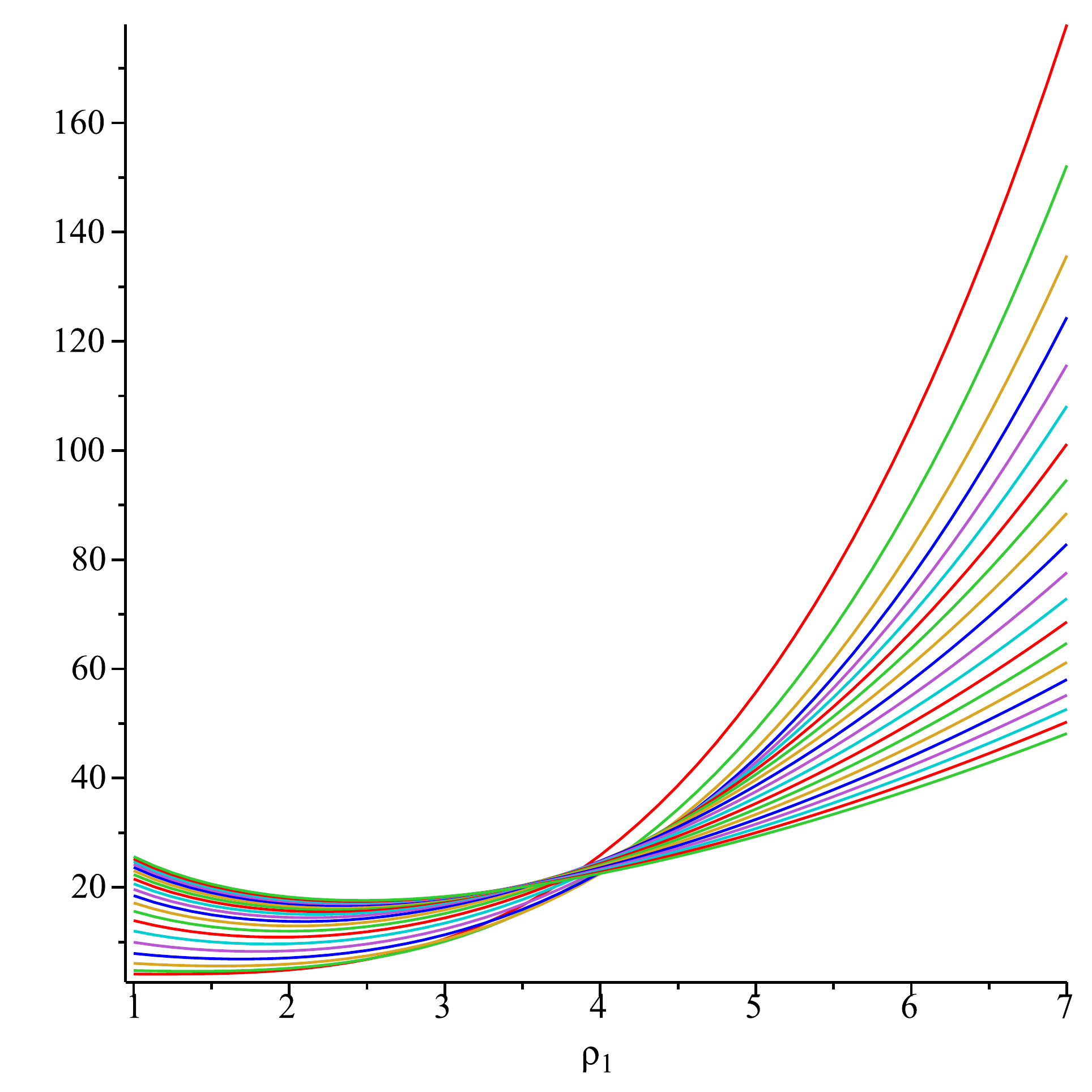}
\caption{Graph of the functions $M_i,i=1\dots 4$. We see that $M_2,M_3,M_4$ are bounded but not $M_1$. This is why we proved that the curve $M_1$ has only to be considered for $\rho_1\leq 5$, allowing to bound the function.}
\end{figure}

\subsection{Symmetric central configurations}

For symmetric central configurations, several cases are possible which are not possible in the non-symmetric case. So we will analyze in this part the case where the real central configuration is of the form $(-\rho,-1,1,\rho)$.

\begin{lem}\label{lemtrace}
The function $tr(W)$ has no singularities for $\rho_1> \rho_2 > 1$, and its coefficient in $m_3$ does not vanish for $\rho_1> \rho_2 > 1$.
\end{lem}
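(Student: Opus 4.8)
The plan is to reduce the statement to the same kind of real-algebraic certification that was used for the functions $J_i$ in the preceding lemma, after first exploiting the special structure of $tr(W)$. For the configuration $c=(-\rho_1,-1,1,\rho_2)$, a diagonal entry of $W$ is
$$W_{ii}=\frac{1}{m_i}\frac{\partial^2 V}{\partial q_i^2}(c)=\frac{1}{m_i}\sum_{j\neq i}\frac{2m_im_j}{|c_i-c_j|^3}=\sum_{j\neq i}\frac{2m_j}{|c_i-c_j|^3},$$
so the factor $1/m_i$ cancels the $m_i$ produced by the second derivative. Summing over $i$,
$$tr(W)=2\sum_{i\neq j}\frac{m_j}{|c_i-c_j|^3}=2\sum_{j=1}^4 m_j\,S_j,\qquad S_j=\sum_{i\neq j}\frac{1}{|c_i-c_j|^3}.$$
Thus $tr(W)$ is \emph{linear} in the masses and, crucially, contains no division by any mass. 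Substituting the central configuration parametrization $m_j=J_j(\rho_1,\rho_2,m_3)$, which is rational in $\rho_1,\rho_2$ and affine in $m_3$, turns $tr(W)$ into a function that is rational in $\rho_1,\rho_2$ and affine in $m_3$.

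Next I would locate every possible pole. For $c=(-\rho_1,-1,1,\rho_2)$ the six pairwise distances are
$$\rho_1-1,\quad \rho_1+1,\quad \rho_1+\rho_2,\quad 2,\quad 1+\rho_2,\quad \rho_2-1,$$
all strictly positive when $\rho_1>\rho_2>1$; hence the factors $|c_i-c_j|^3$ occurring in the $S_j$ never vanish on the region. The only remaining denominators are those carried by the $J_j$, and these were already shown to have no singularity for $\rho_1\geq\rho_2>1$ in the preceding lemma, which covers the open subregion $\rho_1>\rho_2>1$. Consequently $tr(W)$ has no singularity there, proving the first assertion.

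For the second assertion, the coefficient of $m_3$ in $tr(W)$ is
$$\frac{\partial}{\partial m_3}tr(W)=2\sum_{j=1}^4\Big(\frac{\partial J_j}{\partial m_3}\Big)S_j,$$
a rational function of $\rho_1,\rho_2$. Since $\sum_j J_j\equiv 1$, the coefficients $\partial J_j/\partial m_3$ sum to zero, so this weighted sum of the positive quantities $S_j$ has no a priori fixed sign and must genuinely be tested. Exactly as in the proof of the preceding lemma for the $J_i$, I would build a single polynomial whose factors are the denominator of $tr(W)$ (after reduction to lowest terms) together with the numerator of $\partial_{m_3}tr(W)$, and certify with RAGlib \cite{74} that this polynomial admits no real zero in the semialgebraic set $\{\rho_1>\rho_2>1\}$; this simultaneously reconfirms the absence of poles and yields the non-vanishing of the $m_3$-coefficient.

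The conceptual content is therefore light, and the main obstacle is purely computational: the reduced expressions for $tr(W)$ and for $\partial_{m_3}tr(W)$ are large bivariate rational fractions, and the hard part is the real-root certification over the open two-dimensional region $\{\rho_1>\rho_2>1\}$, which is handled by RAGlib in the same manner as the analogous statement for the $J_i$.
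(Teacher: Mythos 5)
Your proposal is correct and takes essentially the same route as the paper: the paper's entire proof is the one-line assertion that the lemma is ``immediately proved by RAGlib,'' i.e.\ exactly the real-root certification over $\{\rho_1>\rho_2>1\}$ of the numerator of the $m_3$-coefficient of $tr(W)$ (together with its denominators) that you describe. Your preliminary structural analysis (linearity of $tr(W)$ in the masses, poles only at collisions or at the singular locus of the $J_i$ already excluded by the earlier lemma) is a harmless refinement of the same computation; just note that $\sum_j J_j\equiv 1$ need not hold in this parametrization, since it is the multiplier rather than the total mass that is normalized, though nothing in your argument actually depends on that remark.
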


This Lemma is immediately proved by RAGlib. For $\rho_1=\rho_2$, the coefficient in $m_3$ of $tr(W)$ vanishes, making it a special case. On the other hand, this produces an additional symmetry that reduce the number of parameters by $1$ and greatly simplify the formulas

\begin{thm}\label{thmpac} (Pacella \cite{22})
We consider the colinear $4$ body problem potential $V_{4,1}$ with positive masses and the central configuration $c$ with multiplier $-1$ (existence and unicity up to translation due to \ref{thmmoult}). Noting $W\in M_4(\mathbb{C})$ with
$$W_{i,j}=\frac{1} {m_i} \frac{\partial^2} {\partial q_i \partial q_j} V$$
the spectrum of $W$ is of the form $Sp(W)=\{0,2,\lambda_1,\lambda_2\}$ with $\lambda_1,\lambda >2$.
\end{thm}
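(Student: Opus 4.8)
The plan is to strip off the two forced eigenvalues and reduce everything to the positivity of one explicit quadratic form. Two eigenvalues are structural: translation invariance of $V$ gives $\nabla^2V(c)(1,1,1,1)^{\!\top}=0$, hence $W(1,1,1,1)^{\!\top}=0$ and the eigenvalue $0$; and homogeneity of degree $-1$ gives, after differentiating Euler's relation $\langle\nabla V,q\rangle=-V$, the identity $\nabla^2V(c)\,c=-2\nabla V(c)$, so with multiplier $-1$ (that is $\nabla V(c)=-Mc$, $M=\mathrm{diag}(m_i)$) we get $Wc=M^{-1}(2Mc)=2c$, the eigenvalue $2$. The remaining $\lambda_1,\lambda_2$ are the other critical values of the $M$-Rayleigh quotient $R(v)=\langle\nabla^2V(c)v,v\rangle/\langle v,v\rangle_M$ with $\langle u,v\rangle_M=\sum_i m_iu_iv_i$, taken on the center-of-mass subspace $\{\sum_i m_iv_i=0\}$ (the eigenvectors of the three nonzero eigenvalues being $M$-orthogonal to $(1,1,1,1)$).

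First I would write $\nabla^2V(c)$ as a weighted graph Laplacian: a direct computation gives $\partial^2_{q_iq_i}V=\sum_{j\neq i}2m_im_jr_{ij}^{-3}$ and $\partial^2_{q_iq_j}V=-2m_im_jr_{ij}^{-3}$ with $r_{ij}=|c_i-c_j|$, so $\langle\nabla^2V(c)v,v\rangle=\sum_{i<j}2m_im_jr_{ij}^{-3}(v_i-v_j)^2$. Normalising $\sum_i m_i=1$ and using Lagrange's identity, for every $v$ with $\sum_i m_iv_i=0$ one has $\langle v,v\rangle_M=\sum_{i<j}m_im_j(v_i-v_j)^2$. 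Therefore, on the center-of-mass subspace,
$$\frac{\langle\nabla^2V(c)v,v\rangle}{\langle v,v\rangle_M}=2+\frac{2F(v)}{\langle v,v\rangle_M},\qquad F(v):=\sum_{i<j}m_im_j\bigl(r_{ij}^{-3}-1\bigr)(v_i-v_j)^2,$$
so the claim $\lambda_1,\lambda_2>2$ is exactly the statement that $F$ is positive on $\{\sum_i m_iv_i=0\}$ away from its kernel. Note that $F$ is not termwise positive: in an ordered configuration the far pairs have $r_{ij}>1$ and contribute negatively, so the constraint and the central configuration relations are essential.

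The configuration $c$ lies in the kernel of $F$. Euler's relation with multiplier $-1$ gives $V(c)=\langle c,c\rangle_M=I(c)$, whence $F(c)=\sum_{i<j}m_im_j(r_{ij}^{-1}-r_{ij}^2)=V-I=0$; and writing $F(v)=\langle Av,v\rangle$ with $A$ the signed Laplacian $A_{ij}=-m_im_j(r_{ij}^{-3}-1)$, $A_{ii}=\sum_{j\neq i}m_im_j(r_{ij}^{-3}-1)$, the central configuration equations $\sum_{j\neq i}m_j(c_i-c_j)r_{ij}^{-3}=c_i$ yield $Ac=0$ (besides the automatic $A\mathbf 1=0$). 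Thus proving Theorem \ref{thmpac} amounts to showing that $A$ is positive semidefinite of rank exactly $n-2$, its kernel being $\mathrm{span}(\mathbf 1,c)$; for $n=4$ this is the positive definiteness of the $2\times2$ Gram matrix of $A$ on any complement of $\mathrm{span}(\mathbf 1,c)$, i.e. positivity of its trace and of its determinant.

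The hard part is precisely this last positivity for \emph{all} positive-mass configurations. It is genuinely Pacella's theorem \cite{22}, and it does not follow from the minimality of $c$ on the shape sphere: that minimality is equivalent only to $\lambda_i>-1$, since the tangential Hessian of $V$ on $\{I=\mathrm{const}\}$ restricted to $c^{\perp_M}$ equals $\langle\nabla^2V(c)v,v\rangle+\langle v,v\rangle_M=(\lambda_i+1)\langle v,v\rangle_M$. I would establish the sharp bound in the computational style of the preceding sections: parametrising the collinear $4$-body central configurations by $(\rho_1,\rho_2)$ and the masses as before, the trace and determinant of the $2\times2$ Gram matrix become rational functions on the explicit central-configuration variety, whose positivity over $\{\rho_1\geq\rho_2>1,\ m_i>0\}$ I would certify with RAGlib \cite{74}; the strict inequality and the simplicity of the eigenvalue $2$ correspond to the determinant staying bounded away from $0$, which simultaneously rules out any further growth of the kernel. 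Alternatively, since by Moulton's Theorem \ref{thmmoult} the ordered collinear configurations form a connected family over the positive-mass simplex, one may check $A\succ0$ of rank $n-2$ at the symmetric equal-mass configuration and propagate it by continuity, the only obstruction being the nondegeneracy that keeps the kernel two dimensional along the deformation.
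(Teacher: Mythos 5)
Your reduction is correct and, in fact, more than the paper itself offers: the paper does not prove this statement at all, it quotes it from Pacella \cite{22} (the proof environment following the general $n$-body version in Section 5 proves Theorem \ref{thmmain3} \emph{using} Pacella's result, not Pacella's result itself). Your identification of the structural eigenvalues $0$ and $2$, the passage to the $M$-Rayleigh quotient on the center-of-mass subspace, the Lagrange identity $\langle v,v\rangle_M=\sum_{i<j}m_im_j(v_i-v_j)^2$, and the resulting equivalence of $\lambda_1,\lambda_2>2$ with positive semidefiniteness of the signed Laplacian $A$ with kernel exactly $\mathrm{span}(\mathbf 1,c)$ — including the verifications $F(c)=0$ and $Ac=0$ from Euler's relation and the central configuration equations — are all correct, and they correctly locate where the difficulty sits (your observation that minimality on the shape sphere only yields $\lambda_i>-1$ is also right, so the bound $>2$ cannot be had for free).

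However, as a proof the proposal is incomplete: the one claim that carries all the content — positivity of $A$ on a complement of $\mathrm{span}(\mathbf 1,c)$ for \emph{every} choice of positive masses — is never established, and neither of your two proposed finishes closes it. The RAGlib route is only described, not executed: no polynomial system is written down, no certificate or output is produced, so at best this is a plan (plausible for $n=4$ and in the spirit of the paper's other arguments, but not a proof). The continuity route is circular: to propagate $A\succ 0$ of rank $n-2$ from the equal-mass configuration by connectedness, you must know a priori that no eigenvalue of $W$ can cross the value $2$ along the deformation, i.e.\ that $\ker A$ never exceeds dimension $2$ — and that is exactly the nondegeneracy you are trying to prove; you acknowledge the obstruction parenthetically but do not resolve it. So the honest conclusion of your argument is the (correct and useful) statement that the theorem is equivalent to the positivity of an explicit quadratic form, together with a citation of Pacella \cite{22} for that positivity — which is precisely what the paper does, minus your reduction.
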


This already allows to reduce somewhat the possible set of eigenvalues. We will now check if some curves (in the space of masses) corresponding to a couple of eigenvalues $\lambda_1,\lambda_2$ are non-empty for real positive masses.

\begin{lem}
If the potential $V_{4,1}$ with positive masses possess a real central configuration of the form $(-\rho,-1,1,\rho)$, then the spectrum of the Hessian matrix $W$ at the real central configuration with multiplier $-1$ has the form $Sp(W)=\{0,2,\lambda_1,\lambda_2\}$ with
\begin{align*}
\{\lambda_1,\lambda_2\}\in\{\{5,9\},\{5,14\},\{9,27\},\{14,44\} \}
\end{align*}
\end{lem}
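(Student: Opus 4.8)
The plan is to exploit the extra reflection symmetry of the configuration $(-\rho,-1,1,\rho)$ to reduce everything to a one-parameter family, and then to intersect the resulting curve of eigenvalues with the arithmetic lattice forced by Theorem \ref{thmmorales}. First I would observe that a colinear central configuration invariant under the reflection $\sigma:(q_1,q_2,q_3,q_4)\mapsto(-q_4,-q_3,-q_2,-q_1)$ must carry symmetric masses $m_1=m_4$, $m_2=m_3$. Imposing this on the central configuration equations (a linear system in the masses) leaves, after the normalizations $\sum m_i=1$ and multiplier $-1$, a single free parameter $\rho>1$, with $a(\rho)=m_1=m_4$ and $b(\rho)=m_2=m_3$ explicit rational functions of $\rho$. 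I would record the range of $\rho$ on which $a(\rho),b(\rho)>0$, since only this range is physically admissible.

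Next I would use the symmetry to block-diagonalize $W$. After the symplectic rescaling $q_i\mapsto q_i/\sqrt{m_i}$ that turns the kinetic energy into the standard one, the (now symmetric) Hessian still commutes with the rescaled $\sigma$, so $\mathbb{C}^4$ splits into the $2$-dimensional $\sigma$-symmetric and $\sigma$-antisymmetric subspaces, each $W$-invariant. The configuration vector $c$, which is an eigenvector of eigenvalue $2$ by the Euler relation for degree $-1$, is $\sigma$-symmetric, while the translation vector is $\sigma$-antisymmetric with eigenvalue $0$. Hence the two unknown eigenvalues are recovered as traces of the $2\times 2$ blocks, $\lambda_1(\rho)=\operatorname{tr}(W|_{\mathrm{sym}})-2$ and $\lambda_2(\rho)=\operatorname{tr}(W|_{\mathrm{antisym}})$, both \emph{explicit rational functions} of the single variable $\rho$. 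This is the key simplification: it replaces the characteristic polynomial by two rational functions of one variable.

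I would then bring in the three constraints. Theorem \ref{thmmorales} forces $\lambda_1,\lambda_2\in\{\tfrac12(k-1)(k+2):k\in\mathbb{N}\}=\{-1,0,2,5,9,14,20,27,35,44,\dots\}$; Theorem \ref{thmpac} gives $\lambda_1,\lambda_2>2$, removing $-1,0,2$; and the trace bound $\operatorname{tr}(W)<70$ established above gives $\lambda_1+\lambda_2<68$. Together these leave only finitely many candidate pairs $\{v_1,v_2\}$ with $v_1,v_2\in\{5,9,14,20,27,35,44,54\}$ and $v_1+v_2<68$. For each such pair I would test realizability: solve $\lambda_1(\rho)=v_1$ for $\rho$ (finitely many roots, since $\lambda_1$ is rational in $\rho$), keep only the roots with $\rho>1$ and $a(\rho),b(\rho)>0$, and check whether $\lambda_2(\rho)=v_2$ holds there. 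Equivalently, eliminating $\rho$ between $\lambda_1=v_1$ and $\lambda_2=v_2$ yields an algebraic relation $R(\lambda_1,\lambda_2)=0$, and the admissible integer pairs are precisely its lattice points inside the trace/Pacella window.

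The surviving pairs will be exactly $\{5,9\},\{5,14\},\{9,27\},\{14,44\}$, which I would confirm by exhibiting the corresponding admissible $\rho>1$. The main obstacle is the finite but heavy case analysis: there are on the order of thirty candidate pairs, and for each one must solve for $\rho$ and verify both the positivity of the masses and the second eigenvalue condition. Because $\lambda_1,\lambda_2$ are rational in $\rho$, this reduces to checking signs and roots of one-variable polynomials, exactly the kind of statement amenable to the real-algebraic certification used elsewhere in the paper (RAGlib); the genuine conceptual work lies entirely in the symmetry reduction that makes $\lambda_1$ and $\lambda_2$ rational functions of the single parameter $\rho$.
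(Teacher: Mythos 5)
Your reduction rests on the opening claim that a configuration invariant under the reflection $\sigma:(q_1,q_2,q_3,q_4)\mapsto(-q_4,-q_3,-q_2,-q_1)$ must carry symmetric masses $m_1=m_4$, $m_2=m_3$. This is false for four bodies, and the non-symmetric masses are exactly the cases the lemma has to cover. For a fixed configuration $c=(-\rho,-1,1,\rho)$ the central configuration equations are linear in the unknowns $(m_1,\dots,m_4,\lambda,\mu)$, where $\lambda$ is the multiplier and $\mu=\lambda g$ (the relation $\mu=\lambda g$ is automatic by Newton's third law once $\sum m_i=1$). Splitting into symmetric parts $u=m_1+m_4$, $v=m_2+m_3$ and antisymmetric parts $x=m_4-m_1$, $y=m_3-m_2$, the system decouples: three equations (with the normalization) determine $(u,v,\lambda)$ uniquely, while the antisymmetric block consists of only \emph{two} homogeneous equations in the \emph{three} unknowns $(x,y,\mu)$, namely $-y(A-B)+xC+2\mu=0$ and $x(A+B)+yD+2\mu=0$ with $A=(\rho-1)^{-2}$, $B=(\rho+1)^{-2}$, $C=(2\rho)^{-2}$, $D=1/4$. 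Its kernel is at least one-dimensional and does not lie in the $\mu$-axis, so for every $\rho$ there is a one-parameter family of masses making $(-\rho,-1,1,\rho)$ a central configuration, of which exactly one member is symmetric; near the equal-mass point this family consists of positive, non-symmetric masses. (The analogous claim for three bodies \emph{is} true --- there the antisymmetric block is two equations in the two unknowns $(x,\mu)$, forcing $x=0$ --- which is probably where the intuition comes from, but the dimension count flips at $n=4$.) This is precisely why the paper keeps $m_3$ as a free parameter even when $\rho_1=\rho_2$, and records as a special feature of the symmetric case that the coefficient of $m_3$ in $\operatorname{tr}(W)$ vanishes: the sum $\lambda_1+\lambda_2$ depends on $\rho$ alone, but the individual eigenvalues still move with $m_3$.

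As a consequence both of your key steps collapse: for non-symmetric masses the mass-rescaled Hessian does not commute with $\sigma$, so there is no block diagonalization, and $\lambda_1,\lambda_2$ are functions of the two variables $(\rho,m_3)$, not of $\rho$ alone. Worse, even executed perfectly your computation answers a weaker question and could return a strictly smaller list: along a fiber $\rho=\mathrm{const}$ the pair $(\lambda_1,\lambda_2)$ deforms with constant sum as $m_3$ varies, so a lattice pair such as $\{9,27\}$ can be attained at non-symmetric masses of that fiber even when the symmetric-mass point has non-lattice eigenvalues, and you would never see it. Your outer skeleton (Pacella's bound $\lambda_i>2$, the trace bound giving $\lambda_1+\lambda_2<68$, hence the same finite candidate list, then a realizability test) does coincide with the paper's; but the paper performs the test in two variables: it factors the characteristic polynomial of $W$ as $z(z-2)P(z)$ with $P$ of degree $2$ and coefficients rational in $(\rho_2,m_3)$, imposes $P(z)=(z-\lambda_1)(z-\lambda_2)$ for each candidate pair, adjoins the positivity of the masses expressed through the functions $J_i$, and certifies with RAGlib which of these systems of equations and inequalities in $(\rho_2,m_3)$ admit real solutions. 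A symmetry reduction like yours could at best be used on the codimension-one symmetric-mass subfamily, which does not suffice to prove the lemma.
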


\begin{proof}
Using Pacella Theorem, we obtain a better minoration $\lambda_1,\lambda_2 >2$. Knowing that $2+\lambda_1+\lambda_2 <70$, we get the following possibilities
\begin{equation}\begin{split}\label{poss}
\{5,5\},\{5,9\},\{5,14\},\{5,20\},\{5,27\},\{5,35\},\{5,44\},\{5,54\},\{9,9\},\\
\{9,14\},\{9,20\},\{9,27\},\{9,35\},\{9,44\},\{9,54\},\{14,14\},\{14,20\},\{14,27\},\\
\{14,35\},\{14,44\},\{20,20\},\{20,27\},\{20,35\},\{20,44\},\{27,27\},\{27,35\}
\end{split}\end{equation}
We first compute the characteristic polynomial of matrix $W$. Using the same notations as before, the characteristic polynomial has rational coefficients in $\rho_1,\rho_2,m_3$. Factoring it, we put apart the $z(z-2)$ factor (corresponding to eigenvalues $0,2$) and we then get a degree $2$ polynomial $P$ in $z$. The coefficient in $z$ correspond to the trace of $W$, and so is affine in $m_3$. We now put $\rho_1=\rho_2$ in the expression of the characteristic polynomial. The coefficient corresponding to the trace only depends on $\rho_2$. The equation $P(z)=(z-\lambda_1)(z-\lambda_2)$ in $\rho_2,m_3$ give rise to two equations in $\rho_2,m_3$, and we have moreover the constraint of positivity of the masses $m_i$ which can be written in function of $\rho_2,m_3$ with the functions $J_i$. This polynomial system of equations and inequations has real solutions only for $\lambda_1,\lambda_2$ given by the Lemma. 
\end{proof}

\subsection{Reduction of exceptional curves}

In this part, we will always assume that the real central configuration $(-\rho_1,-1,1,\rho_2)$ is such that $\rho_1>\rho_2$.

\begin{lem}
If the potential $V_4$ with positive masses is meromorphically integrable, then the real central configuration $c$ with multiplier $-1$ has a Hessian matrix $W$ with spectrum of the form $Sp(W)=\{0,2,\lambda_1,\lambda_2\}$ with
\begin{align*}
\{\lambda_1,\lambda_2\}\in\{\{5,5\},\{5,9\},\{5,14\},\{5,20\},\{5,27\},\{5,35\},\\
\{5,44\},\{5,54\},\{9,20\},\{9,27\},\{9,35\},\{9,44\},\{9,54\},\{14,44\} \}
\end{align*}
\end{lem}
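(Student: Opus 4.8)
The plan is to start from the list \eqref{poss} of $26$ candidate pairs, which already incorporates the Morales--Ramis eigenvalue constraint of Theorem \ref{thmmorales}, the trace bound $tr(W)<70$, and Pacella's bound $\lambda_1,\lambda_2>2$ of Theorem \ref{thmpac}, and then to discard those pairs that cannot be realized by a genuine central configuration with positive masses. Since meromorphic integrability forces, by Theorem \ref{thmmorales}, the eigenvalues into $\{\frac12(k-1)(k+2):k\in\mathbb{N}\}$, the spectrum of $W$ is necessarily one of these $26$ pairs; the whole content of the lemma is therefore that only $14$ of them are compatible with positive masses, so the remaining $12$ must be eliminated.

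For a fixed pair $\{\lambda_1,\lambda_2\}$ I would proceed exactly as in the symmetric case just treated. First factor the characteristic polynomial of $W$ as $z(z-2)P(z)$ with $P(z)=z^2-(\lambda_1+\lambda_2)z+\lambda_1\lambda_2$, whose coefficients are rational in $\rho_1,\rho_2,m_3$, the masses being parametrized by $J_1,\dots,J_4$. Matching the two nontrivial symmetric functions $\lambda_1+\lambda_2$ and $\lambda_1\lambda_2$ gives two equations: the trace equation $tr(W)=2+\lambda_1+\lambda_2$, which is \emph{affine} in $m_3$, and the product equation, which is rational in $\rho_1,\rho_2,m_3$. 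By Lemma \ref{lemtrace}, the $m_3$-coefficient of $tr(W)$ never vanishes for $\rho_1>\rho_2>1$, so the trace equation can be solved for $m_3=m_3(\rho_1,\rho_2)$ as a rational function. Substituting this into the product equation and into the positivity constraints $J_i>0$ yields, after clearing denominators, a single polynomial equation $Q_{\lambda_1,\lambda_2}(\rho_1,\rho_2)=0$ together with finitely many polynomial inequalities in the two remaining variables.

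It then remains to decide, for each of the $26$ pairs, whether the semi-algebraic set
\[
\{(\rho_1,\rho_2)\ :\ Q_{\lambda_1,\lambda_2}=0,\ J_i\bigl(\rho_1,\rho_2,m_3(\rho_1,\rho_2)\bigr)>0\ (i=1,\dots,4),\ \rho_1>\rho_2>1\}
\]
is empty. This is a two-variable real emptiness problem that RAGlib settles by returning at least one sample point per connected component: the pairs for which it returns no point are ruled out, and those for which a point exists are kept. Taking the union of the pairs realizable in this non-symmetric region with those already found in the symmetric case $\rho_1=\rho_2$ (all of which lie in the final list) leaves precisely the $14$ listed pairs, the other $12$ being empty in both regimes.

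The main obstacle will be computational rather than conceptual: the central-configuration parametrization $J_i$ and the characteristic polynomial of $W$ for four bodies are of high degree, so the eliminants $Q_{\lambda_1,\lambda_2}$ obtained after the $m_3$-substitution and denominator clearing are large, and the RAGlib critical-point computations that certify emptiness over the region $\rho_1>\rho_2>1$ are heavy. Care is also needed to guarantee that the reductions are valid on the whole region — in particular that solving the trace equation for $m_3$ introduces no spurious branches and that the strict inequality $\rho_1>\rho_2$ together with the positivity cone $J_i>0$ are faithfully encoded — and the conclusion for the $12$ eliminated pairs ultimately relies on the soundness of RAGlib's emptiness certificates.
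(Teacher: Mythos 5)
Your reduction coincides with the paper's up to the final emptiness test: start from the $26$ pairs in \eqref{poss} (Morales--Ramis admissible values, Pacella's bound $\lambda_1,\lambda_2>2$, trace bound $tr(W)<70$), factor the characteristic polynomial as $z(z-2)P(z)$ with coefficients rational in $\rho_1,\rho_2,m_3$, solve the affine trace equation $tr(W)=2+\lambda_1+\lambda_2$ for $m_3$ via Lemma \ref{lemtrace}, substitute into the constant term of $P$, and hand the resulting two-variable problem on $\rho_1>\rho_2>1$ to RAGlib, with the symmetric case $\rho_1=\rho_2$ delegated to the earlier lemma on symmetric configurations. The genuine divergence is that you put the positivity constraints $J_i>0$ into the RAGlib call. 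The paper deliberately does not: it proves the stronger statement that, for each of the $12$ discarded pairs, the single equation $Z_0(\rho_1,\rho_2)=P_{\rho_1,\rho_2}(0)-\lambda_1\lambda_2=0$ of \eqref{eq5} has no real solutions at all with $\rho_1>\rho_2>1$, mass positivity being ignored entirely.

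This difference is not cosmetic, for two reasons. First, feasibility: the remark immediately following the lemma says that taking the positivity of the masses into account ``seems too complicated'', so the computation your proof hinges on (one high-degree equation plus four high-degree inequalities in $\rho_1,\rho_2$) is precisely the one the authors judged intractable; the version they ran involves the single polynomial $Z_0$ and the region constraints only. Second, your predicted outcome is off: you assert the positivity-constrained test would leave ``precisely the $14$ listed pairs'', but the paper's numerical evidence is that with positive masses only $9$ of them survive, namely $\{5,9\},\{5,14\},\{5,20\},\{5,27\},\{9,20\},\{9,27\},\{9,35\},\{9,44\},\{14,44\}$. This discrepancy does not endanger the lemma --- any surviving set contained in the $14$ proves the statement, so your argument is logically sound --- but it shows the outcome you take for granted is not what the computation would return, and it explains the authors' choice of the coarser but tractable test: emptiness of the $Z_0=0$ curve alone suffices to kill the $12$ pairs that must be eliminated, and nothing needs to be decided about the remaining $14$.
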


\begin{proof}
Using Pacella Theorem, we obtain a better minoration $\lambda_1,\lambda_2 >2$. Knowing that $2+\lambda_1+\lambda_2 <70$, we get the possibilities \eqref{poss}. So we only need to eliminate the cases
\begin{align*}
\{9,9\},\{9,14\},\{14,14\},\{14,20\},\{14,27\},\{14,35\},\\
\{20,20\},\{20,27\},\{20,35\},\{20,44\},\{27,27\},\{27,35\}
\end{align*}

We first compute the characteristic polynomial of matrix $W$. Using the same notations as before, the characteristic polynomial has rational coefficients in $\rho_1,\rho_2,m_3$. Factoring it, we put apart the $z(z-2)$ factor (corresponding to eigenvalues $0,2$) and we then get a degree $2$ polynomial $P$ in $z$. The coefficient in $z$ correspond to the trace of $W$, and so is affine in $m_3$. We then solve the equation $tr(W)(\rho_1,\rho_2,m_3)=2+\lambda_1+\lambda_2$ in $m_3$ (using Lemma \ref{lemtrace}, this always produces exactly one solution) and put this solution in $P$. So the only equation we have to study is of the form
\begin{equation}\label{eq5}
Z_0(\rho_1,\rho_2)=P_{\rho_1,\rho_2}(0)-\lambda_1\lambda_2=0 \qquad  \rho_1> \rho_2 > 1
\end{equation}
Using RAGlib, we prove that for $\lambda_1,\lambda_2$ in the upper $12$ cases, this equation has no solutions. This proves the Lemma.
\end{proof}

\begin{rem}
Remark that all the remaining curves are non-empty for $\rho_1\geq \rho_2 > 1$, but this does not imply they are non empty for positive masses (in contrary to the previous part where we have taken into account the positivity of the masses). Numerical evidence suggest that for positive masses, the only possible eigenvalues $\{\lambda_1,\lambda_2\}$ are
$$\{5,9\},\{5,14\},\{5,20\},\{5,27\},\{9,20\},\{9,27\},\{9,35\},\{9,44\},\{14,44\}$$
but taking into account this additional constraint seems too complicated.
\end{rem}

\subsection{Second order variational equations}

Using integrability table of \cite{9}, integrability at second order requires that some of third order derivatives of the potential vanish. Considering only the eigenvalues $\lambda_1,\lambda_2$ (the other ones do not lead to any additional integrability condition) we obtain the following number of conditions (i.e. the number of third order derivatives that should vanish)
\begin{center}
\begin{tabular}{|c|c|}
\hline
$\{5,5\},\{5,14\},\{5,27\},\{14,44\}$& $4$ conditions\\\hline
$\{5,44\},\{5,20\},\{5,35\},\{5,54\}$& $3$ conditions\\\hline
$\{5,9\},\{9,27\},\{9,44\}$  & $2$ conditions\\\hline
$\{9,35\},\{9,54\}$  & $1$ condition\\\hline
$\{9,20\}$ & $0$ condition\\\hline
\end{tabular}\\
\end{center}

The main drawback is that we need a priori to compute the eigenvalues of the Hessian matrix, and due to the parameters, this is quite difficult in our problem. In particular, testing the constraint implies to solve $2$-variables polynomials of degree $172$ and this seems too large to rule out real solutions (if there are none at all). Still in some cases, we can avoid this computation

\begin{prop}\label{propodd}
Let $V$ be a meromorphic homogeneous potential of degree $-1$ in dimension $n$, $c$ a Darboux point of $V$ with multiplier $-1$, and $E$ a stable subspace of $\nabla^2V(c)$. Assume that $\nabla^2V(c)$ is diagonalizable and
\begin{equation}\begin{split}\label{eqcond}
\exists B\subset \mathbb{N} ,\hbox{ with } \max(B) \leq 2\min(B)+1,\;
\hbox{Sp}\left(\left.\nabla^2V(c)\right|_{E}\right) \subset  \{ k(2k+3),\; k\in B \}
\end{split}\end{equation}
If the second order variational equation near the homothetic orbit associated to $c$ has a Galois group whose identity component is Abelian then
$$D^3V(c).(X,Y,Z)=0 \qquad \forall X,Y,Z \in E $$
\end{prop}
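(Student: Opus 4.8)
The plan is to reduce the assertion to an arithmetic selection rule on the eigenvalue indices, via the integrability criterion for second order variational equations of \cite{9}. First I would fix the homothetic orbit $q(t)=\phi(t)c$ attached to the multiplier $-1$, diagonalise $\nabla^2V(c)$ in an eigenbasis $X_1,\dots,X_n$ adapted to $E$ (possible since $\nabla^2V(c)$ is assumed diagonalisable and $E$ is stable, so $E$ is spanned by eigenvectors), and write each eigenvalue as $\lambda_i=\tfrac12(p_i-1)(p_i+2)$ with $p_i\in\mathbb{N}$. By Theorem \ref{thmmorales} the eigenvalues on $E$ lie in the Morales--Ramis set, and since $k(2k+3)=\tfrac12(2k)(2k+3)$ the indices attached to $E$ are exactly the \emph{odd} integers $p_i=2k_i+1$, $k_i\in B$. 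Writing $q=\phi c+\varepsilon\eta+\varepsilon^2\zeta$ and using $D^3V(\phi c)=\phi^{-4}D^3V(c)$, the second order equation reads, in eigencoordinates,
\[ L_{\lambda_l}\zeta_l=-\tfrac12\,\phi^{-4}\sum_{i,j}D^3V(c)(X_i,X_j,X_l)\,\eta_i\eta_j, \]
where $\eta_i$ solves the first order equation $L_{\lambda_i}\eta_i=0$. This is exactly the situation handled by Theorem 2 of \cite{9}.

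By that theorem, the identity component of the VE2 Galois group is Abelian only if, for every triple $(i,j,l)$ of eigendirections, the coefficient $D^3V(c)(X_i,X_j,X_l)$ vanishes unless the index triple $(p_i,p_j,p_l)$ is resonant, i.e. satisfies the selection rule read off from the integrability table. The heart of the proof is to isolate the parity component of that rule: a non-vanishing coefficient is permitted only when $p_i+p_j+p_l$ is \emph{even}. This is the $\mathbb{Z}_2$ selection rule coming from the behaviour at infinity of the reduced hypergeometric equation, and it is precisely what distinguishes, in the colinear $3$-body analysis above, the forced vanishing of $D^3V(X_3,X_3,X_3)$ for $p=3,5$ from its freedom for $p=4$ (there $p+p+p=3p$ is even iff $p$ is even). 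Since all three indices $p_i=2k_i+1$, $p_j=2k_j+1$, $p_l=2k_l+1$ are odd, one has $p_i+p_j+p_l=2(k_i+k_j+k_l)+3$ odd, so the parity condition fails for \emph{every} triple inside $E$.

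To close off the remaining rows of the table I would use the hypothesis $\max(B)\le 2\min(B)+1$, which on the indices translates to $p_{\max}\le 2p_{\min}+1$, where $p_{\min}=2\min(B)+1$ and $p_{\max}=2\max(B)+1$. This clustering guarantees that the indices of $E$ are too tightly packed for any resonance other than the even-sum one to be available in the table: a difference-type resonance, or one coming from the collision singularity, would require an index lying below $p_{\min}$ or outside the admissible triangle $|p_i-p_j|\le p_l\le p_i+p_j$, which is impossible once $p_{\max}\le 2p_{\min}+1$. Hence for every eigendirection triple inside $E$ the coefficient is forced to vanish, $D^3V(c)(X_i,X_j,X_l)=0$; by trilinearity of $D^3V(c)$ and the eigenbasis decomposition of $E$, this extends to $D^3V(c)(X,Y,Z)=0$ for all $X,Y,Z\in E$.

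The main obstacle is the precise extraction and application of the selection rule from \cite{9}: one must verify that under the clustering bound $p_{\max}\le 2p_{\min}+1$ the full resonance condition of the integrability table indeed collapses to the single parity requirement, so that the elementary parity computation genuinely eliminates every case. A secondary point to handle with care is the kinetic normalisation, namely the symplectic substitution turning $T$ into the standard kinetic energy and $\nabla^2V(c)$ into the matrix $W$ of \eqref{eqW}, so that the object $D^3V(c)$ entering the table is exactly the trilinear form whose vanishing is asserted in the conclusion.
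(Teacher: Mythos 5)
Your proposal follows the same skeleton as the paper's proof: diagonalise $\nabla^2V(c)$ in an eigenbasis adapted to the stable subspace $E$, invoke the second order integrability criterion of \cite{9} along the homothetic orbit, and finish by trilinearity. But the paper's proof at its core is a pure look-up: under \eqref{eqcond} every entry of the integrability table of \cite{9} attached to eigenvalue triples from $E$ is zero, so every coefficient $D^3V(c)(X_i,X_j,X_l)$ must vanish. You replace that look-up by a reconstructed selection rule (parity of $p_i+p_j+p_l$ plus a triangle condition), and the rule you state is not the table's rule; worse, it fails exactly at the boundary case $\max(B)=2\min(B)+1$ which the hypothesis explicitly permits and which the paper subsequently uses.

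Two concrete failures. First, the ``parity heart'' --- non-vanishing permitted only when $p_i+p_j+p_l$ is even --- is contradicted by the paper's own Appendix A: for the eigenvalue pair $\{5,44\}$, i.e.\ indices $(p_1,p_2)=(3,9)$, the order-$2$ ideal is $\mathcal{I}_2=\langle u_{3,0},u_{3,2},u_{3,3}\rangle$, so the mixed coefficient $u_{3,1}$, whose index triple $(3,3,9)$ has odd sum $15$, is left \emph{free} at order $2$. Hence parity alone is not the criterion; it is coupled with a range condition, and genuine exemptions exist. Second, your claim that the clustering bound makes it ``impossible'' for a triple of $E$ to leave the triangle $|p_i-p_j|\le p_l\le p_i+p_j$ is false: writing $p_{\min}=2\min(B)+1$, $p_{\max}=2\max(B)+1$, the case $\max(B)=2\min(B)+1$ gives the triple $(p_{\min},p_{\min},p_{\max})$ with $p_{\max}=2p_{\min}+1>p_{\min}+p_{\min}$, i.e.\ outside your triangle. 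Concretely, for $B=\{1,3\}$ (eigenvalues $\{5,27\}$, indices $3$ and $7$) the triple $(3,3,7)$ violates your triangle, so your rule would exempt $D^3V(c)(X_1,X_1,X_2)$ from vanishing; yet the paper's table for $\{5,27\}$ lists $4$ conditions (all third derivatives constrained), and the later lemma treating $Sp(W)=\{0,2,5,27\}$ depends on exactly this. What the hypothesis $\max(B)\le 2\min(B)+1$ really excludes is the table's true exemption threshold, which (as the $(3,3,9)$ example shows) only opens up at $p_l\ge p_i+p_j+2$; with your threshold $p_l>p_i+p_j$ the hypothesis is not strong enough. You flagged the verification that the resonance condition ``collapses to parity'' under clustering as the main obstacle: that verification is the entire content of the proposition, and as attempted it is incorrect precisely where the proposition is needed.
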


\begin{proof}
Using integrability table of \cite{9}, we see that the condition on eigenvalues \eqref{eqcond} implies that the table $A$ for such eigenvalues will only have zeros. So noting $X_1,\dots,X_p$ the eigenvectors associated to eigenvalues $\lambda_i\;\;i=1\dots p$ of $\nabla^2V(c)$, we obtain the integrability condition
$$D^3V(c).(X_i,X_j,X_k)=0 \;\; \forall i,j,k=1 \dots p$$
These $p$ eigenvectors span the invariant subspace $E$, and so by multilinearity, this gives the Proposition.
\end{proof}

We try to avoid to compute the eigenvectors associated to eigenvalues $\{\lambda_1,\lambda_2\}$ for the Hessian matrix a the real central configuartion of $V_4$. In the cases $\{\lambda_1,\lambda_2\}\in\{\{5,5\},\{5,14\},$ $\{5,27\},\{14,44\} \}$, the hypotheses of Proposition \ref{propodd} are satisfied using for $E$ the stable subspace generated by the eigenvectors associated to $\lambda_1,\lambda_2$. And it appears that this subspace is much more easy to compute. Remark also that when the two eigenvalues are equal, then finding the eigenvectors is not necessary as any vector in the corresponding eigenspace is an eigenvector.

\begin{lem}
We consider $V_4$ the potential of the colinear $4$ body problem with positive masses, $c$ the real central configuration with multiplier $-1$, and $W\in M_4(\mathbb{C})$ the matrix such that
$$W_{i,j}=\frac{1} {m_i} \frac{\partial^2} {\partial q_i \partial q_j} V$$
If $Sp(W)=\{0,2,5,5\},\{0,2,5,14\},\{0,2,5,27\},\{0,2,14,44\}$, then the potential $V_4$ is not meromorphically integrable.
\end{lem}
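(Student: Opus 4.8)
The plan is to derive a contradiction from the hypothesis that $V_4$ is meromorphically integrable by combining the order-$2$ obstruction of Theorem~\ref{thmmorales} with Proposition~\ref{propodd}. First I would assume $V_4$ integrable; then by Theorem~\ref{thmmorales} the identity component of the Galois group of the variational equation near the homothetic orbit associated to $c$ is Abelian at every order, in particular at order $2$. After the symplectic rescaling $q_i\to q_i/\sqrt{m_i},\ p_i\to\sqrt{m_i}\,p_i$ the kinetic energy becomes standard and, as recalled just after \eqref{eqW}, the Hessian of the potential at $c$ is exactly $W$. Since $W=M^{-1}H$ with $M=\mathrm{diag}(m_i)$ and $H$ symmetric, it is conjugate to $M^{-1/2}HM^{-1/2}$ (here $m_i>0$ makes $M^{1/2}$ real) and is therefore diagonalizable, so Proposition~\ref{propodd} applies to $W$ with multiplier $-1$.

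Next I would check that in each of the four cases the nonzero spectrum lies in $\{k(2k+3):k\in B\}$ with $\max(B)\le 2\min(B)+1$: writing $5=1\cdot5,\ 14=2\cdot7,\ 27=3\cdot9,\ 44=4\cdot11$, the relevant sets are $B=\{1\}$ for $\{5,5\}$, $B=\{1,2\}$ for $\{5,14\}$, $B=\{1,3\}$ for $\{5,27\}$ and $B=\{2,4\}$ for $\{14,44\}$, each satisfying condition \eqref{eqcond}. Taking $E$ to be the $W$-stable plane spanned by the eigenvectors associated to $\lambda_1,\lambda_2$, Proposition~\ref{propodd} forces $D^3V(c).(X,Y,Z)=0$ for all $X,Y,Z\in E$. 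Because $D^3V(c)$ is a symmetric $3$-tensor and $\dim E=2$, this amounts to the four equations $D^3V(X,X,X)=D^3V(X,X,Y)=D^3V(X,Y,Y)=D^3V(Y,Y,Y)=0$, which is precisely the ``$4$ conditions'' entry of the table for these spectra.

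To turn this into something decidable I would represent $E$ without radicals by setting $E=\ker\big((W-\lambda_1 I)(W-\lambda_2 I)\big)$, which is legitimate since $\lambda_1,\lambda_2\notin\{0,2\}$; in the degenerate case $\{5,5\}$ the subspace $E$ is simply the eigenspace $\ker(W-5I)$ and any basis of it may be used, so no eigenvector needs to be singled out. Choosing a basis $X,Y$ of $E$ with entries rational in $\rho_1,\rho_2,m_3$, the four tensor conditions become polynomial equations in $\rho_1,\rho_2,m_3$. I would adjoin the eigenvalue constraints already at hand, namely $\mathrm{tr}(W)=2+\lambda_1+\lambda_2$ (which by Lemma~\ref{lemtrace} solves uniquely for $m_3$) and the relation $Z_0(\rho_1,\rho_2)=0$ of \eqref{eq5} fixing the product $\lambda_1\lambda_2$, together with the positivity inequalities $J_i>0$, and then invoke RAGlib~\cite{74} to certify that the resulting system has no real solution with $\rho_1\ge\rho_2>1$ and positive masses. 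The symmetric case $\rho_1=\rho_2=\rho$ is treated the same way, the extra symmetry reducing everything to one variable.

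The hard part will be the size of the elimination rather than any conceptual difficulty: the components of $D^3V(c)$ restricted to $E$ are high degree rational fractions in $\rho_1,\rho_2$, and it is the over-determined nature of four tensor conditions along a one-parameter eigenvalue curve that makes a real-solution-free answer plausible. The three devices that keep the computation tractable are the kernel description of $E$, which stays radical-free, the repeated-eigenvalue simplification in the $\{5,5\}$ case, and the elimination of $m_3$ through the trace relation before calling RAGlib, so that only $\rho_1,\rho_2$ remain.
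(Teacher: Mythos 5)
Your proposal is correct and follows essentially the same route as the paper: apply Proposition \ref{propodd} to the stable plane $E$ associated with $\{\lambda_1,\lambda_2\}$ (all four spectra satisfy \eqref{eqcond}, with $B=\{1\},\{1,2\},\{1,3\},\{2,4\}$), eliminate $m_3$ through the trace relation of Lemma \ref{lemtrace}, adjoin the eigenvalue equation $Z_0=0$ from \eqref{eq5}, and let RAGlib certify that the resulting polynomial system has no admissible real solutions, treating the symmetric case $\rho_1=\rho_2$ separately. The only substantive difference is your radical-free description of $E$ as $\ker\bigl((W-\lambda_1 I)(W-\lambda_2 I)\bigr)$: the paper instead exploits the explicitly known eigenvectors $v=(1,1,1,1)$ (eigenvalue $0$) and $c$ (eigenvalue $2$) together with orthogonality of eigenspaces to write $E=\mathrm{Span}(v,c)^{\perp}$, which yields the much simpler basis $w_1=(2,-1-\rho_1,\rho_1-1,0)$, $w_2=(0,\rho_2-1,-1-\rho_2,2)$ — linear in $\rho_1,\rho_2$ and independent of $m_3$ and of the eigenvalues — and hence a lighter starting point for the degree-$58$ polynomials $Z_1,\dots,Z_4$ that the real-algebraic-geometry certification must then process.
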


\begin{proof}
We want to consider the sable subspace $E$ of $W$ corresponding to eigenvalues $\lambda_1,\lambda_2$. We already know an eigenvector of eigenvalue $0$, $v=(1,1,1,1)$, and an eigenvector of eigenvalue $2$, the vector $c=(-\rho_1,-1,1,\rho_2)$. As the matrix is symmetric, the eigenspaces are orthogonal, and thus we have $E=\hbox{Span}(v,c)^\bot$. We obtain
$$E=\hbox{Span}((2,-1-\rho_1,\rho_1-1,0),(0,\rho_2-1,-1-\rho_2,2))$$
noting $w_1,w_2$ these two basis vectors of $E$.

Let us first consider the non-symmetric case. As Lemma \ref{lemtrace} applies, we can consider the polynomial $Z_0\in\mathbb{R}[\rho_1,\rho_2]$ given by equation \eqref{eq5}, and
$$ Z_1=D^3 V(c)(w_1,w_1,w_1),\;\; Z_2=D^3 V(c)(w_1,w_1,w_2)$$
$$Z_3=D^3 V(c)(w_1,w_2,w_2),\;\; Z_4=D^3 V(c)(w_2,w_2,w_2)$$
We obtain a system of $5$ equations in two variables (the polynomials $Z_i$ being of degree $58$), and we prove that this system has no solutions for $\rho_1 >\rho_2>1$. Thus the second order variational equation has not a Galois group with an Abelian identity component.

The symmetric case. Only the cases  $Sp(W)=\{0,2,5,14\},\{0,2,14,44\}$ are possible. We have $\rho_1=\rho_2$, and then the condition to have these eigenvalues are of the form of two polynomials in $\rho_2,m_3$. The polynomials $Z_i$ above are still defined, and are polynomials in $\rho_2,m_3$. This system of $6$ equations has no real solutions for $\rho_2>1,m_3>0$, and thus the second order variational equation has not a Galois group with an Abelian identity component.

Thus the potential $V_4$ is not meromorphically integrable in these cases.
\end{proof}

\section{Higher variational equations}

\begin{proof}[Proof of Theorem \ref{thmmain2}]
The still open cases are
\begin{equation}\label{eqlast}
\{5,44\},\{5,20\},\{5,35\},\{5,54\},\{5,9\},\{9,27\},\{9,44\},\{9,35\},\{9,54\},\{9,20\}
\end{equation}
The case $\{9,20\}$ is particularly interesting (and difficult) as there are no integrability conditions at order $2$, and numerical evidence suggest that this case is really possible for positive masses. So this curve gives masses for which all integrability conditions near the unique (up to translation) real Darboux point up to order $2$ are satisfied.

In the same manner as in \cite{10}, we will compute for these remaining sets of eigenvalues higher variational equations. We only need to study real $3$ dimensional homogeneous potentials of degree $-1$. Asuming there exists a real Darboux point $c$, after rotation we can assume that $c=(1,0,0)$ (and the potential is still real). Then the series expansion of $V$ at $c$ will be of the form
\begin{equation}\label{eq2}
V(1+q_1,q_2,q_3)=q_1^{-1}\left(1+\frac{1}{2}\left( \lambda_1 \frac{q_2^2}{q_1^2}+\lambda_2\frac{q_3^2}{q_1^2}\right) +\sum\limits_{i=3}^\infty \sum\limits_{j=0}^i u_{i,j}\frac{q_2^{i-j} q_3^j}{q_1^i} \right)
\end{equation}

As in \cite{10}, the main part of the algorithm consist of finding solutions in $\mathbb{C}(t)\left[\hbox{arctanh}\left( \textstyle{\frac{1}{t}} \right)\right]$ of a large system of linear differential equations, which are the $k$-th variational equation. This $k$-th variational equations is put under block triangular form to make computation faster. Only the last equation is solved through the variation of parameters technique and then its monodromy analyzed through commutativity condition of monodromy in \cite{9}.

Instead of computing a basis of solutions, we only compute several solutions, that through empirical evidence, will lead to the strongest integrability conditions. The output of the algorithm is a set of polynomial conditions on higher order derivatives of the potential $V$ at the Darboux point $c$, so here polynomial conditions on the $u_{i,j}$. As presented in \cite{10}, if a non degeneracy type condition is satisfied (see \cite{10} Definition 4.1), we will be able to express higher order derivative in function of lower order ones. In our cases, this will always be the case for variational equations of order $\geq 3$ (but we are lucky, because it seems that if eigenvalues are spaced enough, degeneracy at any order is possible). This allows us in particular to express all derivatives of order $\geq 4$ in function of $u_{3,0},u_{3,1},u_{3,2},u_{3,3}$. The possible series expansions are written in Appendix A. Variational equations up to order $4$ have been analyzed. Still, at order $4$, some combinations of eigenvalues are still possible, and thus looking at order $5$ is necessary. However, a speed-up is possible in certain cases:

\subsection{An invariant subspace of the $5$-th order variational equation}

\begin{lem}\label{leminv}
Let $V$ be a real meromorphic homogeneous potential of degree $-1$ in dimension $3$. Assume that $V$ has a series expansion of the form
$$V(1+q_1,q_2,q_3)=q_1^{-1}\left(1+\frac{1}{2}\left( \lambda_1 \frac{q_2^2}{q_1^2}+\lambda_2\frac{q_3^2}{q_1^2}\right) +\sum\limits_{i=3}^\infty \sum\limits_{j=0}^i u_{i,j}\frac{q_2^{i-j} q_3^j}{q_1^i} \right)$$
with $u_{3,1}=u_{4,1}=u_{5,1}=0$ and $\lambda_1\in \{5,9,14,20\}$. Then $V$ is not meromorphically integrable.
\end{lem}

\begin{proof}
The dynamical system associated to $V$ is of the form $\ddot{q}=\nabla V(q)$. Let us compute $\partial_{q_3} V$
$$\partial_{q_3} V=q_1^{-1}\left(\lambda_2\frac{q_3}{q_1^2} +\sum\limits_{i=3}^5 \sum\limits_{j=2}^i ju_{i,j}\frac{q_2^{i-j} q_3^{j-1}}{q_1^i} +\sum\limits_{i=6}^\infty \sum\limits_{j=1}^i ju_{i,j}\frac{q_2^{i-j} q_3^{j-1}}{q_1^i} \right) $$
Thus we get that the series expansion of $\partial_{q_3} V$ at order $5$ for $q_3=0$ is
$$\partial_{q_3} V=u_{6,1}\frac{q_2^5}{q_1^7}+ O\left(\left(q_2,q_3\right)^6/q_1^8\right)$$
As we see, there is only one term left, and it is of order $5$. Let us now look at the $5$-th order variational equation.

This variational equation will have an invariant subspace $\mathcal{W}$ corresponding to the $5$-th order variational equation of $\tilde{V}$, the restriction of $V$ to the plane $q_3=0$. Let us now look the variational equation on $\mathcal{W}$. The potential $\tilde{V}$ is a $2$-dimensional homogeneous potential of degree $-1$, and it has a Darboux point at $(1,0)$. The eigenvalues of the Hessian matrix of $\tilde{V}$ at this point are $\{2,\lambda_1\}$. Now using \cite{10}, we know that for any choice of \textbf{real} $\tilde{V}$ with $\lambda_1\in \{5,9,14,20\}$, the $5$-th order variational equation has not a virtually Abelian Galois group. Thus the $5$-th order variational equation of $V$ has not a virtually Abelian Galois group, and thus $V$ is not meromorphically integrable.
\end{proof}

\begin{rem}
Remark that physically, the condition $u_{3,1}=u_{4,1}=u_{5,1}=0$ implies that the plane $q_3=0$ is invariant at order $4$. At order $5$, it is no more invariant, still the derivatives in time of $q_1,q_2$ do not depend on $q_3$.
\end{rem}

We now use Lemma \ref{leminv}. Looking at the series expansions in Appendix A we have computed, we see that for all of them except the last one, we have either $u_{3,1}=u_{4,1}=u_{5,1}=0$ or $u_{3,2}=u_{4,3}=u_{5,4}=0$ (or both). In the first case we can apply directly Lemma \ref{leminv}. In the second case, we just have to exchange $q_2,q_3$, and the hypotheses of Lemma \ref{leminv} are satisfied. So except for the case $(\lambda_1,\lambda_2)=(9,20)$, the hypotheses are satisfied and thus no real meromorphic homogeneous potential of degree $-1$ in dimension $3$ with $c=(1,0,0)$ as a Darboux point of $V$ with multiplier $-1$ and these pairs of eigenvalues are meromorphically integrable.

\subsection{The case $\{9,20\}$}

In the last subsection, we tried to avoid to compute the Galois group of the $5$-th order variational equation as it is computationally expensive. At order $4$, the ideal $\mathcal{I}_4$ is zero-dimensional, but still has real solutions (given in Appendix A). As we can see in Appendix A, the previous Lemma does not apply for these eigenvalues. Thus it is necessary to compute completely the $5$-th order variational equation. The coefficients of the series expansion at order $4$ are polynomials in $u_{3,0},u_{3,1},u_{3,2},u_{3,3}$ modulo the ideal $\mathcal{I}_4$. As the algorithm never needs to inverse an element of this ring (which contains zero divisors), it also works at order $5$. The output (after one week of computation) is the ideal $\mathcal{I}_5$, which happen to be improper. Thus $\mathcal{I}_5=<\! 1\!>$. We deduce then

\begin{lem}
Let $V$ be a real meromorphic homogeneous potential of degree $-1$ in dimension $3$. Assume that $c=(1,0,0)$ is a Darboux point of $V$ with multiplier $-1$ and $\hbox{Sp}(\nabla^2 V(c))=\{2,9,20\}$. Then $V$ is not meromorphically integrable.
\end{lem}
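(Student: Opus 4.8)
The plan is to apply the higher-variational-equation obstruction of Morales--Ramis at order $5$, exactly as in the strategy already used for the other pairs of eigenvalues in the list \eqref{eqlast}. The pair $\{9,20\}$ is precisely the one for which Lemma \ref{leminv} fails: neither $u_{3,1}=u_{4,1}=u_{5,1}=0$ nor the symmetric condition $u_{3,2}=u_{4,3}=u_{5,4}=0$ is forced, so the shortcut through the invariant plane $q_3=0$ is unavailable and the full fifth order variational equation must be analyzed. By Theorem \ref{thmmorales} it suffices to produce a single order at which the identity component of the Galois group fails to be Abelian for every real potential carrying these eigenvalues.

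First I would fix the normal form \eqref{eq2} of $V$ at the Darboux point $c=(1,0,0)$ with $\lambda_1=9$, $\lambda_2=20$, so that the only free data are the Taylor coefficients $u_{i,j}$. Next I would run the order-by-order algorithm of \cite{10}: at orders $3$ and $4$ the non-degeneracy condition (\cite{10}, Definition 4.1) holds, so every coefficient $u_{i,j}$ with $i\geq 4$ is eliminated in favour of $u_{3,0},u_{3,1},u_{3,2},u_{3,3}$, and the integrability conditions up to order $4$ cut out the ideal $\mathcal{I}_4$. The point is that $\mathcal{I}_4$ is zero-dimensional but still has real solutions (recorded in Appendix A), so order $4$ alone does not suffice and one genuinely needs order $5$.

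The heart of the argument is then the order-$5$ computation. I would build the fifth variational equation in block-triangular form, solve only the final inhomogeneous block by variation of parameters over the base field $\mathbb{C}(t)[\operatorname{arctanh}(1/t)]$, and impose the commutativity-of-monodromy conditions of \cite{9}. These conditions are polynomial relations among $u_{3,0},\dots,u_{3,3}$ which I would reduce modulo $\mathcal{I}_4$; crucially, the whole computation must be carried out in the quotient ring $\mathbb{C}[u_{3,0},\dots,u_{3,3}]/\mathcal{I}_4$, which contains zero divisors, so the algorithm must never invert a ring element (and indeed it does not). Adjoining the new relations to $\mathcal{I}_4$ yields $\mathcal{I}_5$, and the claim reduces to verifying that $\mathcal{I}_5=\langle 1\rangle$, i.e. that the system is inconsistent. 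Once this holds, no real homogeneous potential of degree $-1$ in dimension $3$ with $\operatorname{Sp}(\nabla^2V(c))=\{2,9,20\}$ can satisfy the necessary conditions, and Theorem \ref{thmmorales} gives non-integrability.

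The main obstacle I anticipate is purely computational rather than conceptual: the fifth variational equation over this base field is very large, the final consistency check for $\mathcal{I}_5$ is expensive (the estimate of roughly a week of computation is consistent with the size of the intermediate objects), and one must work throughout in a ring with zero divisors instead of a field, so the usual normalization and inversion steps are forbidden. The mathematical content is immediate as soon as the algorithm terminates with the improper ideal $\mathcal{I}_5=\langle 1\rangle$.
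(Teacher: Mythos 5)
Your proposal follows essentially the same route as the paper: acknowledging that Lemma \ref{leminv} fails for $\{9,20\}$, running the algorithm of \cite{10} to get the zero-dimensional ideal $\mathcal{I}_4$ with real solutions, then computing the full fifth order variational equation in the quotient ring $\mathbb{C}[u_{3,0},\dots,u_{3,3}]/\mathcal{I}_4$ (never inverting, since the ring has zero divisors) and concluding from $\mathcal{I}_5=\langle 1\rangle$ via Theorem \ref{thmmorales}. The paper's own proof is exactly this computation, including the week-long runtime you anticipated, so your argument is correct and matches it step for step.
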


\end{proof}

\section{The planar $n$-body problem}

Let us prove in this section Theorem \ref{thmmain3}. The main tool will be the following Theorem

\begin{thm} (Pacella \cite{22} Theorem 3.1)
We consider the colinear $n$ body problem with positive masses and $c$ a configuartion with multiplier $-1$, given by potential $V_{n,2}$. Noting $W\in M_n(\mathbb{C})$ with
$$W_{i,j}=\frac{1} {m_i} \frac{\partial^2} {\partial q_i \partial q_j} V_{n,2}$$
the spectrum of $W$ is of the form $Sp(W)=\{0,2,\lambda_1,\dots,\lambda_{n-2}\}$ with $\lambda_i>2,\;i=1\dots n-2$.
\end{thm}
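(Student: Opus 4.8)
The plan is to reduce the statement to a positivity property of a single quadratic form and to exploit the attractive, positive-mass structure of the colinear interaction. Write $M=\mathrm{diag}(m_1,\dots,m_n)$ and let $H$ be the symmetric matrix with entries $H_{ij}=\partial^2 V_{n,2}/\partial q_i\partial q_j$ appearing in the definition of $W$, so that $W=M^{-1}H$. Since $W$ is conjugate to the symmetric matrix $M^{-1/2}HM^{-1/2}$, its spectrum is real, it is self-adjoint for the mass inner product $\langle u,v\rangle_M=u^{T}Mv$, and I may argue with the form $v\mapsto v^{T}Hv$ relative to $\langle\cdot,\cdot\rangle_M$. Two eigenvalues are immediate. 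Translation invariance of $V_{n,2}$ gives $H\mathbf 1=0$ with $\mathbf 1=(1,\dots,1)$, so $0\in Sp(W)$; and summing the multiplier condition $\frac{1}{m_i}\partial_{q_i}V(c)=-c_i$, i.e. $\nabla V(c)=-Mc$, over $i$ forces the center of mass $\sum_i m_ic_i$ to vanish. Euler's identity for the degree $-1$ homogeneity, $\nabla^2V(c)\,c=-2\nabla V(c)$, then yields $Hc=2Mc$, hence $Wc=2c$ and $2\in Sp(W)$. The remaining $n-2$ eigenvalues are carried by the $\langle\cdot,\cdot\rangle_M$-orthogonal complement $T=\{v:\mathbf 1^{T}Mv=0,\ c^{T}Mv=0\}$, which is $W$-invariant, and the content of the theorem is that the restriction of $W$ to $T$ has all its eigenvalues strictly above $2$, equivalently that $Q(v)=v^{T}(H-2M)v$ is positive definite on $T$.

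Next I would make $H$ explicit. For the colinear $1/r$ interaction one computes $\partial^2_{ij}V=-w_{ij}$ for $i\neq j$ and $\partial^2_{ii}V=\sum_{j\neq i}w_{ij}$, where $w_{ij}=2m_im_j|c_i-c_j|^{-3}$. Because all masses are positive, every weight $w_{ij}$ is strictly positive, so $H$ is a weighted graph Laplacian and
\[
v^{T}Hv=\sum_{1\le i<j\le n}w_{ij}(v_i-v_j)^2 .
\]
This already gives $H\succeq0$ with kernel $\mathbb{R}\mathbf 1$, hence $W\succeq0$; positivity of the masses is exactly what powers this step, and is also what makes the statement fail for sign-indefinite masses. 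The central configuration identity $Hc=2Mc$ reads componentwise $\sum_{j\neq i}w_{ij}(c_i-c_j)=2m_ic_i$, which is the algebraic relation I would feed into the estimate for $Q$.

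The decisive step is to prove $Q(v)=\sum_{i<j}w_{ij}(v_i-v_j)^2-2\sum_i m_iv_i^2>0$ for every nonzero $v\in T$. Using the central-configuration relation to substitute $2m_i=c_i^{-1}\sum_{j\neq i}w_{ij}(c_i-c_j)$ and symmetrizing in $i\leftrightarrow j$, the quantity $2\sum_i m_iv_i^2$ becomes $\tfrac12\sum_{i\neq j}w_{ij}(c_i-c_j)\bigl(v_i^2/c_i-v_j^2/c_j\bigr)$, so that $Q(v)=\tfrac12\sum_{i\neq j}w_{ij}\bigl[\,(v_i-v_j)^2-(c_i-c_j)(v_i^2/c_i-v_j^2/c_j)\,\bigr]$. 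For a pair with $c_i,c_j$ of the same sign the bracket collapses to the perfect square $\bigl(\sqrt{c_j/c_i}\,v_i-\sqrt{c_i/c_j}\,v_j\bigr)^2\ge0$. I expect this to be the main obstacle: since the center of mass sits at the origin, the coordinates $c_i$ necessarily change sign, so on the pairs that straddle the origin the bracket is indefinite and the naive termwise sum-of-squares decomposition breaks down — one cannot conclude from positivity of the weights alone. Resolving it requires the finer colinear structure, namely the total ordering $c_1<\dots<c_n$ with the attendant monotonicity of the interaction, combined with the two linear constraints defining $T$. This is precisely where I would invoke Pacella's equivariant Morse-theoretic argument \cite{22}, which controls the mixed-sign contributions and delivers the strict inequality $\lambda_i>2$. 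Together with the two trivial eigenvalues $0$ and $2$, this gives $Sp(W)=\{0,2,\lambda_1,\dots,\lambda_{n-2}\}$ with $\lambda_i>2$ for all $i$.
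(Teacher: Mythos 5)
Your reduction is sound as far as it goes: the symmetrization of $W=M^{-1}H$, the eigenvalue $0$ from translation invariance, the eigenvalue $2$ from Euler's identity for the degree $-1$ homogeneity combined with the multiplier condition, the $W$-invariance of the $M$-orthogonal complement $T$, and the identification of $H$ as a weighted graph Laplacian with positive weights $w_{ij}=2m_im_j|c_i-c_j|^{-3}$ are all correct, and they correctly reduce the theorem to the positive definiteness of $Q(v)=v^{T}(H-2M)v$ on $T$. But that positive definiteness \emph{is} the theorem; everything before it only yields the trivial part of the spectrum and the soft bound $\lambda_i\ge 0$. Your proposed route provably cannot close the gap: for a pair with $c_ic_j<0$ the bracket in your decomposition is not merely ``indefinite'' but equals $-\bigl(\sqrt{a}\,v_i+v_j/\sqrt{a}\bigr)^2\le 0$ with $a=|c_j/c_i|$, so every pair straddling the origin contributes non-positively, and such pairs always exist because the center of mass sits at the origin. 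There is also a secondary flaw: the substitution $2m_i=c_i^{-1}\sum_{j\neq i}w_{ij}(c_i-c_j)$ requires $c_i\neq0$ for every $i$, which fails whenever a body sits exactly at the center of mass (e.g. three equal colinear masses). At the decisive moment you write that you would ``invoke Pacella's equivariant Morse-theoretic argument,'' i.e. the inequality $\lambda_i>2$ is imported wholesale from the reference; the proposal is therefore an (honest and correct) reformulation, not a proof.

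For comparison, the paper does not prove this statement either: it is quoted as an external result (Pacella \cite{22}, Theorem 3.1), and the proof environment that follows it in the paper is in fact the proof of Theorem \ref{thmmain3}. There the colinear spectral bound is used as a black box: at a colinear central configuration the planar Hessian block-decomposes as $\mathrm{diag}\bigl(A,-\tfrac12A\bigr)$, so Pacella's bound $\lambda_i>2$ on $A$ forces planar eigenvalues $-\tfrac12\lambda_i<-1$, violating the Morales--Ramis condition and giving non-integrability. So the hard step you leave open is exactly the step the paper also outsources to \cite{22}; the difference is that the paper presents it explicitly as a cited theorem, whereas your text presents a partial argument whose essential core still rests on that same citation, and whose elementary sum-of-squares strategy can be seen, from your own computation, to be insufficient on its own.
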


\begin{proof}
For the $n$ body problem in the plane, the Hessian matrix to compute is of the form $W\in M_{2n}(\mathbb{C})$
$$W_{i,j}=\frac{1} {m_i} \frac{\partial^2} {\partial q_i \partial q_j} V_{n,2}$$
with the notation $m_{i+n}=m_i$. Computing this matrix at a colinear central configuration, we obtain a matrix of the form
$$W=\left(\begin{array}{cc} A&0\\ 0& -\frac{1}{2} A \\ \end{array}\right)$$
Due to Pacella Theorem, we have moreover that $Sp(A)=\{0,2,\lambda_1,\dots,\lambda_{n-2}\}$ with $\lambda_i>2,\;i=1\dots n-2$. Then the spectrum of $W$ is of the form
$$\{0,2,\lambda_1,\dots,\lambda_{n-2},0,-1,-\textstyle{\frac{1}{2}}\lambda_1,\dots,-\textstyle{\frac{1}{2}}\lambda_{n-2} \}$$
According to integrability condition of the Morales-Ramis Theorem \ref{thmmorales}, all allowed eigenvalues for integrability are greater or equal to $-1$. These conditions cannot be satisfied as $-\textstyle{\frac{1}{2}}\lambda_i< -1,\;i=1\dots n-2$. Thus the planar $n$ body problem with positive masses is not meromorphically integrable.
\end{proof}

\section{Integrable $n$-body problems}

In this section, we will progress in the opposite way. Instead of trying to prove non integrability, we come from already known integrable cases, and we try to know if after some transformations, they correspond to particular cases of the $n$ body problem.

\begin{prop}\label{prop1}
The potential $V$ in $n$ variables $q_1,\dots,q_n$
\begin{equation}\label{potint}
V(q)=\sum\limits_{l=1}^p a_l\left(\sum\limits_{j=j_l+1}^{j_{l+1}} q_j^2\right)^{-1/2}
\end{equation}
with $0=j_1<j_2<\dots <j_{p+1}\leq n,\; a_l\in\mathbb{C}$ is integrable in the Liouville sense. For any complex orthogonal matrix $R\in \mathbb{O}_n(\mathbb{C})$, the potential $V(Rq)$ is integrable in the Liouville sense.
\end{prop}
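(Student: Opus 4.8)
The plan is to exploit the block structure of the potential \eqref{potint}. With the standard kinetic energy the associated Hamiltonian is $H=\frac{1}{2}\sum_{i=1}^n p_i^2+V(q)$, and since the $l$-th summand of $V$ depends only on the block of coordinates $q_{j_l+1},\dots,q_{j_{l+1}}$, the Hamiltonian decouples as $H=\sum_{l=1}^p H_l+\frac{1}{2}\sum_{j>j_{p+1}}p_j^2$, where $H_l=\frac{1}{2}\sum_{j=j_l+1}^{j_{l+1}}p_j^2+a_l\,r_l^{-1}$ and $r_l=(\sum_{j=j_l+1}^{j_{l+1}}q_j^2)^{1/2}$. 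Because distinct blocks involve disjoint sets of canonical variables, any first integral attached to one block automatically Poisson-commutes with any function of another block and with the free momenta $p_j$, $j>j_{p+1}$. Thus it suffices to produce, for each block separately, a maximal involutive family of functionally independent first integrals, and then take their union together with the $n-j_{p+1}$ free momenta.

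I would then treat a single block, that is, a central-force problem $H_l=\frac{1}{2}\lvert p\rvert^2+a_l\lvert q\rvert^{-1}$ in dimension $d_l=j_{l+1}-j_l$. The key objects are the angular momenta $L_{ab}=q_ap_b-q_bp_a$, which Poisson-commute with $H_l$ because $H_l$ is invariant under the rotation group acting on the block. From these I form the nested Casimirs $\ell_k=\sum_{1\le a<b\le k}L_{ab}^2$ for $k=2,\dots,d_l$, each being the total squared angular momentum of the first $k$ coordinates of the block. Since $\ell_m$ is the quadratic Casimir of $\mathfrak{so}(m)$, it Poisson-commutes with every element of $\mathfrak{so}(m)$, hence with $\ell_k$ for $k\le m$ (the classical Gelfand--Tsetlin chain), and each $\ell_k$ commutes with $H_l$. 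This yields $d_l$ integrals $H_l,\ell_2,\dots,\ell_{d_l}$ in involution; establishing their generic functional independence completes the Liouville integrability of the block (alternatively one may invoke separation of variables of $H_l$ in spherical coordinates, which produces the same $d_l$ separation constants). Assembling the blocks, the union of these families gives $\sum_l d_l=j_{p+1}$ integrals, and together with the free momenta $p_{j_{p+1}+1},\dots,p_n$ we obtain $n$ functionally independent integrals in involution, so $V$ itself is integrable in the Liouville sense.

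For the statement on $V(Rq)$ with $R\in\mathbb{O}_n(\mathbb{C})$, the plan is to realize the linear change of variables as a canonical transformation. I would take the cotangent lift $\tilde q=Rq$, $\tilde p=Rp$: because $R^TR=I$ one checks $\sum_i d\tilde q_i\wedge d\tilde p_i=\sum_i dq_i\wedge dp_i$, so the transformation is symplectic, and this computation uses only $R^TR=I$ and therefore holds verbatim for complex orthogonal $R$. The same identity gives $\frac{1}{2}\lvert\tilde p\rvert^2=\frac{1}{2}\lvert p\rvert^2$, so the Hamiltonian $\frac{1}{2}\lvert p\rvert^2+V(Rq)$ in the variables $(q,p)$ coincides with $\frac{1}{2}\lvert\tilde p\rvert^2+V(\tilde q)$ in the variables $(\tilde q,\tilde p)$. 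Since a canonical transformation carries an involutive family of independent integrals to an involutive family of independent integrals (the Poisson bracket being invariant), the integrability established for $V$ transfers to $V(Rq)$.

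The main obstacle I anticipate is the careful justification of the involutivity and functional independence of the angular-momentum Casimirs $\ell_2,\dots,\ell_{d_l}$, and of the meaning of Liouville integrability, in the complex meromorphic setting, rather than the decoupling or the orthogonal-invariance step, which are essentially formal. Everything else reduces to the elementary observations that disjoint blocks of canonical variables commute and that $R^TR=I$ preserves both the symplectic form and the kinetic energy.
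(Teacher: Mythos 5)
Your proposal is correct and takes essentially the same route as the paper: decompose $V$ into decoupled blocks, observe that each block is a rotation-invariant (central-force) potential and hence integrable, and transfer integrability through the symplectic lift of the orthogonal change of variables. The only difference is one of detail — you make explicit the commuting angular-momentum Casimirs (Gelfand--Tsetlin chain) and the verification that $R^TR=I$ preserves both the symplectic form and the kinetic energy, steps the paper leaves implicit.
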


Here the kinetic part is assumed to be $T(p)=\lVert p \lVert^2/2$ and so the potential $V$ is associated to a Hamiltonian system with $H(p,q)=T(p)+V(q)$. 

\begin{proof}
The potential $V$ of equation \eqref{potint} is a decoupled linear combination
$$V(q)= \sum\limits_{l=1}^p a_l V_l(q_{j_l+1},\dots, q_{j_{l+1}}),\quad\;\; V_l(q_{j_l+1},\dots, q_{j_{l+1}})=\left(\sum\limits_{j=j_l+1}^{j_{l+1}} q_j^2\right)^{-1/2}$$
These potentials are invariant by the rotation group $\mathbb{O}_{j_{l+1}-j_l}(\mathbb{C})$ and so are integrable. Thus the potential $V$ is integrable. As integrability is preserved by any orthogonal transformation, the potential $V(Rq)$ will also be integrable in the Liouville sense.
\end{proof}

Although these potentials seem to have a quite simple expression, the orthogonal transformation $R$ can mix the variables (the decomposition of $V$ is not necessarily conserved). However, the potential can always be written
\begin{equation}\label{eq00}
V(q)=\sum\limits_{l=1}^p a_lQ_l(q)^{-1/2}
\end{equation}
with $Q_i$ quadratic forms. And as an orthogonal transformation conserves the rank of these quadratic forms, we have moreover $\sum_{l=1}^p \hbox{rank}\;Q_l \leq n$.

The Hamiltonian of the $n$ body problem in dimension $d$ can be written
$$H(p,q)=\sum\limits_{i=1}^n \frac{\lVert p_i\lVert^2}{2m_i} +\sum\limits_{i<j} m_im_j\left(\sum\limits_{k=1}^d (q_{i,k}-q_{j,k})^2\right)^{-1/2} $$
The kinetic part of $H_{n,d}$ is not $\lVert p\lVert^2/2$ (as in Proposition \ref{prop1}). To transform the kinetic part to the standard one, we only have to make the variable change $q_{i,k}\mapsto q_{i,k}/\sqrt{m_i}$. The potential now becomes
$$\tilde{V}_{n,d}=\sum\limits_{i<j} m_im_j\left(\sum\limits_{k=1}^d (q_{i,k}/\sqrt{m_i}-q_{j,k}/\sqrt{m_j})^2\right)^{-1/2}$$
This expression is similar to equation \eqref{eq00}, but there could be too many quadratic forms. After reduction by translation, the potential becomes a $(n-1)d$-dimensional potential. There are $n(n-1)/2$ independent quadratic forms, and to be of the form \eqref{eq00}, we need $n(n-1)d/2\leq (n-1)d$, which implies $n\leq 2$. So in general, the potential $\tilde{V}_{n,d}$ is not of the form \eqref{potint}, but it could be for some restricted cases.

\begin{defi}
We say that a vector space $W\subset \mathbb{R}^{nd}$ is an invariant vector space if  
$$\forall q\in W,\;\;\; \nabla V_{n,d} \in W$$
\end{defi}

This definition generalizes central configurations, which correspond to the case $\dim  W=1$. Needless to say, as it is more difficult to find the invariant vector spaces than finding central configurations, we will not try to be exhaustive in this search. Let us remark that we already know some invariant vector spaces as the isosceles $3$-body problem, the collinear $3$-body problem (which is an invariant vector space of the planar $3$ body problem). Several others can be found using symmetries.

Let us now establish some rules to find vector spaces $W$ and masses $m$ such that $\left.V\right|_W$ is of the form \eqref{potint} up to an orthogonal transformation. A necessary condition is that it can be written under the form \eqref{eq00}. So in the expression of $\left.V\right|_W$ we should try to have the lowest possible number of independent quadratic forms (corresponding to mutual distances) with the lowest possible rank.

Remark that if we allow negative masses, some terms in the sum in $V_{n,d}$ could cancel each other, thus reducing greatly the number of quadratic forms. So it seems that finding examples will be easier when negative masses are allowed. And indeed, all interesting examples we will find require a negative mass. Let us now prove Theorem \ref{thmmain4}.

\subsection{An integrable $5$ body problem}

\begin{proof}
The vector space $W$ is of dimension $10-6=4$. The mass $-1/4$ is at the origin which is the center of mass of the system. On the vector space $W$, the $2$-nd and the $4$-th body are symmetric in respect to the origin, as well as the $3$-th and $5$-th bodies. Due to these symmetries, the vector space $W$ is invariant. We can thus restrict our potential to $W$. Now computing the potential $V$ on $W$ we find $\left.V_{5,2}\right|_W=$
\begin{equation}\label{potres1}
\left((q_{2,1}-q_{3,1})^2+(q_{2,2}-q_{3,2})^2\right)^{-1/2} +\left((q_{2,1}+q_{3,1})^2+(q_{2,2}+q_{3,2})^2\right)^{-1/2}
\end{equation}
There are only two quadratic forms with rank two for each. As $2+2=4=\dim W$, we are under the form \eqref{eq00}. We can now try to put this potential under the form \eqref{potint}. This is done by the following orthogonal transformation
$$R=\frac{1}{\sqrt{2}} \left(\begin{array}{cccc} 1&-1&0&0\\0&0&1&-1\\1&1&0&0\\0&0&1&1 \end{array}\right)$$
acting on $q_{2,1},q_{3,1},q_{2,2},q_{3,2}$ in this order. Thus $\left.V_{5,2}\right|_W$ is integrable.
\end{proof}

On $W$, the bodies are always on the edges of a parallelogram whose center is the origin (where lies the mass $-1$). Looking at the forces acting on the bodies, we see that they are not attracted by the center at all (because the repulsion of the central mass $-1/4$ exactly compensates the attraction of the opposite masses $1$ at twice the distance). The masses are then only attracted by their neighbours. Looking at the expression of the potential \eqref{potres1}, we see that the force acting on the center of vertices of the parallelogram (which are $(\pm q_{2,1}\pm q_{3,1},\pm q_{2,2}\pm q_{3,2})$) is toward the center. Thus the motion of these centers are conics with focus at the origin.\\

\newpage

\begin{minipage}{5.1cm}
\centering
\includegraphics[width=4.9cm]{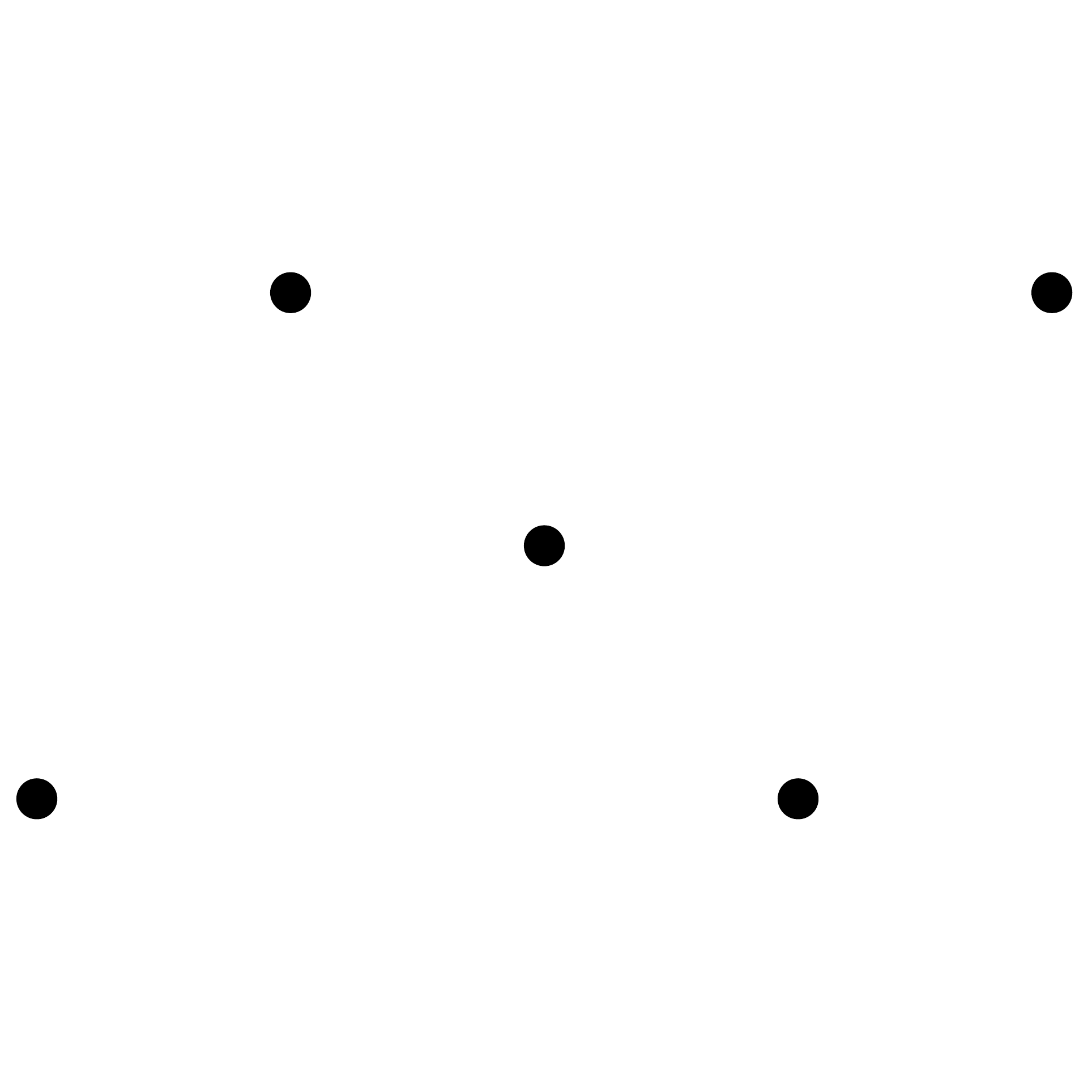}\\
\end{minipage}
\hfill
\begin{minipage}{6cm}
\centering
\includegraphics[width=5.8cm]{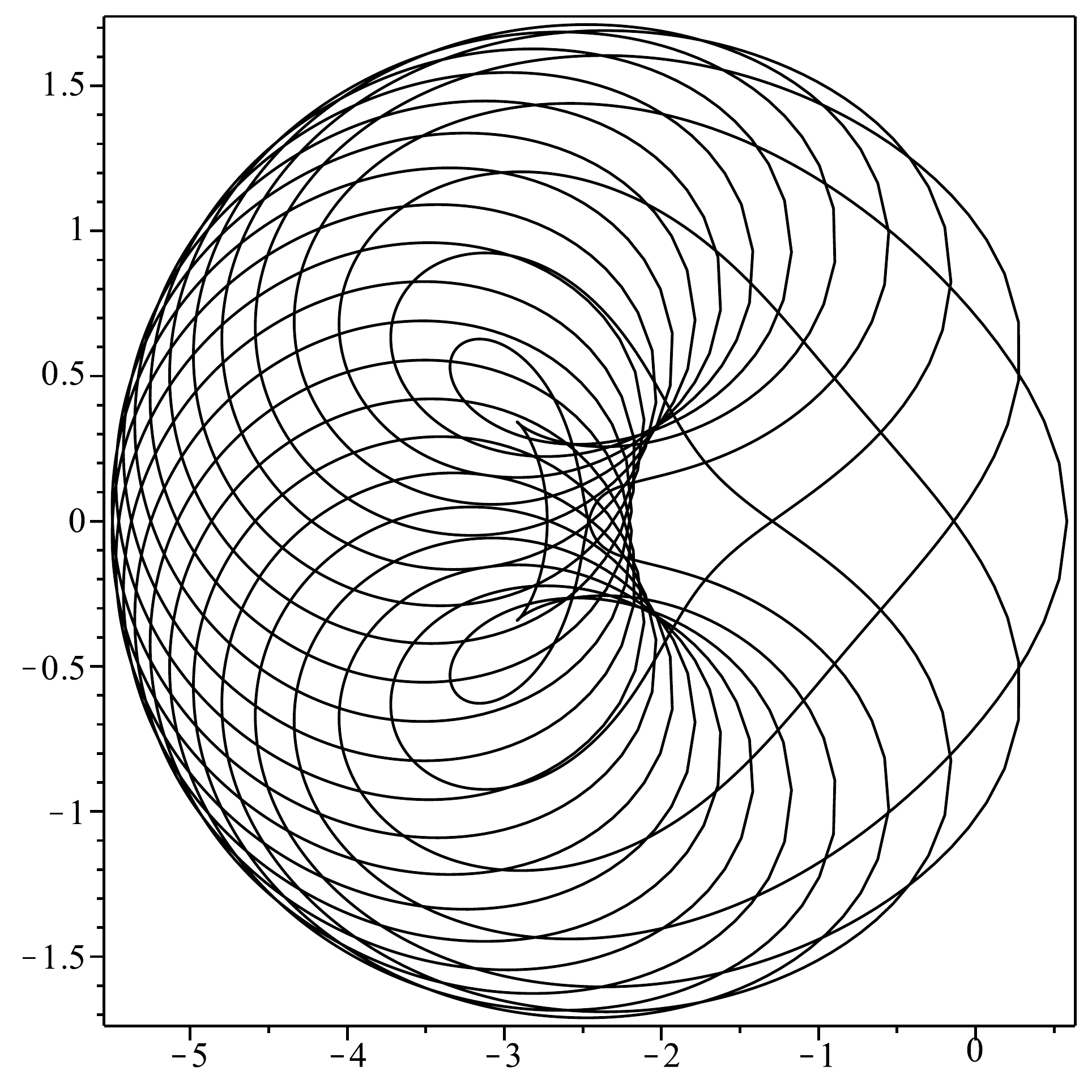}\\
\end{minipage}
\begin{minipage}{6cm}
\centering
\includegraphics[width=5.8cm]{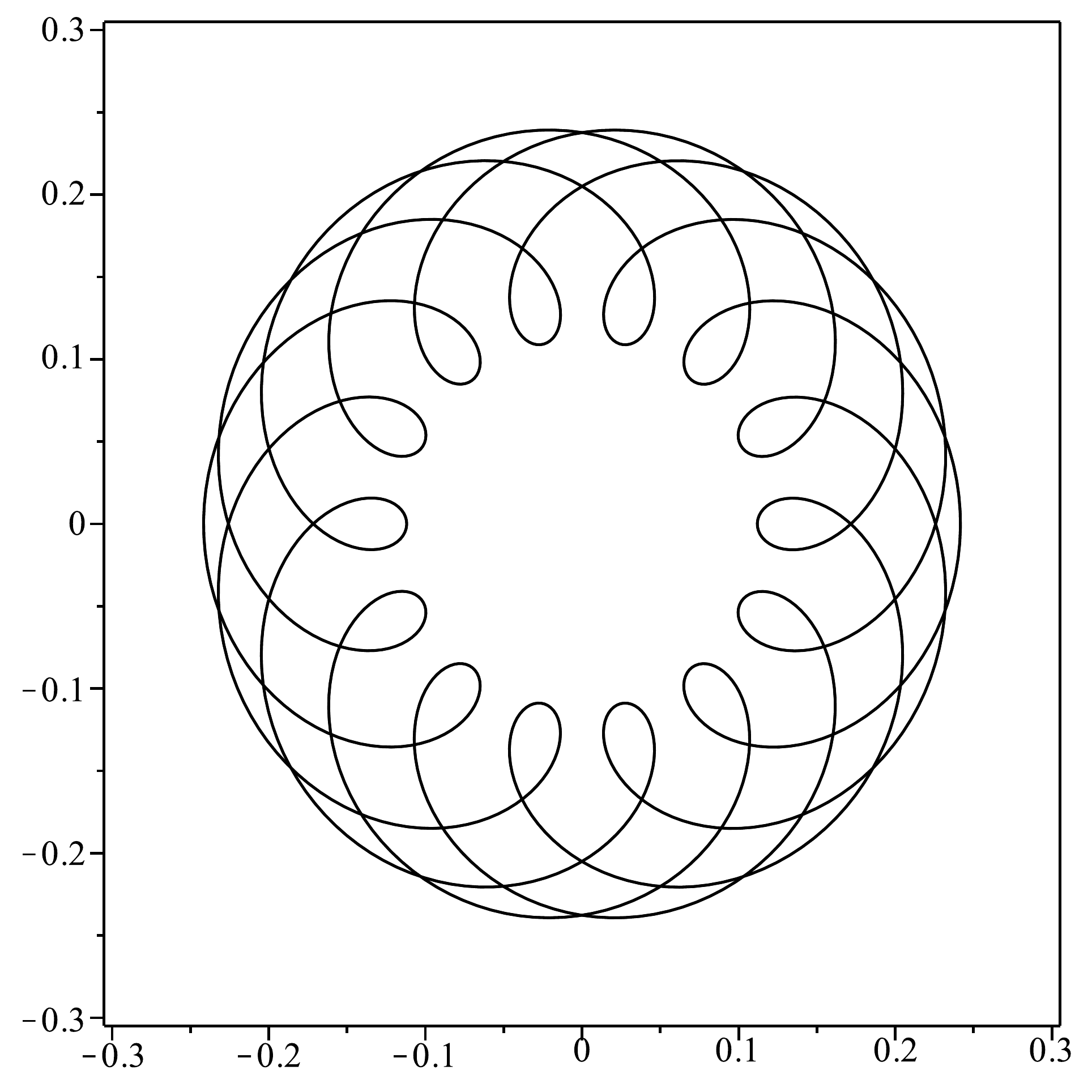}\\
\end{minipage}
\hfill
\begin{minipage}{6cm}
\centering
\includegraphics[width=5.8cm]{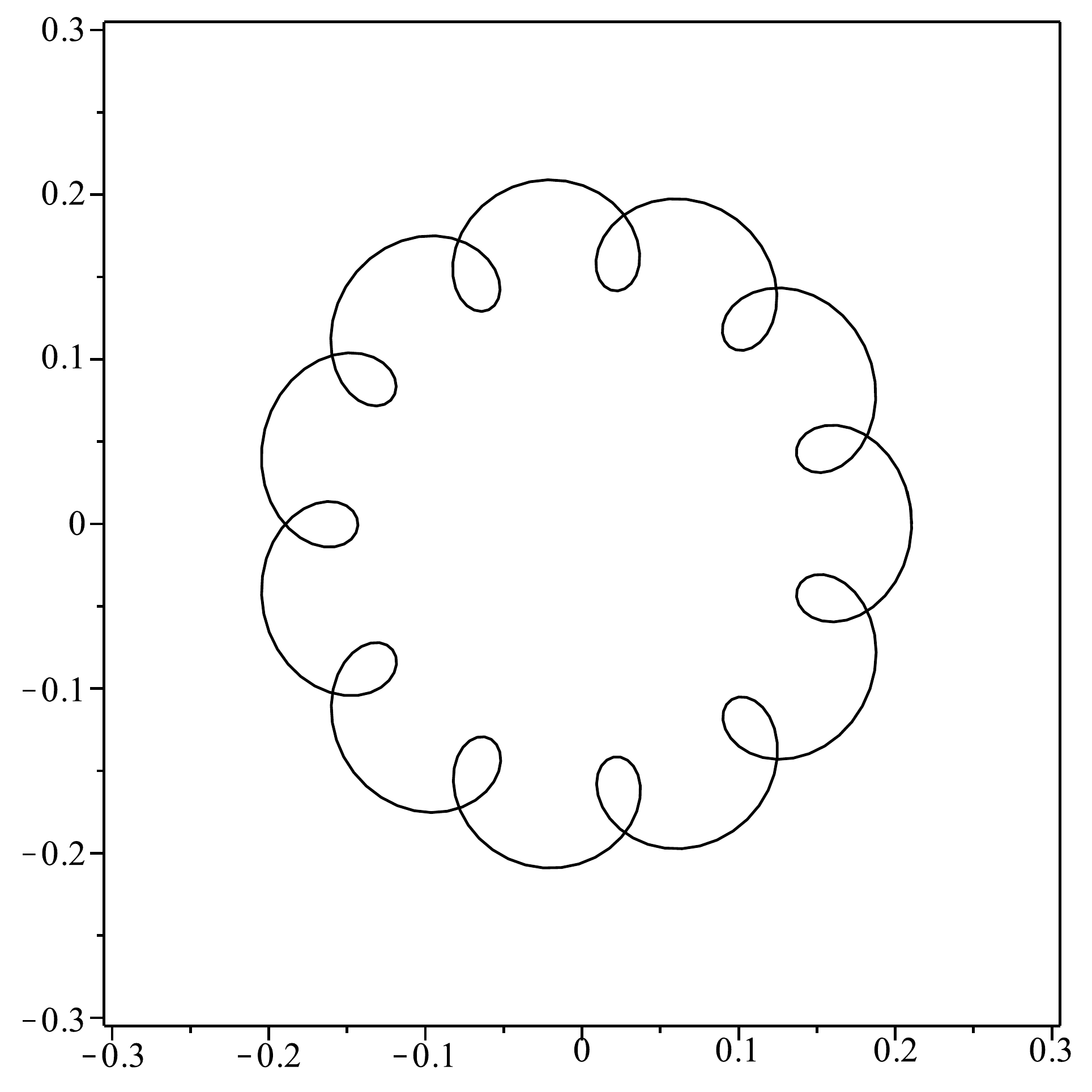}\\
\end{minipage}\\
\begin{center}
\textit{The configuration of the $5$ bodies and examples of motions of a body of mass $1$ with ellipses with rational period ratio.}
\end{center}

So the motion of a body of mass $1$ is the composition of to conic motions. The body has a conic motion whose focus is the center of mass of two bodies of mass $1$, and this center of mass has a conic motion with focus at the origin. If the two conics are ellipses with rational period ratio, this leads to (algebraic) choreographies of the two bodies.

\subsection{An integrable $n+3$ body problem}

\begin{proof}
The space $W$ is of dimension $3$. The forces between the $n$ cocyclic masses and the central mass exactly compensate. The forces between the $3$ last masses also compensate (as this is also an absolute equilibrium). So the only forces between the bodies are between the last two masses and the cocyclic masses. But due to symmetry and the fact that the masses are cocyclic, this force only involves one distance. Thus the potential is of the form
$$V=\gamma\left(q_{1,1}^2+q_{1,2}^2+q_{n+2,3}^2\right)^{-1/2}$$
This potential corresponds to a central force, and thus is integrable.
\end{proof}

Let us look at an example. The most known cyclic central configuration is the regular polygon. We have $m_1=\dots=m_n=1$. The central mass (chosen to produce an absolute equilibrium) and the potential are then
$$-\alpha=-\frac{1}{2}\sum\limits_{k=1}^{n-1} \sin\left(\frac{k\pi}{n}\right)^{-1}\qquad V=4n\alpha\left(q_{1,1}^2+q_{1,2}^2+q_{n+2,3}^2\right)^{-1/2}$$
The motion is the following: the $n$ bodies describe conics in the plane, and the two symmetrical last bodies move along the vertical line. Remark that the motion of the bodies on the vertical line is not determined by the motion of the bodies in the plane (this is not a rigid motion as in the case of central configurations). This vertical motion depends on the ``inclination'' of the conic orbit chosen for the above potential.

\begin{minipage}{5cm}
\centering
\includegraphics[width=4.5cm]{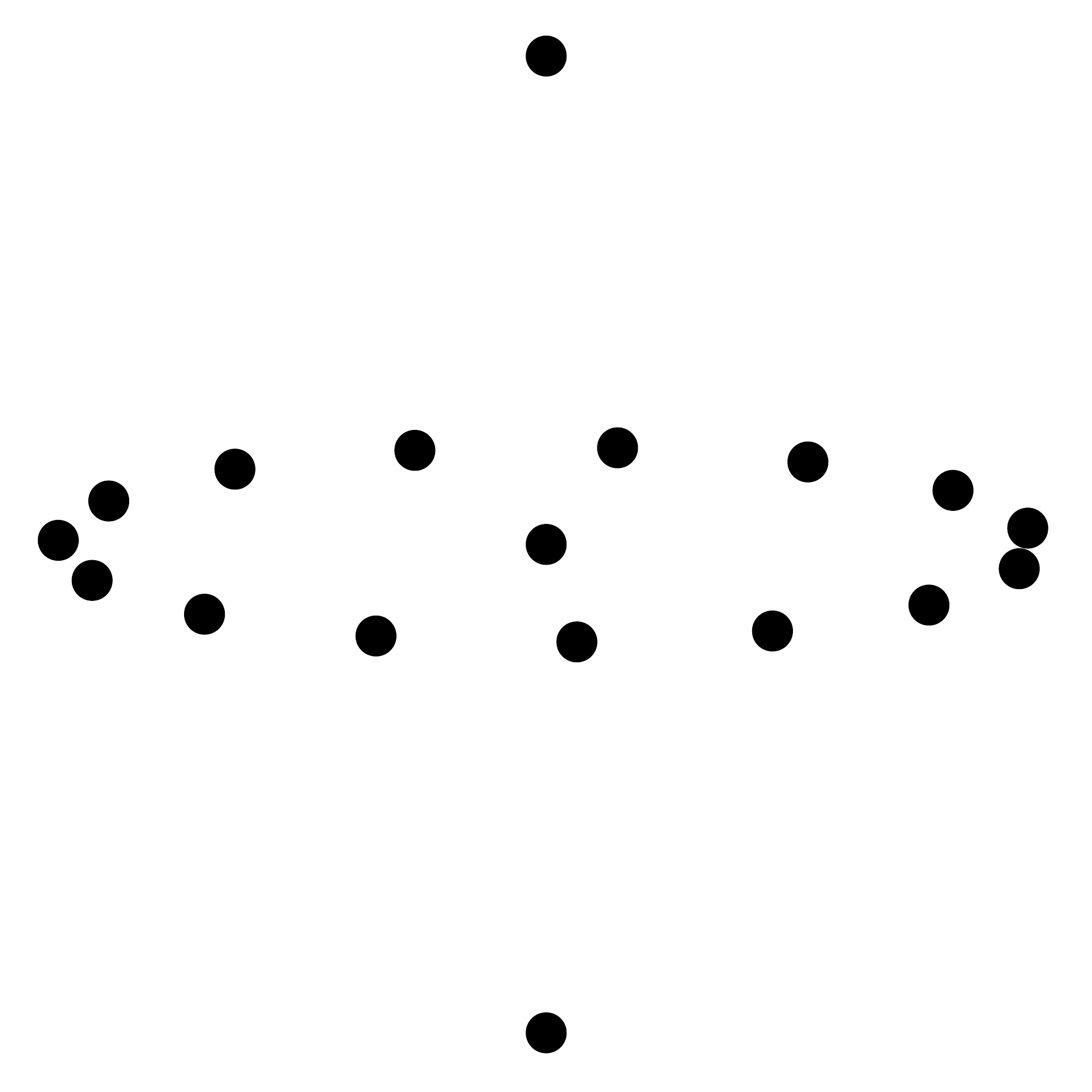}\\
\end{minipage}
\hfill
\begin{minipage}{7cm}
\centering
\includegraphics[width=6.5cm]{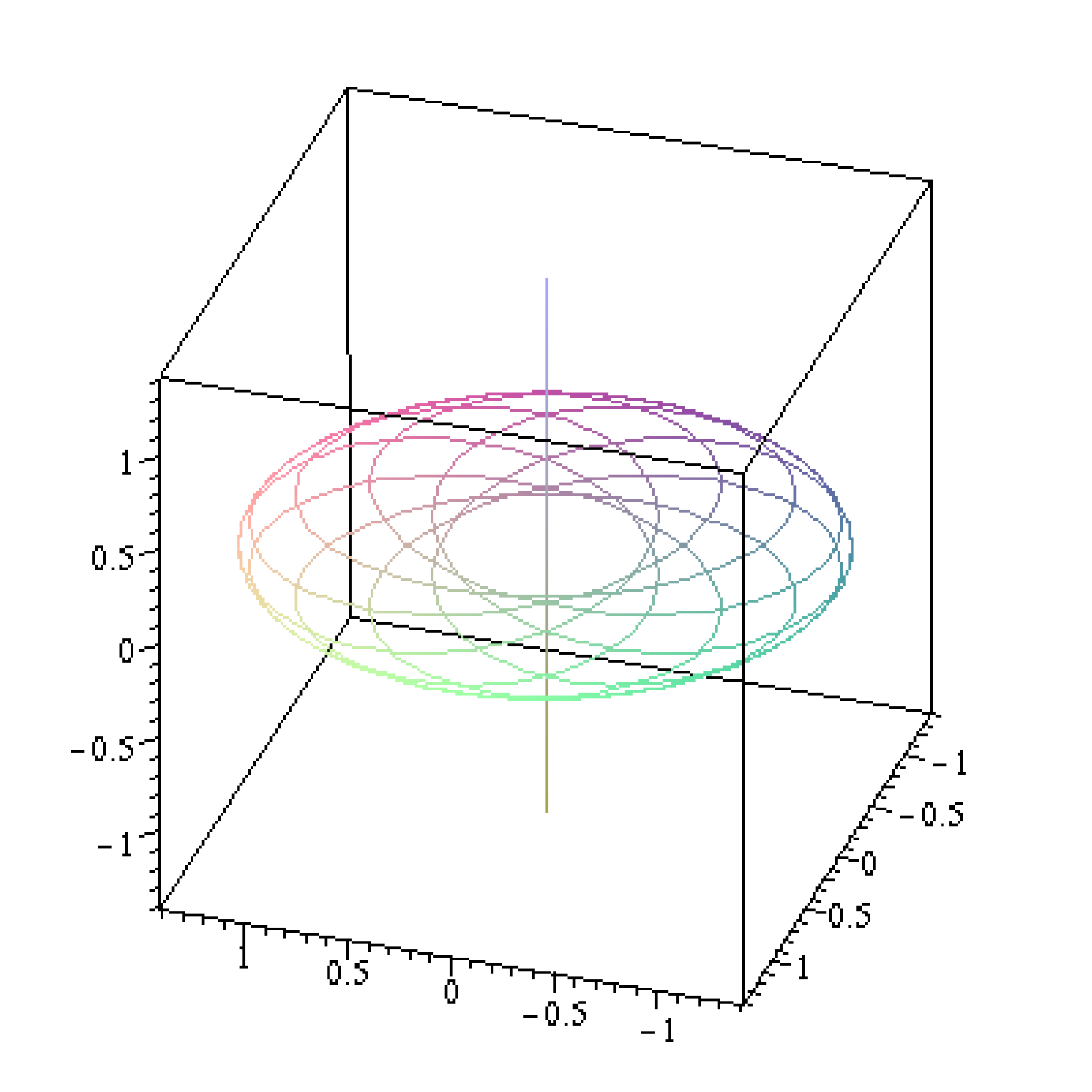}\\
\end{minipage}
\textit{A configuration of the $n+3$ bodies with a regular polygon and an example of motion of the bodies with an ellipse with non-zero inclination.}

\appendix
\section{Integrable series expansions at order $4$}

The results are written in the following way. We give a series expansion of the form \eqref{eq2} of $V$, such that the $k$-th order variational equation of $V$ near $c$ has a virtually Abelian Galois group if and only if $(u_{3,0},u_{3,1},u_{3,2},u_{3,3})\in \mathcal{I}_k^{-1}(0)$. The sequence of ideals $\mathcal{I}_k$ is growing, and we compute these conditions up to order $4$. For the eigenvalues in \eqref{eqlast}, thay are geven below. Remark that the Hilbert dimension of the ideals $\mathcal{I}_4$ greatly depend on eigenvalues, and that sometimes exceptional possible solutions appear in the $4$-th order variational equation. In particular, the restriction of these series expansions to the planes in $(q_1,q_2)$ and $(q_1,q_3)$ does not always lead to integrable series expansion at order $4$ on these planes. 

\begin{center}
\begin{turn}{-90}
\hspace{-0.5cm}
\begin{minipage}{22.5cm}
\begin{tiny}
$q_1^{-1}\left(1+\frac{5}{2}\frac{q_2^2}{q_1^2}+\frac{44}{2}\frac{q_3^2}{q_1^2}+u_{3,1}\frac{q_2^2q_3}{q_1^3}+
\left(\frac{7}{459}u_{3,1}^2+\frac{175}{24}\right)\frac{q_2^4}{q_1^4}+  \left(-\frac{734}{7775}u_{3,1}^2+\frac{130779}{622}\right)\frac{q_2^2q_3^2}{q_1^4}+
\frac{682538736}{1082611}\frac{q_3^4}{q_1^4}+ O\left( \frac{(q_1,q_2)^6}{q_1^6} \right)\right)\;\;
\mathcal{I}_2=\mathcal{I}_3=<\!u_{3,0},u_{3,2},u_{3,3}\!>,\; \mathcal{I}_4=<\!u_{3,0},u_{3,1},u_{3,2},u_{3,3}\!>$\\
$q_1^{-1}\left(1+\frac{5}{2}\frac{q_2^2}{q_1^2}+\frac{20}{2}\frac{q_3^2}{q_1^2}+u_{3,3}\frac{q_3^3}{q_1^3}+
\frac{175}{24}\frac{q_2^4}{q_1^4}+ \frac{13575}{142}\frac{q_2^2q_3^2}{q_1^4}+ 
\left(\frac{105347}{840700}u_{3,3}^2+\frac{153200}{1201} \right)\frac{q_3^4}{q_1^4}+ O\left( \frac{(q_1,q_2)^6}{q_1^6} \right)\right)\qquad \mathcal{I}_2=\mathcal{I}_3=<\!u_{3,0},u_{3,1},u_{3,2}\!> \quad \mathcal{I}_4=<\!u_{3,0},u_{3,1},u_{3,2},u_{3,3}\!>$\\
$q_1^{-1}\left(1+ \frac{5}{2}\frac{q_2^2}{q_1^2}+\frac{35}{2}\frac{q_3^2}{q_1^2}+u_{3,3}\frac{q_3^3}{q_1^3}+
\frac{175}{24}\frac{q_2^4}{q_1^4}+ \frac{13575}{142}\frac{q_2^2q_3^2}{q_1^4}+
\left(\frac{108970043}{1516693800}u_{3,3}^2+ \frac{8028474825}{20222584} \right)\frac{q_3^4}{q_1^4}+ O\left( \frac{(q_1,q_2)^6}{q_1^6} \right)\right)\qquad \mathcal{I}_2=\mathcal{I}_3=<\!u_{3,0},u_{3,1},u_{3,2}\!> \quad \mathcal{I}_4=<\!u_{3,0},u_{3,1},u_{3,2},u_{3,3}\!>$\\
$q_1^{-1}\left(1+ \frac{5}{2}\frac{q_2^2}{q_1^2}+\frac{54}{2}\frac{q_3^2}{q_1^2}+u_{3,3}\frac{q_3^3}{q_1^3}+
\frac{175}{24}\frac{q_2^4}{q_1^4}+ \frac{15646365}{60604}\frac{q_2^2q_3^2}{q_1^4}+ 
\left(\frac{663499046129}{14245925733270}u_{3,3}^2+\frac{93293112339}{97889959} \right)\frac{q_3^4}{q_1^4}+
\frac{119177438935}{9191020828}\frac{q_2^2q_3^3}{q_1^5}+\right.$\\ $\left. \left(\frac{11164016229220693628221}{5008255522459502086348344}u_{3,3}^3+\frac{17921365151237750718}{239428834674960487}u_{3,3} \right)\frac{q_3^5}{q_1^5}+ O\left( \frac{(q_1,q_2)^6}{q_1^6} \right)\right)\qquad \mathcal{I}_2=\mathcal{I}_3=\mathcal{I}_4=<\!u_{3,0},u_{3,1},u_{3,2}\!>$\\
$q_1^{-1}\left(1+ \frac{5}{2}\frac{q_2^2}{q_1^2}+\frac{9}{2}\frac{q_3^2}{q_1^2}+u_{3,1}\frac{q_2^2q_3}{q_1^3}+u_{3,3}\frac{q_3^3}{q_1^3}+
\left(\frac{7}{324}u_{3,1}^2+\frac{175}{24}\right)\frac{q_2^4}{q_1^4}+ \left(\frac{19}{123}u_{3,1}^2+\frac{95}{246}u_{3,1}u_{3,3}+\frac{7965}{164}\right)\frac{q_2^2q_3^2}{q_1^4}+ 
\left(\frac{3289}{12060}u_{3,3}^2+\frac{13311}{536} \right)\frac{q_3^4}{q_1^4}+\left(\frac{3497470290125}{504750025968}u_{3,1}-\frac{664048660625}{504750025968}u_{3,3}\right) \frac{q_2^4q_3}{q_1^5}+\right. $\\ $\left. \left( \frac{2220239578205}{252375012984}u_{3,1}- \frac{921441864205}{252375012984}u_{3,3} \right)  \frac{q_2^2q_3^3}{q_1^5}+
\left(-\frac{187329008582632851}{386510093636424240}u_{3,1}+\frac{775252202045235367}{193270046818212120}u_{3,3}\right)\frac{q_3^5}{q_1^5}+ O\left( \frac{(q_1,q_2)^6}{q_1^6} \right)\right)$\\
$\mathcal{I}_2=\mathcal{I}_3=<\!u_{3,0},u_{3,2},\; \mathcal{I}_4=\left\langle\!u_{3,0},u_{3,2},u_{3,3}^5+\frac{189924461418873225}{1610583723485101}u_{3,3}^3-\frac{31218515091156436875}{6442334893940404}u_{3,3}, u_{3,3}^3+\frac{322885767353079225}{3221167446970202}u_{3,3}+\frac{103594358565795300}{1610583723485101}u_{3,1} \right\rangle$\\
$q_1^{-1}\left(1+ \frac{9}{2}\frac{q_2^2}{q_1^2}+\frac{27}{2}\frac{q_3^2}{q_1^2}+u_{3,0}\frac{q_2^3}{q_1^3}+u_{3,2}\frac{q_2q_3^2}{q_1^3}+
\left(\frac{3289}{12060}u_{3,0}^2+\frac{13311}{536}\right)\frac{q_2^4}{q_1^4}+ \left(\frac{42887}{2155950}u_{3,2}^2+\frac{47791}{143730}u_{3,0}u_{3,2}+\frac{376407}{1597}\right)\frac{q_2^2q_3^2}{q_1^4}+
\left(\frac{11446993}{324669060}u_{3,2}^2+\frac{376298379}{1603304} \right)\frac{q_3^4}{q_1^4}+\right.$\\
$\left. \left(\frac{207570596097655732674166455281418210645217993262286489869}{840193865795267874819893782623565474088703500063704000}u_{3,0}-
\frac{16787633864550203502743667842716175416793657678815071}{33607754631810714992795727304942618963548140002524160}u_{3,2}\right) \frac{q_2^5}{q_1^5}+
\left( \frac{4142079762228541965328315128069645141}{5646615134224665663290954120421260}u_{3,0}-\right.\right.$\\
$\left.\left. \frac{16869769031486182277210016047384863}{5646615134224665663290954120421260}u_{3,2} \right)  \frac{q_2^3q_3^2}{q_1^5}+
\left(\frac{2053379648674005395029029058639078197}{1389936033039917701733157937334464}u_{3,0}+\frac{121382613254803160782574481161900493}{6949680165199588508665789686672320}u_{3,2}\right)\frac{q_2q_3^4}{q_1^5}+ O\left( \frac{(q_1,q_2)^6}{q_1^6} \right)\right)$\\
$\mathcal{I}_2=\mathcal{I}_3=<\!u_{3,1},u_{3,3}\!>\quad \mathcal{I}_4=\left\langle\!u_{3,1},u_{3,3},
u_{3,0}^5-\frac{183692999458343787808210893960821881229602107524169}{20377836297273288008691439130795431294539450561232}u_{3,0}^3+.\frac{58718179609676651900557929966196287242373163213359375}{40755672594546596017382878261590862549078901122464}u_{3,0},\right.$\\ $\left.u_{3,0}^3-\frac{3554174443372635952956958960222053446124173871757419}{20377836297273288008691439130795431294539450561232}u_{3,0}+.\frac{135098452001995628889349592782686772373066739788025}{20377836297273288008691439130795431294539450561232}u_{3,2} \right\rangle$\\
$q_1^{-1}\left(1+ \frac{9}{2}\frac{q_2^2}{q_1^2}+\frac{35}{2}\frac{q_3^2}{q_1^2}+u_{3,0}\frac{q_2^3}{q_1^3}+u_{3,2}\frac{q_2q_3^2}{q_1^3}+u_{3,3}\frac{q_3^3}{q_1^3}+
\frac{3289}{12060}u_{3,0}^2\frac{q_2^4}{q_1^4}+ \left(\frac{299}{19730}u_{3,2}^2+\frac{6539}{19730}u_{3,0}u_{3,2}+\frac{1202985}{3946}\right)\frac{q_2^2q_3^2}{q_1^4}+ 
\frac{2171}{27090}u_{3,2}u_{3,3}\frac{q_2q_3^3}{q_1^4}+\left(\frac{1084733}{30333876}u_{3,2}^2+\frac{108970043}{1516693800}u_{3,3}^2+\right.\right.$\\
$\left.\left. \frac{8028474825}{20222584} \right)\frac{q_3^4}{q_1^4}+
\left(\frac{99171}{8576}u_{3,0}^3+\frac{363467}{4824000}u_{3,0}\right)\frac{q_2^5}{q_1^5}+ 
\left(\frac{12500754448143833461}{16013235835701805919400}u_{3,2}^3-\frac{106381137094520737}{9000423588396965634000}u_{3,3}^2u_{3,2}+ \frac{236392873}{2319952050} u_{3,0}^2u_{3,2}+\frac{157506321808073909150063}{17711147190979394229584}u_{3,2}+\right.\right.$\\
$\left.\left. \frac{203530243}{2667496}u_{3,0}\right)  \frac{q_2^3q_3^2}{q_1^5}+
\left(\frac{110817541821942436684566467}{1588606758916943651880510530000}u_{3,3}^3-\frac{385229250375178490966079}{12309652055185871405368}u_{3,3}  \right)\frac{q_2^2q_3^3}{q_1^5}+\left(\frac{2133042536605191925}{57080871628797312}u_{3,2}+\frac{6522159327691787}{1798047456307115328}u_{3,2}^3+\right.\right.$\\
$\left.\left. \frac{90416581343807626661}{12462124968549644724000}u_{3, 2}u_{3,3}^2  \right)\frac{q_2q_3^4}{q_1^5}+\left( \frac{179117095742904615655002021}{17581260547819220834716846}u_{3, 3}+\frac{45081551340734816448303196649189}{8334836094207397117218388767450000}u_{3, 3}^3 \right)\frac{q_3^5}{q_1^5}+ O\left( \frac{(q_1,q_2)^6}{q_1^6} \right)\right)$\\
$\mathcal{I}_2=\mathcal{I}_3=<\!u_{3,1}\!>\quad \mathcal{I}_4=\left\langle u_{3, 3}^3-\frac{269273638707190936992425}{6971035841270139539814}u_{3, 2}^2u_{3, 3}-\frac{19517430127889746138453959375}{60415643957674542678388}u_{3, 3}, \frac{2960366553926052591241012}{296036655392605259124101}u_{3, 0}u_{3, 3}^3-\frac{200153459825026311801301}{592073310785210518248202}u_{3, 2}u_{3, 3}^3-\right.$\\
$\left. \frac{956354076266597560784244009375}{296036655392605259124101}u_{3, 0}u_{3, 3}+\frac{163451037973980541410927748125}{296036655392605259124101}u_{3, 2}u_{3, 3}, -\frac{52777633186609423650515300}{46189259959621456569531}u_{3, 0}u_{3, 2}u_{3, 3}+u_{3, 3}^3-\frac{326902075947961082821855496250}{200153459825026311801301}u_{3, 3},\right.$\\
$\left. u_{3, 2}u_{3, 3}^2+\frac{769266973367262020}{815441353231641}u_{3, 0}u_{3, 2}^2-\frac{364191525409188890}{5164461903800393}u_{3, 2}^3+\frac{50935367483822301472125}{67138004749405109}u_{3, 2}\right\rangle$\\
$q_1^{-1}\left(1+ \frac{9}{2}\frac{q_2^2}{q_1^2}+\frac{54}{2}\frac{q_3^2}{q_1^2}+u_{3,0}\frac{q_2^3}{q_1^3}+u_{3,2}\frac{q_2q_3^2}{q_1^3}+u_{3,3}\frac{q_3^3}{q_1^3}+
\left(\frac{13311}{536}+\frac{3289}{12060}u_{3,0}^2\right)\frac{q_2^4}{q_1^4}+ \left(\frac{8131447}{843807240}u_{3,2}^2+\frac{1469946609}{3125212}+\frac{46560929}{140634540}u_{3, 0}u_{3,2}\right)\frac{q_2^2q_3^2}{q_1^4}+ 
\frac{26821}{525690}u_{3,2}u_{3,3}\frac{q_2q_3^3}{q_1^4}+\left(\frac{3843346549}{105721155720}u_{3, 2}^2+\right.\right.$\\
$\left.\left. \frac{93293112339}{97889959}+\frac{663499046129}{14245925733270}u_{3, 3}^2 \right)\frac{q_3^4}{q_1^4}+
\left(\frac{99171}{8576}u_{3, 0}+\frac{363467}{4824000}u_{3, 0}^3\right)\frac{q_2^5}{q_1^5}+ 
\left(\frac{118579147678287}{965637379396}u_{3, 0}+\frac{548369856819135481128554482500530673}{59393927254261565457430506022289096} u{3, 2}+\frac{21459422475171895871107095123965213}{70607855310476622219169250816049649200}u_{3, 2}^3+\right.\right.$\\
$\left.\left. \frac{1543078654120439}{14556983494394700}u_{3, 0}^2u_{3, 2}-\frac{65323338764275478512072478}{18007002218330647802772268712859}u_{3, 2}u_{3, 3}^2\right)  \frac{q_2^3q_3^2}{q_1^5}+\right.$\\
$\left. \left(-\frac{56399453236830854062047492240965325}{501489449847342564242033164136492}u_{3, 3}+\frac{1645058769784838360345278710391484}{4992954335042604405234742690433948475}u_{3, 3}^3  \right)\frac{q_2^2q_3^3}{q_1^5}+\left(\frac{6310841737996595281093353}{108326616514187004913604}u_{3, 2}+\frac{1027729317355996171389851}{438722796882457369900096200}u{3, 2}^3+\right.\right.$\\
$\left.\left. \frac{3015116344570439531602738498}{1009671449343745361987648367225}u_{3, 2}u_{3, 3}^2  \right)\frac{q_2q_3^4}{q_1^5}+\left( -\frac{4008495460000401399979252028072066166113121}{29916876551082911336452430097420793501516}u_{3, 3}+\frac{537412961592405139051640095845978898256260852721}{188802669023836001803818434124847208914422412210200}u_{3, 3}^3 \right)\frac{q_3^5}{q_1^5}+ O\left( \frac{(q_1,q_2)^6}{q_1^6} \right)\right)$\\
$\mathcal{I}_2=\mathcal{I}_3=<\!u_{3,1}\!>\quad \mathcal{I}_4=\left\langle u_{3, 3}^3-\frac{21405037493484133839598976518021}{4495112199619735814187970929088}u_{3, 2}^2u_{3, 3}-\frac{380181970518931304259885356994496125}{1123778049904933953546992732272}u_{3, 3},
\frac{4035092449451502815699543327165595075}{184299600184409168381706808092608}u_{3, 2}u_{3, 3}-\right.$\\
$\left. \frac{380181970518931304259885356994496125}{1123778049904933953546992732272}u_{3, 0}u_{3, 3}-\frac{51588237214057}{1157111566196588}u_{3, 2}u_{3, 3}^3+u{3, 0}u_{3, 3}^3,
-\frac{877606537232849487423558037238861}{8216745705879367033563172965712}u_{3, 0}u_{3, 2}u_{3, 3}+u_{3, 3}^3-\right.$\\
$\left.\frac{4035092449451502815699543327165595075}{8216745705879367033563172965712}u_{3, 3},
u_{3, 2}u_{3, 3}^2+\frac{58278294334415316709243773}{32748020504236080458960}u_{3, 0}u_{3, 2}^2-\frac{104659567580617123168215121}{1244424779160971057440480}u_{3, 2}^3+\frac{274668416871456936680905800555}{124442477916097105744048}u_{3, 2}\right\rangle$\\
\end{tiny}
\end{minipage}
\end{turn}
\end{center}

\begin{center}
\begin{turn}{-90}
\hspace{-0.5cm}
\begin{minipage}{22.5cm}
\begin{tiny}
$q_1^{-1}\left(1+ \frac{9}{2}\frac{q_2^2}{q_1^2}+\frac{44}{2}\frac{q_3^2}{q_1^2}+u_{3,0}\frac{q_2^3}{q_1^3}+u_{3,2}\frac{q_2q_3^2}{q_1^3}+
\left(\frac{3289}{12060}u_{3,0}^2+\frac{13311}{536}\right)\frac{q_2^4}{q_1^4}+ \left(\frac{48787}{4090760}u_{3,2}^2+\frac{2031899}{6136140}u_{3,0}u_{3,2}+\frac{78370281}{204538}\right)\frac{q_2^2q_3^2}{q_1^4}+
\left(\frac{4690489}{129913320}u_{3,2}^2+\frac{682538736}{1082611} \right)\frac{q_3^4}{q_1^4}+\right.$\\
$\left.\left(\frac{363467}{4824000}u_{3,0}^3+\frac{99171}{8576}u_{3,0}\right)\frac{q_2^5}{q_1^5}+
\left(\frac{442434627857883031737075841}{938912143429199537485438760400}u_{3,2}^3+\frac{8173166259659}{78016448675700}u_{3,0}^2u_{3,2}+ \frac{477961860375836325150938652440}{52422594674796974176270330789}u_{3,2}+\frac{85215635549}{862536746} u_{3,0}\right)  \frac{q_2^3q_3^2}{q_1^5}+\right.$\\
$\left.\left( \frac{28213191044570641817}{9796926716514400926600}u_{3,2}^3+\frac{1542788367059997027311}{32656422388381336422}u_{3,2} \right)\frac{q_2q_3^4}{q_1^5}+ O\left( \frac{(q_1,q_2)^6}{q_1^6} \right)\right)$\\
$\mathcal{I}_2=\mathcal{I}_3=<\!u_{3,1},u_{3,3}\!>\quad \mathcal{I}_4=\left\langle\!u_{3,1},u_{3,3},
u_{3,2}^3-\frac{1222814898902417148900}{70922785617530321}-\frac{3628491376486815158}{212768356852590963} u_{3,0}u_{3,2}^2\right\rangle$\\
$q_1^{-1}\left(1+ \frac{9}{2}\frac{q_2^2}{q_1^2}+\frac{20}{2}\frac{q_3^2}{q_1^2}+u_{3,0}\frac{q_2^3}{q_1^3}+u_{3,1}\frac{q_2^2q_3}{q_1^3}+u_{3,2}\frac{q_2q_3^2}{q_1^3}+u_{3,3}\frac{q_3^3}{q_1^3}+
\left(\frac{13311}{536}+\frac{3289}{12060}u_{3, 0}^2+\frac{11}{1809}u_{3, 1}^2\right)\frac{q_2^4}{q_1^4}+\left(\frac{44}{135}u_{3, 1}u_{3, 0}+\frac{11}{270}u_{3, 1}u_{3, 2}\right)\frac{q_2^3q_3}{q_1^4}+ \left(\frac{2959}{8772}u_{3, 0}u_{3, 2}+\frac{451}{2924}u_{3, 1}u_{3, 3}+\frac{11}{408}u_{3, 2}^2+\right.\right.$\\
$\left.\left. \frac{814}{6579}u_{3, 1}^2+\frac{258255}{1462}\right)\frac{q_2^2q_3^2}{q_1^4}+ 
\left( \frac{253}{2241}u_{3, 1}u_{3, 2}+\frac{121}{830}u_{3, 2}u_{3, 3}\right)\frac{q_2q_3^3}{q_1^4}+\left(\frac{2981}{86472}u_{3, 2}^2+\frac{153200}{1201}+\frac{105347}{840700}u_{3, 3}^2 \right)\frac{q_3^4}{q_1^4}+
\left(\frac{24121663415883}{456685207040}u_{3, 0}-\frac{36765303752462352960327967}{2196201151998323963013120}u_{3, 2}+\right.\right.$\\
$\left.\left. \frac{89405326300298824860496291}{17789229331186424100406272000}u_{3, 2}^3-\frac{41358147528355848356789}{822889690590546031104000}u_{3, 0}u_{3, 2}^2+\frac{1346853263812693}{13769058992256000}u_{3, 0}^2u_{3, 2}-\frac{922257609158044398539993}{45571173903965222232522240000}u_{3, 2}u_{3, 3}^2+\frac{363467}{4824000}u_{3, 0}^3+\frac{230463041841}{4566852070400}u_{3, 0}u_{3, 1}u_{3, 3}-\right.\right.$\\
$\left.\left. \frac{32105330815398760859953}{1517774318200340457369600}u_{3, 1}u_{3, 3}u_{3, 2}\right)\frac{q_2^5}{q_1^5}+ 
\left(\frac{1006362154607592365}{76272045092872776}u_{3, 1}+\frac{151684318285194825}{5403817053617488}u_{3, 3}-\frac{119025726972819079}{11265073504331898240}u_{3, 2}^2u_{3, 3}+\frac{23649118837130954071}{363298620514703718240}u_{3, 0}u_{3, 2}u_{3, 3}+\right.\right.$\\
$\left.\left.\frac{5798437746842933}{10723874942903904936}u_{3, 1}^2u_{3, 3}+\frac{3862054924584189703159}{85862492042850598854240}u_{3, 1}u_{3, 3}^2+\frac{2408236239104282863}{1367405224425023128128}u_{3, 1}u_{3, 2}^2+\frac{2556040771182469}{284638889347423632}u_{3, 1}u_{3, 0}u_{3, 2}\right)  \frac{q_2^4q_3}{q_1^5}+
\left(-\frac{7295524609850}{130123321413}u_{3, 1}+\frac{7533233096661}{105601052404}u_{3, 3}-\right.\right.$\\
$\left.\left. \frac{114781940299}{12230075371440}u_{3, 2}^2u_{3, 3}+\frac{204029796932797}{1314733102429800}u_{3, 0}u_{3, 2}u_{3, 3}-\frac{833363511404}{8316082876815}u_{3, 1}^2u_{3, 3}+\frac{2463793308929051}{26633641426812840}u_{3, 1}u_{3, 3}^2+\frac{46351753531771}{1166425453146132}u_{3, 1}u_{3, 2}^2-\frac{76656425825}{485605933866}u_{3, 1}u_{3, 0}u_{3, 2}\right)  \frac{q_2^3q_3^2}{q_1^5}+\right.$\\
$\left.\left(  -\frac{7295524609850}{130123321413}u_{3, 1}+\frac{7533233096661}{105601052404}u_{3, 3}-\frac{114781940299}{12230075371440}u_{3, 2}^2u_{3, 3}+\frac{204029796932797}{1314733102429800}u_{3, 0}u_{3, 2}u_{3, 3}-\frac{833363511404}{8316082876815}u_{3, 1}^2u_{3, 3}+\frac{2463793308929051}{26633641426812840}u_{3, 1}u_{3, 3}^2+\frac{46351753531771}{1166425453146132}u_{3, 1}u_{3, 2}^2-\right.\right.$\\
$\left.\left.\frac{76656425825}{485605933866}u_{3, 1}u_{3, 0}u_{3, 2} \right)\frac{q_2^2q_3^3}{q_1^5}+
\left(\frac{50525703076300}{161496415089}u_{3, 2}-\frac{4235472447613}{52324838488836}u_{3, 2}^3+\frac{3093278443}{4840858404}u_{3, 0}u_{3, 2}^2+\frac{4492719610495891}{187658834333418000}u_{3, 2}u_{3, 3}^2+\frac{13050298589}{34722700404}u_{3, 1}u_{3, 3}u_{3, 2} \right)\frac{q_2q_3^4}{q_1^5}+\right.$\\
$\left. \left( \frac{46809030}{1737847}u_{3, 3}+\frac{1362858557}{173089561200}u_{3, 2}^2u_{3, 3}+\frac{68250852673}{4257725150000}u_{3, 3}^3+\frac{133423}{51883632}u_{3, 1}u_{3, 2}^2 \right)\frac{q_3^5}{q_1^5}+ O\left( \frac{(q_1,q_2)^6}{q_1^6} \right)\right)$\\
$\mathcal{I}_2=\mathcal{I}_3=<\!0\!>\quad \mathcal{I}_4=\left\langle
\frac{21492255493447480629464816241}{96826350676115640555159100}u_{3,1}-\frac{161978738053543177347}{58408235543590696}u_{3,0}+\frac{3682769787616585454443049170788929}{4017640277688287994053545828000}u_{3,2}-\frac{153908497065685013278741413}{506763343612357343217200}u_{3,3}+\frac{83773728539}{2056568535750}u_{3,1}^3+\right.$\\
$\left.\frac{3602135236781333}{117779680042402500}u_{3,1}^2u_{3,2}-\frac{208416169135127627446573912789223}{756811308122677505856598167600000}u_{3,2}^3+\frac{7730882033}{41131370715}u_{3,0}u_{3,1}^2+\frac{4397878855122589301767627048843}{1505360636935661613092502600000}u_{3,0}u_{3,2}^2+\frac{111825024093141519961813257}{978171105112224639233200000}u_{3,2}^2u_{3,3}-\right.$\\
$\left.\frac{19202818906143244099}{2920411777179534800}u_{3,0}^2u_{3,2}+\frac{431284635777701402102430913273}{570219122859315840469296005000000}u_{3,2}u_{3,3}^2-\frac{7406139134798571607952467731}{10515339379956414871756900000}u_{3,0}u_{3,2}u_{3,3}-\frac{1981359607679726589}{584082355435906960}u_{3,0}u_{3,1}u_{3,3}+\right.$\\
$\left.\frac{16159160375636350412725999632347}{13882770318406657098519746200000}u_{3,1}u_{3,3}u_{3,2}+\frac{11541178916148758193157317}{36516770348537514437710000}u_{3,1}^2u_{3,3}-\frac{3628412075057407138506005535897}{7455629002060904322747250700000}u_{3,1}u_{3,3}^2-\frac{16997850549154609014088319603}{224276849473049251146368520000}u_{3,1}u_{3,2}^2+\right.$\\
$\left.\frac{209831610644740256848667017}{334578980271340473192265000}u_{3,1}u_{3,0}u_{3,2}+u_{3,1}u_{3,0}^2,
\frac{11023564040554427666539761429}{61968864432714009955301824}u_{3,1}-\frac{809893690267715886735}{467265884348725568}u_{3,0}+\frac{9500305040852292249125132917333559}{16456254577411227623643323711488}u_{3,2}-\right.$\\
$\left.\frac{91088276984695530979304697}{324328539911908699659008}u_{3,3}+\frac{12565459497}{438734620960}u_{3,1}^3+\frac{18240141713679571}{964851138907361280}u_{3,1}^2u_{3,2}-\frac{537628219940107140701558350542803}{3099899118070487063988626094489600}u_{3,2}^3+\frac{7730882033}{65810193144}u_{3,0}u_{3,1}^2+\frac{11317999279608608041128669156553}{6165957168888469967226890649600}u_{3,0}u_{3,2}^2+\right.$\\
$\left.\frac{66181848063068086388215533}{626029507271823769109248000}u_{3,2}^2u_{3,3}-\frac{19202818906143244099}{4672658843487255680}u_{3,0}^2u_{3,2}+\frac{1133842119622475310043992533923}{2335617527231757682562236436480000}u_{3,2}u_{3,3}^2-\frac{4383204733717948930008199839}{6729817203172105517924416000}u_{3,0}u_{3,2}u_{3,3}-\right.$\\
$\left.\frac{1981359607679726589}{934531768697451136}u_{3,0}u_{3,1}u_{3,3}+\frac{41676784231893787208363119215647}{56863827224193667475536880435200}u_{3,1}u_{3,3}u_{3,2}+\frac{100452525304217534839407441}{397302461392088157082284800}u_{3,1}^2u_{3,3}-\frac{2147417526703479037810755093693}{4771602561318978766558240448000}u_{3,1}u_{3,3}^2-\right.$\\
$\left.\frac{8840853261257999339095182607}{143537183662751520733675852800}u_{3,1}u_{3,2}^2+\frac{327022808578066208908275469}{642391642120973708529148800}u_{3,1}u_{3,0}u_{3,2}+u_{3,1}u_{3,0}^2, \frac{8119705174366086529811522577}{48413175338057820277579550}u_{3,1}-\frac{161978738053543177347}{116816471087181392}u_{3,0}+\right.$\\
$\left.\frac{14870337582300636982677825606451679}{32141122221506303952428366624000}u_{3,2}-\frac{69826208977705124409476661}{253381671806178671608600}u_{3,3}+\frac{8860106111}{342761422625}u_{3,1}^3+\frac{28427099781479791}{1884474880678440000}u_{3,1}^2u_{3,2}-\frac{841516266837011306783082446930723}{6054490464981420046852785340800000}u_{3,2}^3+\frac{7730882033}{82262741430}u_{3,0}u_{3,1}^2+\right.$\\
$\left.\frac{17705802748364933031308092640593}{12042885095485292904740020800000}u_{3,0}u_{3,2}^2+\frac{50733504972972215377690329}{489085552556112319616600000}u_{3,2}^2u_{3,3}-\frac{19202818906143244099}{5840823554359069600}u_{3,0}^2u_{3,2}+\frac{1782356720094574301990049414523}{4561752982874526723754368040000000}u_{3,2}u_{3,3}^2-\right.$\\
$\left.\frac{3360065420713577247167215107}{5257669689978207435878450000}u_{3,0}u_{3,2}u_{3,3}-\frac{1981359607679726589}{1168164710871813920}u_{3,0}u_{3,1}u_{3,3}+\frac{65231513945362190404335844984847}{111062162547253256788157969600000}u_{3,1}u_{3,3}u_{3,2}+\frac{73950500303148142428237033}{310392547962568872720535000}u_{3,1}^2u_{3,3}-\right.$\\
$\left. \frac{1646161613169798251144263874409}{3727814501030452161373625350000}u_{3,1}u_{3,3}^2-\frac{6539529995537929093658254091}{112138424736524625573184260000}u_{3,1}u_{3,2}^2+\frac{120905494589403709828247761}{250934235203505354894198750}u_{3,1}u_{3,0}u_{3,2}+u_{3,1}u_{3,0}^2, \frac{73712383302787538188228465761}{387305402704462562220636400}u_{3,1}-\right.$\\
$\left.\frac{485936214160629532041}{233632942174362784}u_{3,0}+\frac{177747758828436167967605762660940333}{257128977772050431619426932992000}u_{3,2}-\frac{582369803672098936042126773}{2027053374449429372868800}u_{3,3}+\frac{790993099357}{24678822429000}u_{3,1}^3+\frac{38119456437956873}{1675088782825280000}u_{3,1}^2u_{3,2}-\right.$\\
$\left.\frac{1117659694506847966268117264922169}{5381769302205706708313586969600000}u_{3,2}^3+\frac{7730882033}{54841827620}u_{3,0}u_{3,1}^2+\frac{23544244037889590806056576606779}{10704786751542482581991129600000}u_{3,0}u_{3,2}^2+\frac{423131396695772871698910297}{3912684420448898556932800000}u_{3,2}^2u_{3,3}-\frac{57608456718429732297}{11681647108718139200}u_{3,0}^2u_{3,2}+\right.$\\
$\left.\frac{21102189756216048155978964167121}{36494023862996213790034944320000000}u_{3,2}u_{3,3}^2-\frac{28023870521327646500034450051}{42061357519825659487027600000}u_{3,0}u_{3,2}u_{3,3}-\frac{5944078823039179767}{2336329421743627840}u_{3,0}u_{3,1}u_{3,3}+\frac{779803805130262341084870907540869}{888497300378026054305263756800000}u_{3,1}u_{3,3}u_{3,2}+\right.$\\
$\left.\frac{672102404038906146804563469}{2483140383700550981764280000}u_{3,1}^2u_{3,3}-\frac{13729440986554737069516573973737}{29822516008243617290989002800000}u_{3,1}u_{3,3}^2-\frac{176542672683304052381657155289}{2691322193676591013756422240000}u_{3,1}u_{3,2}^2+\frac{6532828008174407727920942563}{12044843289768257034921540000}u_{3,1}u_{3,0}u_{3,2}+u_{3,1}u_{3,0}^2\right\rangle$\\
\end{tiny}
\end{minipage}
\end{turn}
\end{center}

\label{}
\bibliographystyle{model1-num-names}
\bibliography{bibthese}

\end{document}